\newtheorem{thm}{Theorem}[section]
\newtheorem{cor}[thm]{Corollary}
\newtheorem{lem}[thm]{Lemma}
\newtheorem{prop}[thm]{Proposition}
\theoremstyle{definition}
\newtheorem{defn}[thm]{Definition}
\newtheorem{que}[thm]{Question}
\newtheorem{exe}[thm]{Example}
\newtheorem{rem}[thm]{Remark}
\numberwithin{equation}{section}
\newcommand{\N}{\mathbf{N}}
\newcommand{\Z}{\mathbf{Z}}
\newcommand{\R}{\mathbf{R}}
\newcommand{\C}{\mathbf{C}}
\newcommand{\Q}{\mathbf{Q}}
\newcommand{\SL}{\textnormal{SL}}
\newcommand{\PGL}{\textnormal{PGL}}
\newcommand{\PSL}{\textnormal{PSL}}
\newcommand{\Aut}{\mathrm{Aut}}
\newcommand{\Isom}{\text{Isom}}
\newcommand{\HHH}{\mathcal{H}}
\newcommand{\eps}{\varepsilon}
\newcommand{\tu}{\bigtriangleup}
\newcommand{\CPD}{\mathrm{CPD}}
\newcommand{\PC}{\mathrm{PC}}
\newcommand{\IET}{\mathrm{IET}}
\newcommand{\IETbw}{\mathrm{IET}^{\bowtie}}
\newcommand{\mks}{\mathfrak{S}}
\newcommand{\PB}{\mathcal{I}}
\newcommand{\ICO}{\mathcal{I}^{\mathrm{cof}}}
\newcommand{\ICOtop}{\mathcal{I}^{\mathrm{cof}}_{\mathrm{top}}}
\renewcommand{\thesubsection}{{\thesection.\Alph{subsection}}} 
\begin{document}
\title[Groups of piecewise continuous transformations]{Commensurating actions for groups of piecewise continuous transformations}
\author{Yves Cornulier}%
\address{CNRS and Univ Lyon, Univ Claude Bernard Lyon 1, Institut Camille Jordan, 43 blvd. du 11 novembre 1918, F-69622 Villeurbanne}
\email{cornulier@math.univ-lyon1.fr}
\subjclass[2010]{Primary 57S05, 57M50, 57M60; secondary 18B40, 20F65, 22F05, 37B99, 53C10, 53C15, 53C29, 57R30, 57S25, 57S30, 58H05}
\thanks{Supported by project ANR-14-CE25-0004 GAMME}





\date{October 15, 2021}

\begin{abstract}
We use partial actions, as formalized by Exel, to construct various commensurating actions. We use this in the context of groups piecewise preserving a geometric structure, and we interpret the transfixing property of these commensurating actions as the existence of a model for which the group acts preserving the geometric structure. We apply this to many groups with piecewise properties in dimension 1, notably piecewise of class $\mathcal{C}^k$, piecewise affine, piecewise projective (possibly discontinuous).

We derive various conjugacy results for subgroups with Property FW, or distorted cyclic subgroups, or more generally in the presence of rigidity properties for commensurating actions. For instance we obtain, under suitable assumptions, the conjugacy of a given piecewise affine action to an affine action on possibly another model. By the same method, we obtain a similar result in the projective case. An illustrating corollary is the fact that the group of piecewise projective self-transformations of the circle has no infinite subgroup with Kazhdan's Property~T; this corollary is new even in the piecewise affine case.

In addition, we use this to provide of the classification of circle subgroups of piecewise projective homeomorphisms of the projective line. The piecewise affine case is a classical result of Minakawa.
\end{abstract}

\maketitle

\section{Introduction}

The study of commensurating actions, as surveyed and systematized in \cite{CorFW}, is closely related to group actions on CAT(0) cube complexes, which have been developed in the last two decades and now plays a prominent role in geometric group theory.

In the present work, we use them to obtain rigidity results in certain groups such as the group $\PC_{\mathbf{Aff}}(\R/\Z)$ of piecewise affine self-transformations of the circle (allowing discontinuities). This group and various of its subgroups appear in many places, notably in connection to Thompson's groups. The piecewise orientation-preserving subgroup $\PC_{\mathbf{Aff}+}^0(\R/\Z)$ was explicitly defined by M.~Stein \cite{SteM}, and is sometimes referred to as group of affine interval exchanges. Its subgroup $\PC_{\mathbf{Aff}+}^0(\R/\Z)$ of self-homeomorphisms was earlier studied by Brin and Squier in \cite{BrS}.

We also consider the larger group $\PC_{\mathbf{Proj}}(\mathbf{P}^1_\R)$ of piecewise projective self-transformations of $\mathbf{P}^1_\R$. 
Its subgroup $\PC^0_{\mathbf{Proj}}(\mathbf{P}^1_\R)$ of continuous elements appeared in work of Strambach and Betten \cite{Str,Bet}, identifying it as automorphism group of a Moulton plane (an affine plane in the sense of incidence geometry, for which Desargues's theorem does not hold). A survey of these results can be found in \cite{Low}. Its derived series is computed in \cite{BW}.

Another classical occurrence of the group $\PC^0_{\mathrm{Proj}}(\mathbf{P}^1_\R)$ come from the fact that it includes a subgroup isomorphic to Thompson's group $T$, namely its subgroup for which breakpoints are in $\mathbf{P}^1_\Q$ and modeled over the pseudogroup of orientation-preserving integral projective transformations (i.e., piecewise $\PSL_2(\Z)$) \cite{Im}. The group $\PC^0_{\mathbf{Proj}+}(\mathbf{P}^1_\R)$ was further investigated by Greenberg, in relation to foliations.
He also introduced its class-$\mathcal{C}^1$ subgroup $\PC^1_{\mathbf{Proj}+}(\mathbf{P}^1_\R)$.

 A recent renewal of the interest on the piecewise projective self-homeo\-morphism groups comes from Monod's remarkable observation \cite{Mon} that the stabilizer of $\infty$ in the group $\PC^0_{\mathbf{Proj}+}(\mathbf{P}^1_\R)$ is non-amenable, yet has no nonabelian free subgroup.
Lodha and Moore \cite{LM} used this to produce explicit finitely presented subgroups with the same property.

In this paper, we obtain restrictions on subgroups of such groups. Results of this flavor appear for instance in work of Novak \cite{Nov}, showing that the group $\IET^+$ of interval exchanges (the subgroup of piecewise translations in $\PC_{\mathbf{Aff}}(\R/\Z)$) has no distorted cyclic subgroup. Also Dahmani, Fujiwara and Guirardel \cite[Theorem 6.1]{DFG} proved that $\IET^+$ has no infinite subgroup with Kazhdan's Property~T, a result improved in \cite[Theorem 4.1]{JS} (see also \cite[Theorem 7.1(2)]{Cor2}).

We now remind what Property FW is.

\begin{defn}
Given a group $G$ acting on a set $Y$, a subset $X$ is commensurated if, $\tu$ denoting the symmetric difference, $X\tu gX$ is finite for every $g\in G$. We call $(Y,X)$ a commensurating action. A stronger condition is being transfixed: this means that there exists a $G$-invariant subset $X_0$ such that $X\tu X_0$ is finite.
\end{defn}

Being transfixed is the ``obvious" reason for being commensurated and the richness of the theory comes from the failure of the converse. The simplest example of a non-transfixing commensurating action is the action of $\Z$ on itself by translation, commensurating $\N$.

\begin{defn}\label{def_fw}
A group $G$ has Property FW if every commensurating action of $G$ is transfixing. More generally, given a subgroup $H$ of $G$, the pair $(G,H)$ has relative Property FW if every commensurating action of $G$ is transfixing in restriction to $H$. 
\end{defn}

This is notably the case (see \cite{CorFW} for details and further examples)
\begin{itemize}
\item when $H$ has Property FW;
\item when $(G,H)$ has relative Property FH (in the sense that any isometric action of $G$ on a Hilbert space has an $H$-fixed point) --- for $G$ countable this is also known as relative Property~T;
\item when $H$ is cyclic and distorted in $G$;
\item when $H=\langle c\rangle$ is cyclic and unboundedly divisible in $G$ (in the sense that for every $m$ there exists $n\ge m$ and $\gamma\in G$ such that $\gamma^n=c$);
\end{itemize}

See Corollary \ref{exfw} for the last two items.
Property FW is established for various lattices in semisimple groups in \cite{Cor2}, including cases without Property T.

The following is an application of Theorems \ref{rel_fw_conj_aff}
and \ref{rel_fw_conj_pro} below.

\begin{cor}\label{c_nofw}
(a) (See Corollary \ref{affinekaz}) The group $\PC_{\mathbf{Aff}}(X)$ of piecewise affine self-transformations of $X=\R/\Z$ has no infinite subgroup with Property FW (and hence none with Kazhdan's Property T).

(b) (See Corollary \ref{notpro}) The group $\PC_{\mathbf{Proj}}(X)$ of piecewise projective self-transformations of $X=\R/\Z$ has no infinite subgroup with Property T.
\end{cor}

A short companion note \cite{CFW_short} has been written, isolating the proof of the corollary. As an addendum to (b), see Corollary \ref{psl2pro} for strong restrictions on possible subgroups with Property FW in the group of piecewise projective self-transformations of $\R/\Z$.

In \cite[Corollary 1.3]{LMT}, Lodha, Matte Bon and Triestino independently establish a particular case of Corollary \ref{c_nofw}(b): the subgroup $\PC_{\mathbf{Proj}}^0(X)=\PC_{\mathbf{Proj}}^0(X)\cap\mathrm{Homeo}(X)$ of continuous piecewise piecewise projective self-homeomorphisms of $X=\R/\Z$ has no infinite subgroup with Property~T. They make use of a commensurating action allowing to reduce to a theorem of Navas saying that the group of class-$\mathcal{C}^2$ diffeomorphisms of the circle has no infinite subgroup with Kazhdan's Property~T. They also provide an alternative more direct argument applying to the continuous piecewise affine case.

Our approach is based on using geometric structures, namely, in Corollary \ref{c_nofw}, affine structures and projective structures.

Let us start with the affine case. We define an affine curve as a Hausdorff topological space, locally modeled on open subsets of the affine line (with affine change of charts). For instance, $\R/\Z$ is naturally an affinely modeled curve, using (inverses of) homeomorphic restrictions of the projection $\R\to\R/\Z$ as charts. 

We say that an affinely modeled curve is finitely-charted if it has a finite covering by open intervals affinely isomorphic to intervals of $\R/\Z$. Beware that $\R$ itself is not finitely-charted, while every compact affinely modeled curve is finitely-charted, and every finitely-charted affinely modeled curve has finitely many components.

As a classical result of Kuiper (see Appendix \ref{s_projcur}), every connected, finitely-charted affinely modeled curve is isomorphic to:
\begin{itemize}
\item the interval $\mathopen] 0,1\mathclose[$,
\item the standard circle $\R/\Z$, or
\item a non-standard circle: $\R_{>0}/\langle h_\alpha\rangle$ for some unique $\alpha>1$, where $h_\alpha$ denotes the homothety $x\mapsto\alpha x$.
\end{itemize}

Given two affinely modeled curves $X$, $X'$, a piecewise affine isomorphism is by definition an affine isomorphism $X\smallsetminus F\to X'\smallsetminus F'$ between cofinite subsets, modulo identification of two such partial isomorphisms if they are equal on some cofinite subset. It is denoted $X\dashrightarrow X'$ (since it is not really a map). In particular, piecewise affine isomorphisms from $X$ to itself form the group $\PC_{\mathbf{Aff}}(X)$. A piecewise affine isomorphism as above induces by composition a group isomorphism $\PC_{\mathbf{Aff}}(X)\to \PC_{\mathbf{Aff}}(X')$.

Our approach roughly consists in the following. Let $G$ be a group and $H$ a subgroup such that $(G,H)$ has relative Property FW. Let $G\to\PC_{\mathbf{Aff}}(X)$ be a piecewise affine action, with $X=\R/\Z$. Then in restriction to $H$, after suitably ``modifying" $X$, the subgroup $H$ acts by affine automorphisms. ``Modifying" $X$ means removing adding finitely many points (and defining an affine structure at the added points).

\begin{thm}\label{rel_fw_conj_aff}
Let $H$ be a subgroup of $G=\PC_{\mathbf{Aff}}(X)$, with $X=\R/\Z$, such that $(G,H)$ has relative Property FW. Then there exists an affinely modeled curve $X'$ and a piecewise affine isomorphism $\varphi:X\dashrightarrow X'$ such that the induced embedding of $H$ into $\PC_{\mathbf{Aff}}(X')$ maps into $\Aut_{\mathbf{Aff}}(X')$.

If in addition $H$ is infinite and contained in $\PC_{\mathbf{Aff}}^0(X)$, then we can assume in addition that $\varphi$ is given by a piecewise affine homeomorphism $X\to X'$ (in particular, $X'$ is an affine circle, possibly non-standard). 
\end{thm}

For instance, if $H$ has Property FW, then the theorem applies, so that $H$ appears as a subgroup of $\Aut_{\mathbf{Aff}}(X')$, and the latter is a virtually abelian group, which forces $H$ to be finite, yielding Corollary \ref{c_nofw}(a).

Another important case is when $H$ is a distorted cyclic subgroup, in which case the conclusion can also be refined. We call ``standard curve" a finite union of copies of $\R/\Z$ and bounded intervals (this means a finitely-charted curve with no non-standard circle).

Recall that an element $g$ of a group $G$ is distorted if there exists a finite subset $S$ of $G$ such that $g\in\langle S\rangle$ and $\lim_n|g^n|_S/n=0$, where $|\cdot|_S$ denotes word length with respect to $S$. For instance elements of finite order are distorted. A cyclic subgroup $\langle g\rangle$ is by definition distorted in $G$ if $g$ has infinite order and is distorted (so finite order elements are distorted but finite cyclic subgroups are undistorted!).

\begin{cor}\label{distortedaff}
(a) Let $\sigma$ be a distorted element in $\PC_{\mathbf{Aff}}(X)$, with $X=\R/\Z$. Then there exists a standard curve $X'$ and a piecewise affine isomorphism $\varphi:X\dashrightarrow X'$ such that the image of $\sigma$ in $\PC_{\mathbf{Aff}}(X')$ belongs to $\Aut_{\mathbf{Aff}}(X')$. 

(b) If moreover $\sigma\in\PC^0_{\mathbf{Aff}}(\R/\Z)$ and has infinite order, then there exists $\varphi\in\PC^{0,+}_{\mathbf{Aff}}(\R/\Z)$ such that $\varphi\sigma\varphi^{-1}$ is an irrational rotation.
\end{cor}

Here, what is not covered by Theorem \ref{rel_fw_conj_aff} is that the regularization holds in a standard circle.
In (b), $+$ means that $\varphi$ is orientation-preserving.

Corollary \ref{distortedaff}(b) is close to a result recently obtained by Guelman and Liousse \cite{GuL} while this paper was in preparation: they obtain, by a different method, and assuming that $\sigma$ is distorted in $\PC^0_{\mathbf{Aff}}(\R/\Z)$, the conjugation for some power of $\sigma$.

Actually, it is unknown whether the group $\PC_{\mathbf{Aff}}(\R/\Z)$ admits a distorted cyclic subgroup; this question is originally due to Navas \cite[Cor.\ 1.10]{HL}. The question is also open for $\PC^0_{\mathbf{Aff}}(\R/\Z)$. See Corollary \ref{eqque} for some equivalent restatements of these questions.

Let us pass to the projective case. Say that a homeomorphism between open intervals of the projective line $\mathbf{P}^1_\R$ is projective (or homographic) if is the restriction of some homography (i.e., of some element of $\PGL_2(\R)$). A projectively modeled curve here means a Hausdorff topological space endowed with a system of charts in intervals of $\mathbf{P}^1_\R$, with projective changes of charts. It is finitely-charted if it has a finite cover by open intervals projectively isomorphic to intervals in $\mathbf{P}^1_\R$. As in the affine case, compact implies finitely-charted, which implies having finitely many connected components. The universal covering of $\mathbf{P}^1_\R$ is an example of a connected projectively modeled curve that is not finitely-charted.

The classification of connected projectively modeled curves is closely related to the classification of conjugacy classes in the universal covering of $\SL_2(\R)$. Actually, while appearing at many places, it is often written in a vague way, or even with an old chestnut mistake; see Appendix \ref{s_projcur}.

Here is the projective analogue to Theorem \ref{rel_fw_conj_aff}.

\begin{thm}\label{rel_fw_conj_pro}
Let $H$ be a subgroup of $G=\PC_{\mathbf{Proj}}(X)$, with $X=\R/\Z$, such that $(G,H)$ has relative Property FW. Then there exists a projectively modeled curve $X'$ and a piecewise projective isomorphism $\varphi:X\dashrightarrow X'$ such that the induced embedding of $H$ into $\PC_{\mathbf{Proj}}(X')$ maps into $\Aut_{\mathbf{Proj}}(X')$.

If in addition $H\subset\PC_{\mathbf{Proj}}^0(X)$, then we can assume in addition that there is an $H$-invariant finite subset $F$ such that $\varphi$ is given by a piecewise projective homeomorphism $X\smallsetminus F\to X'$. 
\end{thm}

Thus $H$ appears as a subgroup of $\Aut_{\mathbf{Proj}}(X')$, which virtually is the direct product of identity components automorphism groups $L_i$ of its connected components $X'_i$. Then for each $i$, the group $L_i$ is metabelian unless $X'_i$ is projectively isomorphic to a finite cover of $\mathbf{P}^1_\R$ (see the tables in \S\ref{ap_proj}, or \cite[Lemma 3.2]{CFW_short} for a direct argument), in which case $L_i$ is isomorphic to a finite cover of $\PSL_2(\R)$. This discards infinite subgroups with Property~T (Corollary \ref{c_nofw}(a)) and drastically restricts the possible structures for its subgroups with Property FW.

\begin{rem}
Although in a different context, let us mention another result concluding the existence of an invariant projective structure. Navas \cite[Proposition 2.1]{Nav} proves, for a group of $\mathcal{C}^3$-diffeomorphisms of the circle, assuming that it has no invariant probability measure and a certain technical condition, concludes that there is an invariant projective structure.
The technical condition roughly says that some cocycle, called Liouville cocycle, related to the distortion of cross-ratios, is a coboundary,  As in the current paper, he uses this proposition in combination with the knowledge of the automorphism group of projectively modeled curves, namely the fact that the orientation-preserving automorphism group is metabelian unless the projectively modeled curve is a finite covering of the projective line.
\end{rem}

\begin{rem}\label{aff_proj_stru}
Theorem \ref{rel_fw_conj_pro} notably applies to distorted elements (in this case we do not separately state a corollary). This result would be particularly complicated to state and prove without reference to projective structure, since the classification of those is significantly more elaborate than that of the affine case. Indeed, while classifying subgroups of $\PC^0_{\mathbf{Aff}}(\R/\Z)$ that are conjugate to $\mathrm{SO}(2)$ in $\mathrm{Homeo}(\R/\Z)$, Minakawa exhibits elements, which turn out to be those automorphisms of non-standard affinely modeled circles, conjugated into the standard circle by a suitable piecewise affine homeomorphism. A projective analogue would therefore require to exhibit, for every connected projectively modeled circle $X$, a piecewise projective homeomorphism $f$ onto $\R/\Z$ and in each case write down explicitly elements of $f\Aut_{\mathbf{Proj}}(X)f^{-1}$ as elements of $\PC^0_{\mathbf{Proj}}(\R/\Z)$.
\end{rem}

As a complement to Corollary \ref{c_nofw}(b), let us state:

\begin{cor}\label{cor_fwpcpj}
For every subgroup $\Gamma$ of $\PC_{\mathbf{Proj}}(\R/\Z)$ with Property FW, there exists $n\ge 0$, a finite index subgroup $\Gamma'$ of $\Gamma$ and a homomorphism $\Gamma'\to\mathbf{PSL}_2(\R)^n$ with finite kernel, such that each projection $\Gamma'\to\mathbf{PSL}_2(\R)$ has Zariski-dense image. In particular, $\Gamma$ is either finite or has a non-abelian free subgroup.
\end{cor}

\begin{exe}\label{exfwpp}
Using Theorem \ref{rel_fw_conj_pro}, one obtains that the group $\Gamma=\PSL_2(\Z[i,\sqrt{2}])$ (which does not have any infinite subgroup with Property~T) does not embed into $\PC_{\mathbf{Proj}}(\R/\Z)$. 
Indeed, to start with, it has Property FW: this uses bounded generation by elementary unipotent elements due to Carter-Keller \cite[Theorem 25.11]{Wi}. Given this, the easy argument to deduce Property FW is the same as the one \cite{CorFW} for $\PSL_2(\Z[\sqrt{2}])$. Then using Corollary \ref{rel_fw_conj_pro} and the classification of connected finitely-charted projectively modeled curves, one would deduce that some finite index subgroup $\Gamma'$ of $\Gamma$ embeds into $\PSL_2(\R)$.

The group $\Gamma$ (and hence $\Gamma'$) lies as an irreducible arithmetic lattice in $\PSL_2(\C)^2$. Hence, $\Gamma'$ has no homomorphism with infinite image into $\PSL_2(\R)$, by Margulis' superrigidity \cite{Mar}.
\end{exe}

We finish this introduction by repeating \cite[Question 1.19(2)]{Cor2}:

\begin{que}
Does there exist any infinite, finitely generated amenable group with Property FW?
\end{que}

I conjecture a positive answer.

\begin{rem}I owe to Nicol\'as Matte Bon the remark that the absence of infinite Property FW groups in $\IET$ (and in the piecewise affine group), established here, discards some tempting candidates for infinite finitely generated amenable groups with Property~FW. Indeed, every amenable group with Property FW has no homomorphism with infinite image into the piecewise projective group, by Corollary~\ref{cor_fwpcpj}.
\end{rem}

Let us now mention results about ``exotic circles". Start from a simple observation: every continuous faithful action of the topological group $\R/\Z$ on the topological space $\R/\Z$ is conjugate to the standard action by translation. Equivalently, all subgroups of $\mathrm{Homeo}(\R/\Z)$ that are topologically isomorphic to $\R/\Z$ are conjugate to the group of translations. Assuming now that the action preserves some structure, one can wonder about the classification by homeomorphism preserving this structure. This question was raised by Minakawa in the affine context, as already mentioned in Remark \ref{aff_proj_stru}. Reinterpreted using affinely modeled structures, Minakawa's result is that there is a natural bijective correspondence between the set of isomorphism class of affinely modeled circles and the set of subgroups of $\PC^0_{\mathbf{Aff}}(\R/\Z)$ that are topologically isomorphic to $\R/\Z$, modulo conjugacy in $\PC^0_{\mathbf{Aff}}(\R/\Z)$. This correspondence goes as follows: start from an affinely modeled circle $X$, choose a piecewise affine homeomorphism $\varphi:X\to\R/\Z$: then the corresponding subgroup is $f\Aut_{\mathbf{Aff}}(X)^\circ f^{-1}$, which up to conjugacy in $\PC^0_{\mathbf{Aff}}(\R/\Z)$ only depends on the isomorphism type of $X$ (which depends on a parameter in $\R_{\ge 1}$).

In the projective case, we establish a similar result. For it we need not only the classification of projectively modeled circles, but also of their 1-dimensional compact connected subgroups of automorphisms, which is fully described in Appendix \ref{s_projcur}. Only those homogeneous ones (i.e., with transitive automorphism group) are relevant here. For a homogeneous projectively modeled circle, the maximal connected compact subgroups of automorphisms are all conjugate (and indeed equal to the identity component of the automorphism group, with the exception of finite covers of $\mathbf{P}^1_\R$, see Proposition \ref{conjuo2}).

\begin{thm}\label{proj_minakawa}
There is a natural bijective correspondence between the set of homogeneous projectively modeled circles and the set of subgroups of $\PC^0_{\mathbf{Proj}}(\R/\Z)$ that are topologically isomorphic to $\R/\Z$ (for the topology induced by inclusion in $\mathrm{Homeo}(\R/\Z)$), modulo conjugacy in $\PC^0_{\mathbf{Proj}}(\R/\Z)$. This correspondence maps $X$ to the conjugacy class $fK_Xf^{-1}$, where $f$ is a piecewise projective homeomorphism $\varphi:X\to\R/\Z$, and $K_X$ is a maximal compact connected subgroup in $\Aut_{\mathbf{Proj}}(X)$.

The same result holds if $\PC^0_{\mathbf{Proj}}(\R/\Z)$ is replaced with its class $\mathcal{C}^1$ subgroup $\PC^1_{\mathbf{Proj}}(\R/\Z)$.
\end{thm}

The homogeneous projectively modeled circles form a proper subclass of the class of projectively modeled circles. Namely, this subclass consists of 
\begin{itemize}\item the affinely modeled circles (the standard circle $\Theta_1$ and the non-standard circle $\Theta_t=\R_{>0}/\langle t\rangle$ for $t>1$), and \item the metaelliptic curves $\Omega_t=\Sigma_\infty/\langle\xi_t\rangle$ for $t>0$. \end{itemize}
Here $\Sigma_\infty$ is the universal covering of $\mathbf{P}^1_\R$, and $(\xi_t)_{t\in\R}$ is the lift of the group of rotations $\mathrm{PSO}(2)=(\bar{\xi}_t)_{t\in\R/\Z}$; in particular $\Omega_n$ for $n\in\N_{>0}$ is the connected $n$-fold cover of $\mathbf{P}^1_\R$. See Theorem \ref{tsg} for a more detailed account.

The group $\PC^1_{\mathbf{Proj}}(X)$ of piecewise projective $\mathcal{C}^1$-diffeomorphisms was introduced by Greenberg \cite{Gre}. The question of classifying circle groups within this group is explicit in Sergiescu's notes \cite{Ser}; it is solved by the second statement of Theorem \ref{proj_minakawa}.

At a topological level, the only other connected Lie group transitive actions on the circle are given by the action of $\PSL_2(\R)$ on the projective line, and its finite coverings. These actions preserve a projective structure, by definition. Answering a question in \cite{LMT}, we show here there are no ``exotic" versions of such actions:

\begin{thm}[see Corollary \ref{lmtminakawa}]
Let $G$ be the image of a continuous and injective homomorphism from the $n$-fold covering $\PSL_2^{(n)}(\R)$ to $\mathrm{Homeo}(\R/\Z)$. Suppose that $G\subset\PC^0_{\mathbf{Proj}}(\R/\Z)$. Then $G$ is uniquely determined up to conjugation in $\PC^0_{\mathbf{Proj}}(\R/\Z)$. If moreover $G\subset\PC^1_{\mathbf{Proj}}(\R/\Z)$ then $G$ is uniquely determined up to conjugation in $\PC^1_{\mathbf{Proj}}(\R/\Z)$.
\end{thm}
 
\bigskip

\noindent{\bf Acknowledgements.} I am grateful to Misha Kapovich for suggesting me to study interval exchanges under the angle of commensurating actions. I thank \'Etienne Ghys for useful references about affine and projective 1-dimensional structures. I thank Thierry Bousch for a very useful remark (see \ref{doubpoints}). I am grateful to Michele Triestino for various corrections, and for bringing my attention to the question of exotic piecewise projective circles in \cite{Ser}, and to the authors of \cite{LMT} for sending me a copy of their preprint. I am also grateful to Nicol\'as Matte Bon for corrections, and for mentioning me that some topological restriction was unnecessary in my initial statement of Proposition \ref{semic}.

I thank the UNAM (CNRS Laboratory Solomon Lefschetz) in Mexico City for its hospitality during my stay from January to April of 2017, in which most of this work was undertaken.

\setcounter{tocdepth}{1}
\tableofcontents

\section{Method and results}\label{s_methods}

The purpose of this work is to show how the notion of partial group action, formalized by Exel \cite{E}, naturally fits into this context, and obtain various applications, notably to group actions on the circle ``piecewise" preserving some geometric structure, such as piecewise affine or piecewise projective actions.
The use of the formalism of partial actions punctually appeared in geometric group theory already, namely in B.\ Steinberg's work \cite{Ste}, but not in the context of commensurating actions.

\subsection{Partial actions}

For $X$ a topological space, let $\mathcal{I}_{\mathrm{top}}(X)$ be the set of partial homeomorphisms of $X$ between open subsets. Each element $f$ of $\mathcal{I}(X)$ is a homeomorphism $D_f\to D'_f$ with $D_f,D'_f$ open subsets of $X$. The open subset $D_f$ is called the domain of $f$. Then $\mathcal{I}(X)$ is a monoid, where $D_{g\circ f}$ is by definition $\{x\in D_f:f(x)\in D_g\}$. ,The inverse partial homeomorphism $D'_f\to D_f$ of $f$, denoted $f^{-1}$ is called preinverse of $f$ (rather than inverse, since $f^{-1}\circ f$ is not the identity, but the partial identity of $D_f$). Note that $D'_f=D_{f^{-1}}$. See \S\ref{s_ism} for details. For $f,g$, the notation $f\subset g$ means that the graph of $f$ is contained in the graph of $g$; that is, $g$ extends $f$. 

Following Exel \cite{E}, a continuous partial action of a (discrete) group $G$ on $X$ is a map $\alpha:G\to \mathcal{I}_{\mathrm{top}}(X)$ satisfying the axioms: $\alpha(1)=\mathrm{id}_X$, $\alpha(g^{-1})=\alpha(g)^{-1}$, and $\alpha(g)\alpha(h)\subset\alpha(gh)$ for all $g,h\in G$. We call $X$ a continuous partial $G$-space.

Every partial action has a natural orbit decomposition.

We say that a partial action is cofinite if it has cofinite domains: $X\smallsetminus D_{\alpha(g)}$ is finite for every $g\in G$.

Let $G$ acts continuously on a topological space $E$ and $X$ is an arbitrary open subset of $E$. We say that $X$ is $G$-essential in $E$ if every $G$-orbit meets $X$. In general, $G$ partially acts on $X$ by restriction. Namely, denoting by $\beta$ the action on $E$, and thinking $\beta(g)$ as its graph (a subset of $E^2$), one defines $\alpha(g)=X^2\cap\beta(g)$: this makes $X$ a continuous partial $G$-space.

It turns out that conversely, the $G$-space $E$ can be constructed out of the partial action on $X$, provided $X$ is $G$-essential in $E$. Moreover, every continuous partial action arises this way: every continuous partial $G$-space $X$ has an essentially unique ``universal globalization", namely a $G$-set $\hat{X}=\hat{X}_G$ containing $X$ as a $G$-essential open subset. The point of view of partial actions is nevertheless useful, because it often occurs that $X$ is a ``nice" object while $E$ is not (for instance, $X$ is Hausdorff but $E$ is not).

Let $X$ be a continuous partial $G$-space. Say that $X$ is $G$-transfixed above if $\hat{X}\smallsetminus X$ is finite. In general, $X$ is $G$-commensurated in $\hat{X}$ if and only if the partial $G$-action on $X$ is cofinite, and $G$-transfixed in $\hat{X}$ if and only if there exists a finite subset $F$ of $X$ such that the partial action of $G$ on $X$ is transfixed above.

A rich source of partial actions is given by the following:

\begin{prop}
[Proposition \ref{semic}]
Let $X$ be a Hausdorff topological space with no isolated point. Then every $f\in\PC(X)$ has a unique maximal representative, that is, a homeomorphism $\alpha(f):D_f\to D'_f$ with $D_f,D'_f$ cofinite representing $f$, that contains all other representatives of $f$. The assignment $f\mapsto\alpha(f)$ is a continuous partial action of $\PC(X)$ on $X$.
\end{prop}

On the one hand, for a compact manifold (possibly with boundary) without component of dimension $\le 1$, the canonical inclusion $\mathrm{Homeo}(X)\subset\PC(X)$ is an equality, see Remark \ref{r_homeo_pc}; in particular the universal globalization $\hat{X}$ of $X$ under the partial action of $\PC(X)$ is reduced to $X$. If $X=\R^n$ for $n\ge 2$, it follows that this universal globalization is reduced to the 1-point compactification $\R^n\cup\{\infty\}$. 

 On the other hand, for $X$ a circle, or a Cantor space, the group $\PC(X)$ is much larger than the homeomorphism group. In these cases, $\hat{X}$ is not Hausdorff, as it includes the compact space $X$ as a dense open proper subset. Also, since $\PC(X)$ acts on $\hat{X}$ and every orbit meets the open subset $X$, the space $\hat{X}$ is locally homeomorphic to $X$. In particular, for $X$ a circle, $\hat{X}$ is a connected 1-dimensional non-Hausdorff topological manifold.
  
Using geometric structures, we obtain below further naturally occurring partial actions.

\subsection{Pseudogroups}\label{si_pseudo}

We define a pseudogroup on a topological space as a set of partial homeomorphisms between open subsets with a suitable stability condition (see \S\ref{s_pseudogroup}). A pseudogroup $S$ on a (topological) space $A$ makes meaningful the notion, due to Ehresmann, of space modeled on $(A,S)$ (see \S\ref{s_pseudogroup} for details). For instance, a space modeled on the pseudogroup consisting of $\mathcal{C}^k$-diffeomorphisms between arbitrary subsets of $\R^n$ is the same as a purely $n$-dimensional manifold of class $\mathcal{C}^k$. If $X$ is a space modeled on $S$, the set of $S$-preserving homeomorphisms between open subsets of $X$ is itself a pseudogroup. For instance, from the pseudogroup of partial affine homeomorphisms on $\R$, the circle $\R/\Z$ inherits an affine structure for which the projection $\R/\Z$ is a local affine isomorphism. In turn, through the local homeomorphisms $\R/\Z\leftarrow\R\to\mathbf{P}^1_R$, the circle $\R/\Z$ inherits a projective structure. We refer to \S\ref{s_pseudogroup} for precise definitions.

Let $S$ be a pseudogroup on a space $A$ and $X$ is a Hausdorff $S$-modeled space. The parcelwise-$S$ group $\PC_S(X)$ is the subgroup of $X$ having a representative that is an $S$-preserving homeomorphism between open cofinite subsets. Then, if $X$ has no isolated point, then $\PC_S(X)$ also has a canonical $S$-preserving cofinite partial action on $X$, given by $g\mapsto\alpha_S(g)$, where $\alpha_S(g)$ is the maximal representative of $g$ that is an $S$-preserving homeomorphism between cofinite subsets of $X$ (see \S\ref{s_alphas} for details). Then the universal globalization $\hat{X}_G$ of $X$ with respect to every subgroup $G\subset\PC_S(X)$ is also an $S$-modeled space; still it can fail to be Hausdorff. 

\begin{thm}[Regularization theorem]\label{reg_t}
Let $A$ be a Hausdorff space with no isolated point and $S$ a pseudogroup on $A$. Let $X$ be an $S$-modeled space.

Let $H$ be a subgroup of $\PC_S(X)$ such that $X$ is transfixed under the partial $S$-preserving action of $H$. Then there exists a cofinite subset $Y$ of $X$ such that $\hat{Y}_H$ is Hausdorff. In particular, $\hat{Y}_H$ is an $S$-modeled Hausdorff space, and the identity map of $Y$ is an $S$-preserving homeomorphism between cofinite subsets of $X$ and $\hat{Y}_H$, intertwining the parcelwise-$S$ action on $X$ to an $S$-preserving action on $\hat{Y}_H$.
\end{thm}

Roughly, the conclusion says that after ``cutting-and-pasting" $X$ at finitely many points, the action becomes continuous and $S$-preserving.

The proof of this theorem is easy; the main contribution here was to gather the various features and write it down.

In the continuous case (i.e., $H\subset\PC^0_S$), one obtains a stronger conclusion.

\begin{thm}[Regularization theorem, continuous case]\label{regc_t}
Let $A$ be a Hausdorff space with no isolated point and $S$ a pseudogroup on $A$. Let $X$ be an $S$-modeled space.

Let $H$ be a subgroup of $\PC^0_S(X)$ such that $X$ is transfixed under the partial $S$-preserving action of $H$. Then there exists cofinite subsets $Y\subset Y'$ of $X$ with $Y'$ $H$-invariant, and an $H$-invariant $S$-modeled structure on $Y'$ that coincides with the original one on $Y$. In particular, if $H$ has no finite orbit on $X$, then $Y'=X$.
\end{thm}

Roughly, it means that after removing a finite invariant subset, one can ``repair" the $S$-structure at finitely many points so that the resulting $S$-structure is preserve (unlike in the discontinuous case, there is no need to change the topology). In the language of Theorem \ref{reg_t}, this means that $\hat{Y}_H$ identified by an parcelwise-$S$ $H$-equivariant homeomorphism to an $H$-invariant cofinite subset $Y'$ of $Y$.

Now let us discuss the transfixing assumption. Denoting by $D^S_g\subset X$ the domain of $g\in H$ and $\Delta^S_g$ its complement, $X$ is transfixed by $H$ if and only $\sup_{g\in H}|\Delta^S_g|<\infty$.

\begin{itemize}
\item when $S$ the pseudogroup of all partial homeomorphisms of $X$ (Hausdorff with no isolated point), $\Delta_g$ is the set of discontinuity points;
\item when $X=\R/\Z$ with the pseudogroup of isometries or oriented isometries, again $\Delta^S_g$ is the set of discontinuity points;
\item when $X=\R/\Z$ with the affine pseudogroup or its oriented analogue, $\Delta^S_g$ is the set of ``breakpoints", that is, discontinuity points as well as continuity points at which $g$ is not differentiable (or equivalently at which the left and right derivatives are not equal);
\item when $X=\R/\Z$ with the projective pseudogroup or its oriented analogue, $\Delta^S_g$ is the set of points at which $g$ is not of class $\mathcal{C}^2$.
\end{itemize}

For $S$ a pseudogroup on model space with no isolated point, and for $X$ a $S$-modeled Hausdorff space, the function $\PC_S(X)\to\N$, $g\mapsto |\Delta_g^S|$ has specific properties that are common to all such functions defined from cofinite partial actions. For instance, the behavior of such functions on cyclic subgroups is very restricted for every $g$ there exists $m_g^S\in\N$ such that $|\Delta_{g^n}^S|=m_g^Sn+b_g(n)$ for all $n\in\N$, for some bounded non-negatively valued function $b_g$. In particular, the limit $\lim_{n\to\infty}\frac{|\Delta_{g^n}^S|}{n}$ exists, is a non-negative integer, and is zero if and only if $\sup_k|\Delta_{g^n}^S|<\infty$.

Let us apply this to the $\mathcal{C}^k$-pseudogroup. Let $k\in\N=\{0,1,\dots\}$ be an integer. For $\sigma$ be a parcelwise-$\mathcal{C}^k$ self-transformation of $\R/\Z$ (or of an open interval), define $K_0(\sigma)$ as the subset of outer discontinuity points of $\sigma$ (that is, the set of $x$ such that $\sigma(x^+)\neq\sigma(x^-)$. For $x\notin K_0(\sigma)$, we can choose the value of $\sigma(x)$ as the one making $\sigma$ continuous at $x$. For $i\in\{1,\dots,k\}$ let $K_i(\sigma)$ be the subset of those $x\notin K_{i-1}(\sigma)$ at the neighborhood of which $\sigma$ is not of class $\mathcal{C}^i$. For $i\in\{0,\dots,k\}$, let $k_{i}(\sigma)$ be the cardinal of $K_{i}(\sigma)$ and $k_{\le i}(\sigma)=\sum_{j=0}^ik_j(\sigma)$. Note that $k_i(\sigma)=k_i(\sigma^{-1})$. 

\begin{cor}[See Corollary \ref{singro}]\label{countingsing}
For every $k\in\N$ and parcelwise-$\mathcal{C}^k$ self-transformation $\sigma$ of $\R/\Z$, there exist integers $0\le m_0\le \dots \le m_k$ and bounded non-negative even functions $b_i:\Z\to\N$ such that for all $i\in\{0,\dots,k\}$, we have $k_{\le i}(\sigma^n)=m_i|n|+b_i(n)$.

In particular, $k_{\le i}$ and $k_i$ have the property of growing either linearly or being bounded.
\end{cor}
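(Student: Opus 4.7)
The plan is to apply Lemma~\ref{mainle} (the Main Lemma) separately to the pseudogroup $\mathcal{C}^i$ for each $i\in\{0,\dots,k\}$ and then feed the output into Proposition~\ref{cardefz}. Since a parcelwise-$\mathcal{C}^k$ self-transforma\-tion $\sigma$ is \emph{a fortiori} parcelwise-$\mathcal{C}^i$, it defines an element of $\PC_{\mathcal{C}^i}(X)$, and the Main Lemma furnishes a cofinite-partial action $\alpha_i$ of $\PC_{\mathcal{C}^i}(X)$ on $X$. The crucial observation is that the domain $D_\sigma^{(i)}$ of $\sigma$ under $\alpha_i$ is precisely the set of points at which $\sigma$ admits a $\mathcal{C}^i$-extension on some neighbourhood, which by the very definition of $K_0(\sigma),K_1(\sigma),\dots,K_i(\sigma)$ equals $X\setminus K_{\le i}(\sigma)$. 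This extends the bulleted list given for $\mathbf{Iso}$, $\mathbf{Aff}$ and $\mathbf{Proj}$ right after the Main Lemma, and it yields the identity $k_{\le i}(\sigma)=|X\setminus D_\sigma^{(i)}|$.

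Restricting $\alpha_i$ to the cyclic subgroup $\langle\sigma\rangle$ and passing to the universal globalization $\hat X_i\supseteq X$, one obtains a genuine action of $\Z$ on $\hat X_i$ commensurating $X$, and in this picture $|X\setminus\sigma^{-n}X|=|X\setminus D_{\sigma^n}^{(i)}|=k_{\le i}(\sigma^n)$. Proposition~\ref{cardefz} now provides an integer $m_i\in\N$ such that $n\mapsto k_{\le i}(\sigma^n)-m_i n$ is bounded on $n\ge 0$; moreover, non-negativity of this remainder is built into the proof of the proposition, since $m_i$ can be identified with $\inf_{n\ge 1}k_{\le i}(\sigma^n)/n$ via subadditivity of $n\mapsto k_{\le i}(\sigma^n)$. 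Invoking the symmetry $k_j(\tau)=k_j(\tau^{-1})$ stated in the paper, applied to $\tau=\sigma^n$, gives $k_{\le i}(\sigma^n)=k_{\le i}(\sigma^{-n})$, so that
\[
b_i(n)\;:=\;k_{\le i}(\sigma^n)-m_i\,|n|
\]
is automatically bounded, non-negative and even on all of $\Z$.

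The monotonicity $m_0\le m_1\le\cdots\le m_k$ is then immediate from the chain of inclusions $K_{\le 0}(\sigma^n)\subseteq K_{\le 1}(\sigma^n)\subseteq\cdots\subseteq K_{\le k}(\sigma^n)$, which yields $k_{\le 0}(\sigma^n)\le\cdots\le k_{\le k}(\sigma^n)$ for every $n$; dividing by $|n|$ and letting $|n|\to\infty$ gives the desired ordering of the growth rates. The fact that $k_{\le i}$ and hence $k_i=k_{\le i}-k_{\le i-1}$ is either linearly growing or bounded is then a direct consequence.

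The only non-formal step is the first one, namely the identification of the domain provided by the Main Lemma with the concrete finite set $X\setminus K_{\le i}(\sigma)$; once that pointwise local verification is accepted, the corollary reduces to a direct application of Proposition~\ref{cardefz} combined with the elementary symmetry $k_i(\sigma)=k_i(\sigma^{-1})$. No piecewise-specific combinatorial estimate is needed: the linear-versus-bounded dichotomy is entirely imported from the abstract commensurating-action theory via the partial action $\alpha_i$.
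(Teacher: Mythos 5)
Your proposal is correct and follows essentially the same route as the paper: the paper derives this statement from Corollary \ref{semiprod2} applied to each pseudogroup $\mathcal{C}^i$, which is exactly your combination of the Main Lemma (giving the partial action $\alpha_{\mathcal{C}^i}$ whose indeterminacy set is $K_{\le i}(\sigma)$, as in Example \ref{exfirstco}), Proposition \ref{cardefz} for the linear-plus-bounded form with non-negative remainder, and the symmetry $k_{\le i}(\sigma)=k_{\le i}(\sigma^{-1})$ for evenness. Your added justifications (the identification $m_i=\inf_n k_{\le i}(\sigma^n)/n$ via subadditivity, and the monotonicity of the $m_i$ from the inclusions $K_{\le i}\subseteq K_{\le i+1}$) are consistent with, and slightly more explicit than, what the paper records.
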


This lets us retrieve or improve some known results in a unified way. Applied when $k=0$ and in the case of interval exchanges, this concerns the discontinuity growth for interval exchanges and this counting result was obtained in \cite{Nov,DFG}. When $k=1$, in the case of piecewise affine self-transformations, it was obtained by Guelman and Liousse \cite[\S 4]{GuL} that the sequences $(k_1(\sigma^n))_{n\ge 0}$ (and $(k_0(\sigma^n))_{n\ge 0}$) are either bounded of have linear growth. See also Corollary \ref{Ckconju} for a specification of Theorem \ref{reg_t} to this context.

\section{Preliminary definitions}

\subsection{Inverse symmetric monoids}\label{s_ism}

A semigroup is a set endowed with an associative binary law. A monoid is a semigroup endowed with a unit element (which is then unique). In a semigroup, a preinverse of an element $x$ is an element $y$ such that $xyx=x$ and $yxy=y$. A semigroup (resp.\ monoid) is called an inverse semigroup (resp.\ inverse monoid) if every element $x$ has a unique preinverse, then denoted $x^{-1}$. Homomorphisms of monoids (resp.\ inverse semigroups, resp.\ inverse monoids) are required to map unit to unit (resp.\ preinverse to preinverse, resp.\ both). In inverse semigroup theory, preinverses are often called ``inverses" but we rather use the more usual terminology of inverses in monoids (an inverse for $x$ is $y$ such that $yx=xy=1$; such $y$ is unique and is then a preinverse of $x$).

Given sets $X,Y$, the set $\mathcal{P}(X\times Y)$ of subsets of $X\times Y$ (the set of binary relations on $X,Y$) is endowed with its usual composition: given $A\subset X\times Y$ and $B\subset Y\times Z$, 
\[A\circ B=\{(x,z)\in X\times Z:\exists y\in Y:(x,y)\in A,(y,z)\in B\}.\]
In particular, this makes $\mathcal{P}(X^2)$ a monoid (the diagonal $\mathrm{id}_X$ being the unit element). For $u\in\mathcal{P}(X\times Y)$, define $D_u$ and $D'_u$ as its projections on $X$ and $Y$, called its domain and range. Denote by $s:A\mapsto A^{-1}$ the involution $\mathcal{P}(X\times Y)\to\mathcal{P}(Y\times X)$ induced by $(x,y)\mapsto (y,x)$. Beware that it is not an inverse map on $\mathcal{P}(X^2)$ as soon as $X$ is nonempty, and not a preinverse map as soon as $X$ contains two distinct elements. 

Let $\mathcal{I}(X,Y)$ be the set of subsets of $X^2$ both of whose projections are injective. These are called partial bijections of $X$, namely each $u\in\mathcal{I}(X)$ is a bijection between its domain $D_u$ and its range $D'_u$. These are stable under composition. In particular, $\PB(X)$ is a well-known submonoid of $X^2$, called inverse symmetric monoid of $X$. Indeed, this is an inverse monoid: the preinverse of $u$ being $u^{-1}$. 

Let $\ICO(X)$ be the set of partial bijections of $\sigma\in\mathcal{I}(X)$ with cofinite domain and range. This is an inverse submonoid of $\PB(X)$.

Given topological spaces $A$, $B$, let $\mathcal{I}_{\mathrm{top}}(A,B)$ be the subset of $\PB(A,B)$ consisting of those $u$ such that both $D_u$ and $D'_u$ are open subsets of $A$, and such that $u$ induces a homeomorphism $D_u\to D'_u$. In particular, $\PB_{\mathrm{top}}(A)=\PB_{\mathrm{top}}(A,A)$ is an inverse submonoid of $\PB(A)$. We write $\PB_{\mathrm{top}}^{\mathrm{cof}}(A,B)=\PB_{\mathrm{top}}(A,B)\cap\PB^{\mathrm{cof}}(A,B)$ and 
$\PB_{\mathrm{top}}^{\mathrm{cof}}(A)=\PB_{\mathrm{top}}^{\mathrm{cof}}(A,A)$.

\subsection{Partial actions}
\subsubsection{Definition}
\begin{defn}[Exel \cite{E}]
A partial action of a group $G$ on a set $X$ is a map $\alpha:G\to\PB(X)$ satisfying:
\begin{enumerate}
\item $\alpha(1)=\mathrm{id}_X$;
\item $\alpha(g^{-1})=\alpha(g)^{-1}$ for all $g\in G$;
\item for all $g,h\in G$, $\alpha(g)\circ\alpha(h)\subset \alpha(gh)$.
\end{enumerate}
We say that $(X,\alpha)$ is a partial $G$-set (and if $\alpha(G)\subset\mks(X)$, the group of permutations of $X$, we say that it is a global $G$-set, or just $G$-set).
\end{defn}

When $\alpha$ is valued in $\ICO(X)$, it is called a cofinite-partial action.

Given a partial action of $G$ on $X$, a topology $\mathcal{T}$ on $X$ is said to be preserved by the partial action if for every $g\in G$, we have $\alpha(g)\in\mathcal{I}_{\mathrm{top}}(X_\mathcal{T})$. Here $X_\mathcal{T}$ means $X$ endowed with the topology $\mathcal{T}$ (mostly the topology is implicit and is omitted from the notation). When $X$ is endowed with $\mathcal{T}$, we call it a topological partial action of $G$ on $X$; we call $X$ a partial topological $G$-space, and (global) topological $G$-space when the underlying partial action is an action.

A homomorphism between partial $G$-sets $(X,\alpha)$, $(Y,\beta)$ is a map $f:X\to Y$ that is $G$-equivariant, in the sense that $\alpha(g)\subset (f\times f)^{-1}(\beta(g))$ for all $g\in G$. That $f$ is $G$-equivariant just means that for every $g\in G$ every $x\in X$ such that $\alpha(g)x$ is defined, then $\beta(g)f(x)$ is also defined and $f(\alpha(g)x)=\beta(g)f(x)$.

We say that such a homomorphism $f$ is full if the above inclusion is an equality: $\alpha(g)= (f\times f)^{-1}(\beta(g))$ for all $g\in G$. We then say that $f$ is fully $G$-equivariant.

The homomorphism $f$ is essential if for every $y\in Y$ there exists $g\in G$ such that $\beta(g)y$ is defined and belongs to $f(X)$.

A bijection $(X,\alpha)\to (Y,\beta)$ is called $G$-biequiv\-ariant if both $f$ and $f^{-1}$ are $G$-equivariant. For a bijective homomorphism, this means that $f$ is fully $G$-equivariant. 

Beware that a bijective $G$-equivariant map can fail to be $G$-biequiv\-ariant, unlike in the setting of global actions. For instance, $\beta$ could be an action but not $\alpha$ (see Proposition \ref{glo_tran} for a result in this context).

\subsubsection{Universal globalization}\label{uniglo}
Given a partial $G$-set $(X,\alpha)$, partial $G$-sets $(Y,\beta)$ endowed with a homomorphism $(X,\alpha)\to (Y,\beta)$ form a category, whose maps are the $G$-equivariant maps so that the obvious triangle commutes. It has a full subcategory, consisting of those (global) $G$-sets $(Y,\beta)$ endowed with a homomorphism $(X,\alpha)\to (Y,\beta)$. An initial element in this category is called a universal globalization for $(X,\alpha)$.

\begin{thm}[Megrelishvili \cite{Me1,Me2}, Abadie \cite{AT,A}, Kellendonk-Lawson \cite{KL}]\label{tunivglo}
Every partial $G$-set $(X,\alpha)$ admits a universal globalization $\iota:(X,\alpha)\to(\hat{X},\hat{\alpha})$. Moreover
\begin{enumerate}
\item the map $\iota$ is injective;
\item every $\hat{\alpha}(G)$-orbit in $\hat{X}$ meets $\iota(X)$ (that is, $\iota$ is essential).
\end{enumerate}
\end{thm}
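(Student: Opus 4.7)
The plan is to construct $\hat{X}$ explicitly as a quotient of $G\times X$ by a suitable equivalence relation, a construction that is standard once one guesses it. On $G\times X$, put the relation
\[(g,x)\sim (h,y)\ \Longleftrightarrow\ \alpha(h^{-1}g)(x)\text{ is defined and equals }y.\]
First I would check this is an equivalence relation, each axiom of a partial action contributing exactly once: reflexivity uses axiom (1), symmetry uses axiom (2), and transitivity uses axiom (3). This last point is the spot that requires a little care, because axiom (3) is stated as an inclusion, not an equality. Concretely, if $\alpha(h^{-1}g)(x)=y$ and $\alpha(k^{-1}h)(y)=z$, then $(x,z)$ lies in $\alpha(k^{-1}h)\circ\alpha(h^{-1}g)\subset\alpha(k^{-1}g)$, so $\alpha(k^{-1}g)(x)=z$. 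This is the one genuine obstacle; everything else is essentially bookkeeping.

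Next, set $\hat{X}=(G\times X)/\sim$ and write $[g,x]$ for the class. The left-translation action of $G$ on $G\times X$ given by $k\cdot (g,x)=(kg,x)$ respects $\sim$ (the relation only sees $h^{-1}g$) and so descends to a genuine $G$-action $\hat\alpha$ on $\hat{X}$. Define $\iota:X\to\hat{X}$ by $\iota(x)=[1,x]$. I would check that $\iota$ is a homomorphism of partial $G$-sets: if $\alpha(g)(x)=y$, then by definition $(g,x)\sim (1,y)$, so $\hat\alpha(g)\iota(x)=[g,x]=[1,y]=\iota(y)$.

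For the two numbered points: injectivity of $\iota$ follows from axiom (1), since $[1,x]=[1,y]$ means $\alpha(1)(x)=y$, i.e.\ $x=y$. Essentiality follows from the identity $\hat\alpha(g^{-1})[g,x]=[1,x]=\iota(x)$, so every $\hat\alpha(G)$-orbit meets $\iota(X)$.

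Finally, the universal property. Given any global $G$-set $(Y,\beta)$ and a homomorphism of partial $G$-sets $f:X\to Y$, I would define
\[\hat{f}:\hat{X}\to Y,\qquad \hat{f}([g,x])=\beta(g)f(x).\]
Well-definedness on classes reduces, by the definition of $\sim$, to the $G$-equivariance of $f$: if $\alpha(h^{-1}g)(x)=y$ then $\beta(h^{-1}g)f(x)=f(y)$, hence $\beta(g)f(x)=\beta(h)f(y)$. The map $\hat{f}$ is then $G$-equivariant by construction and satisfies $\hat{f}\circ\iota=f$. Uniqueness is immediate: any $G$-equivariant extension of $f$ must send $[g,x]=\hat\alpha(g)\iota(x)$ to $\beta(g)f(x)$. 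The only subtle step in the whole argument is transitivity of $\sim$, as noted above; the rest is a direct verification left to the diligent reader.
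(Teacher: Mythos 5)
Your construction is exactly the one the paper recalls (the quotient of $G\times X$ by $(h,x)\sim(k,y)$ iff $\alpha(k^{-1}h)(x)=y$, with the left-translation action), and your verifications — in particular the use of axiom (3) for transitivity and of the equivariance of $f$ for well-definedness of $\hat f$ — correctly carry out the check that the paper explicitly leaves as an exercise. The proposal is correct and takes essentially the same approach as the paper.
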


In detail, the first assertion means that there is a set $\hat{X}$, a map $\iota:X\to\hat{X}$, a $G$-action $\beta$ on $X$ such that $\iota$ is $G$-equivariant and such that for every other $G$-set $Y$ and $G$-equivariant map $f=X\to Y$, there exists a unique $G$-equivariant map $u_f:\hat{X}\to Y$ such that $f=u_f\circ\iota$.

Let us recall the simple construction. Denote by $D_g\subset X$ the domain of $\alpha(g)$. Start from $\tilde{X}=G\times X$ and the map $X\to\tilde{X}$ given by $x\mapsto (1,x)$. Endow $\tilde{X}$ with the $G$-action given by $g\cdot (h,x)=(gh,x)$. Define $\hat{X}$ by modding out by the equivalence relation $\sim$ given by $(h,x)\sim (k,y)$ if $x\in D_{k^{-1}h}$ and $\alpha(k^{-1}h)(x)=y$. The $G$-action passes to the quotient $\hat{X}$ and the resulting map $X\to\hat{X}$ is a universal globalization. This can be checked as an exercise: indeed, the virtue of the theorem is, first and foremost, to have been formulated.

The following easy lemma is a convenient way to recognize a universal globalization.

\begin{lem}[{\cite[Prop.\ 3.5]{KL}}]\label{reglo}
Any full, essential and injective homomorphism from a partial $G$-action to a global $G$-action is a universal globalization \cite[Prop.\ 3.5]{KL}.\qed
\end{lem}

\begin{rem}\label{tunivr}
Megrilishvili \cite{Me1,Me2} used a less general, related formalism of ``preactions" and then constructed a ``universal action" with the same construction as the one used for the universal globalization in \cite{AT,A,KL} in the framework of partial actions.
Abadie's result was written in 1999 in his PhD \cite{AT} and published only in 2003 \cite{A}. The construction of the universal globalization was independently later obtained, using the same construction, by Kellendonk and Lawson \cite{KL} (published in 2004, but quoted as a preprint in \cite{Me2} published in 2000).
\end{rem}

\begin{thm}[Abadie \cite{AT,A,KL}]\label{tunivglot}
Let $(X,\alpha)$ be a partial $G$-set and $\iota:(X,\alpha)\to(\hat{X},\hat{\alpha})$ a universal globalization. For every topology $\mathcal{T}$ on $X$ preserved by $\alpha(G)$, there is a unique topology $\hat{\mathcal{T}}$ on $\hat{X}$ preserved by $\hat{\alpha}(G)$ such that $\iota$ is an open continuous map. Moreover, $(\hat{X},\hat{\alpha})$ is an initial element in the category of topological $G$-sets (topological spaces endowed with a $G$-action by self-homeomorphisms) endowed with a continuous homomorphism of partial actions from $(X,\alpha)$.
\end{thm}

Actually, Abadie's result is directly stated while defining the globalization in the context of actions on topological spaces. But the underlying action only depends on the underlying partial action, so we have found convenient to restate it as above, which is closer to the subsequent formulation from \cite[\S 3.2]{KL}.

Note that in partial actions, we always consider the acting group as a discrete group, although Abadie \cite{A} more generally addresses partial actions of topological groups.

Let us mention for future reference
\begin{lem}\label{glo_t1}
If $X$ is $T_1$ (that is, singletons are closed), so is $\hat{X}$.
\end{lem}
\begin{proof}
Describe $\hat{X}$ as above as quotient of $G\times X$ by an equivalence relation. Then the equivalence class of $(g,x)$ meets each layer $\{g'\}\times X$ in at most a singleton. Hence if $X$ is $T_1$, equivalence classes are closed, so that the quotient topological space is $T_1$.
\end{proof}

\subsection{Encoding commensurating actions as partial actions and vice versa}\label{encocp}

Given a partial action $\alpha$ of $G$ on a set $X$, let us define $\ell^-_X(g)$ as the cardinal of the complement $X\smallsetminus D_{\alpha(g)}$; define $\ell^+_X(g)=\ell^-_X(g^{-1})$. Call $\ell^+_X$ and $\ell^-_X$ the loanshark and prodigal semi-index functions of the partial $G$-set $X$. This can actually be interpreted in previous setting: indeed, consider a universal globalization $X\to Y$. Then $\ell^-_X(g)$ coincides with its definition as in \S\ref{s_coma}, in the setting of commensurating actions.

We say that the partial action commensurates $X$ if $\ell^+_X$ (or equivalently $\ell^-_X$) takes finite values. For a partial action, the condition of commensurating $X$ is equivalent to be a cofinite-partial action. This shows that all the theory of commensurating actions translates in the setting of cofinite-partial actions. We say that it transfixes $X$ if there exists a finite subset $F$ of $X$ such that the induced partial action of $G$ on $X\smallsetminus F$ has finite complement in its universal globalization. Equivalently, this means that $X$ is transfixed in its universal globalization.  In particular, given a subgroup $H$ of $G$, relative Property FW of $(G,H)$ can be restated as: every cofinite-partial action of $G$ is transfixing in restriction to $H$.
Moreover, by the dictionary between cofinite-partial actions and commensurating actions, Theorem \ref{t_transf} can be restated as:

\begin{thm}
A partial action of a group $G$ on a set $X$ is transfixing if and only if $\ell^+_X$ (or equivalently $\ell^-_X$) has a finite supremum on $G$.\qed
\end{thm}

\subsection{Parcelwise group $\PC(X)$}\label{s_parcelwise}
Let $X$ be a topological space. Recall from \S\ref{s_ism} that $\ICOtop(X)$ is the inverse submonoid of $\ICO(X)$ consisting of partial homeomorphisms between two open cofinite subsets of $X$.

Identifying two elements in $\ICOtop(X)$ whenever they coincide outside a finite subset, we obtain a group, which we denote by $\PC(X)$, and call it the group of parcelwise continuous self-transformations of $X$.

\subsection{Pseudogroups, modeled structures, piecewise and parcelwise monoids}\label{s_pseudogroup}
\subsubsection{Definition}
 We define a pseudogroup on a topological space $A$ as an inverse submonoid $S$ of $\mathcal{I}(A)$ such that $\{U:\mathrm{id}_U\in S\}$ is a basis of the topology. Note that the latter condition implies that the topology is determined by $S$.
  
The completed pseudogroup $\hat{S}$ consists of the set of elements of $u\in\PB(A)$ that can be written, for some index set $I$, as a union $\bigcup_{i\in I}u_i$ with $u_i\in S$ for all $i$. When $S=\hat{S}$, the pseudogroup is called complete.

In the literature, pseudogroups are often defined as what is called here ``complete pseudogroup", and furthermore are also often supposed to contain $\mathrm{id}_U$ for every open subset $U$. We find convenient here to get rid of these restrictions.

\begin{exe}
Let $G$ be subgroup of the self-homeomorphism group of $A$. Then the set of restrictions of elements of $G$ to open subsets is a pseudogroup on $A$. It is usually not complete: for instance, if $G$ consists of translations on $A=\R$, it is not complete, since the completed pseudogroup contains local translations (with non-connected domain) that are not translations.
\end{exe}

For instance, if $G$ is a subgroup of the group of self-homeomorphisms of $X$, we can define the pseudogroup induced by $G$, starting from those restrictions of elements of $G$ to open subsets.

\subsubsection{$S$-structures, $S$-modeled topological spaces}

Let $S$ be a pseudogroup on a topological space $A$. 

Let $X$ be a topological space. An $S$-atlas on $X$ is a subset $\HHH$ of $\PB(X,A)$, whose elements are called charts, such that $\emptyset\in\HHH$ and such that for any $f,g\in\HHH$, we have $g\circ f^{-1}\in \hat{S}$.

An $S$-atlas is called complete if it satisfies the following two additional conditions:
\begin{itemize}
\item for every $f\in\HHH$ and $u\in S$, we have $u\circ f\in\HHH$;
\item for any index set $I$ and family $(g_i)_{i\in I}$ in $\HHH$ such that $g:=\bigcup_{i\in I}g_i$ belongs to $\PB(X,A)$, we have $g\in\HHH$.
\end{itemize}

The data of a complete $S$-atlas $\HHH$ on $X$ is called an $S$-structure on $X$. A topological space endowed with an $S$-structure is called an $S$-modeled topological space. We call it finitely-charted if $X$ has a finite covering by domains of charts.

Actually, every $S$-atlas on $X$ endows $X$ with an $S$-structure: indeed every $S$-atlas generates a complete $S$-atlas in a natural way.

\subsubsection{Parcelwise-$S$ inverse monoid and group}\label{parcelws}

 Now consider an $S$-modeled topological space $X$, with complete atlas $\HHH=\PB_S(X,A)$. 
 
We then denote by $\PB_S(X)$ the set of elements $h\in\PB(X)$ that can be written, for some index set $I$ and families $(f_i)_{i\in I}$ and $(g_i)_{i\in I}$ in $\HHH$, as $h=\bigcup_{i\in I}g_i^{-1}\circ f_i$.

Then $\PB_S(X)$ is an inverse submonoid of $\PB(X)$ (and thus a pseudogroup), and $\PB_S(X,A)$ is stable under precomposition with $\PB_S(X)$ and postcomposition with $\hat{S}=\PB_S(A)$. By composition, any topological space endowed with a $\PB_S(X)$-structure inherits a canonical $S$-structure.

Consider the inverse submonoid $\ICO_S(X)=\PB_S(X)\cap\ICOtop(X_\delta)$, that is, the set of elements in $\PB_S(X)$ with cofinite domain and range. We call it the parcelwise-$S$ inverse monoid of $X$.
Identifying two elements of $\ICO_S(X)$ whenever they coincide on a cofinite subset, we obtain a subgroup $\PC_S(X)$ of $\PC(X)$, called the group of parcelwise-$S$ self-transformations of $X$.

When $X$ has no isolated point, the canonical homomorphism $\mathrm{Homeo}(X)\to\PC(X)$ is injective. The inverse image of $\PC_S(X)$ in $\mathrm{Homeo}(X)$ is denoted by $\PC_S^0(X)$.

In particular, we define $S^\wp$ as the pseudogroup on the model space $A$ of restrictions of elements of $\PC_S^0(A)$. It is easy to check that $\PC_S(X)=\PC_{S^\wp}(X)$.

\subsubsection{Piecewise-$S$ inverse monoid and group}\label{s_inmopa}

Define $\ICO_{S\sharp}$ as the set of $f\in\ICO_S(X)$ such that
there exist a finite partition $(D_i)_{i\in I}$ of the domain of $f$ into open subset, a family $(g_i)_{i\in I}$ in $\PB_S(X)$ such that the domain $E_i$ of $g_i$ contains the closure $\overline{D_i}$, and such that $g_i$ extends $f_i$.

It is easy to check that this is an inverse submonoid of $\ICO_S(X)$. We call it the piecewise-$S$ inverse monoid of $X$. Its image in $\PC(X)$ is a subgroup $\PC_{S\sharp}(X)$ of $\PC(X)$, called group of piecewise-$S$ self-transformations of $X$.

When $S$ consists of all local homeomorphisms, we denote these by $\ICO_\sharp(X)$ and $\PC_\sharp(X)$.

\subsection{Curves and doubling tricks}\label{doubpoints}

By curve we mean a purely 1-dimensional Hausdorff paracompact topological manifold with finitely many connected components. A connected curve is homeomorphic to the circle or an open interval.

Start from a curve $A$ with a pseudogroup $S$. A finitely-charted $S$-modeled curve is a curve endowed with an $S$-structure definable by finitely many charts (the condition is automatic for compact $S$-modeled curves).

By standard curve we mean finite disjoint union of open bounded intervals and circles (where a circle is a copy of $\R/a\Z$ for some $a>0$). A more intrinsic (but somewhat using more formalism than necessary) is to define a standard curve as a purely 1-dimensional oriented Riemannian manifold, with finitely many components.

Given a standard curve $X$, local orientation-preserving measure-preserving homeomorphisms yield a canonical $\mathbf{Iso}^+$-structure on $X$. Therefore, for every pseudogroup $S$ on the circle including the local orientation-preserving isometries, it endows $X$ with a canonical $S$-structure.

The idea of doubling points in one-dimensional dynamics is very classical, and often attributed to Denjoy. 

The idea underlying the following lemma appears in the context of interval exchanges in Danthony-Nogueira's article \cite{DN}. I thank Thierry Bousch for bringing this important observation to my attention.

\begin{lem}\label{bousch}
For any curve $X$ endowed with an orientation, endow $X^\pm=X\times\{1,-1\}$ with the product topology and an orientation:
that of $X$ on $X\times\{1\}$ and the reverse one on $X\times\{-1\}$. 

Then there is a natural injective group homomorphism $\Phi$ of $\PC(X)$ into $\PC^+(X^\pm)$, whose image is the centralizer of the involution $\sigma:(x,1)\leftrightarrow (x,-1)$ in $\PC^+(X^\pm)$, and which makes the projection $X^\pm\to X$ equivariant.
\end{lem}
\begin{proof}
For $f\in\ICOtop(X)$ and $x\in D_f$, define the reduced derivative $f^\imath(x)$ as equal to $1$ or $-1$ according to whether $f$ is locally orientation-preserving or orientation-reversing at $x$ (this is well-defined because $X$ is endowed with an orientation). It can be thought of as the sign of the derivative, but is defined for arbitrary piecewise strictly monotonic functions.

It satisfies the same property as the derivative for composition: when defined, we have $(f_2\circ f_1)^\imath(x)=f_1^\imath(x)f_2^\imath(f_1(x))$. With a suitable derivability assumption we would obtain an action on the tangent bundle, and then, modding out by the action of positive scalars, an action on the orientation bundle. This idea works directly thanks to the above formula.
Namely, for any $f\in\ICOtop(X)$, and $(x,\eps)\in X\times\{\pm 1\}$, define $\tau(f)(x,\eps)=(f(x),f^\imath(x)\eps)$. Note that it obviously commutes with $s$, and preserves the given orientation. That it defines a partial action with domain of definition $D_f\times\{\pm 1\}$ is immediate from the composition formula. It then induces a group homomorphism of $\PC(X)$ into $\PC^+(X^\pm)$, which is clearly injective. That the image consists of the centralizer of $\sigma$ is straightforward, as well as the additional statement.
\end{proof}

See also Lemma \ref{bousch_s} in the additional presence of a geometric structure.

\begin{rem}
In the context of piecewise isometric maps, the elements in the image of $\Phi$ were called ``linear involutions" in \cite{DN}; thus ``linear involutions" denote one way to represent the group of ``interval exchanges with flips", thus encoded as interval exchanges in this model. Since these are neither linear nor involutions, we prefer avoid this terminology.
\end{rem}

\begin{rem}
There is another interesting topology on $X^\pm$, instead of the product topology as above, namely the Denjoy topology $\tau_{\mathrm{Denj}}$. It is defined as the (local) ordering (in the case of the interval, it consists of the lexicographic order consisting in splitting each $x$ into two consecutive elements). It is compact if $X$ is compact. Its main interest, essentially mentioned by Keane in \cite{Kea}, is that the cofinite-partial action of $\PC(X)$ on $X^\pm$ extends to a continuous action on $(X^\pm,\tau_{\mathrm{Denj}})$ is continuous. Unlike the product topology, it is not (locally) metrizable.
\end{rem}


\section{General results}

\subsection{Commensurating actions}\label{s_coma}

Denote by $\tu$ the symmetric difference between subsets of a set, and $|\cdot|$ the cardinal function.

Given a group $G$ acting on a set $Y$, a subset $X\subset Y$ is commensurated if $X\tu gX$ is finite for every $g\in G$. We then call $(Y,X)$ a commensurating action of $G$. We say that the commensurating action $(Y,X)$ is transfixing, or that the subset $X\subset Y$ is transfixed, if there exists a $G$-invariant subset $X_0$ such that $X\tu X_0$ is finite. The function $\ell_X:g\mapsto |X\tu gX|$ is called the cardinal definition function of the commensurating action $(Y,X)$. If $X$ is transfixed then $\ell_X$ is obviously bounded; the converse also holds:

\begin{thm}[\cite{BPP}]\label{t_transf}
The commensurating $G$-action $(Y,X)$ is $G$-transfixing if and only if its cardinal definite function $\ell_X$ is bounded on $G$.
\end{thm}

Continue with an arbitrary subset $X$ of $Y$. Let us denote $\ell^+_X:g\mapsto |X\smallsetminus gX|$ and $\ell^-_X(g)=\ell^+_X(g^{-1})=|X\smallsetminus g^{-1}X|$. Call $\ell^+_X$ and $\ell^-_X$ the loanshark and the prodigal semi-index functions of $(Y,X)$. Note that $Y$ being commensurated means that either of these functions takes finite values, and transfixed means that either of these functions is bounded.
Each of these functions, say $\ell$, satisfies $\ell(1)=0$ and $\ell(gh)\le\ell(g)+\ell(h)$ for all $g,h\in G$. Note that $\ell_X=\ell^+_X+\ell^-_X$.  (When $X$ is $G$-commensurated, the difference $g\mapsto\ell^+(g)-\ell^+(g^{-1})$ is a well-defined map $G\to\Z$, and actually a group homomorphism, called index character of $(Y,X)$, see \cite[\S 4.H]{CorFW}.)

\begin{prop}\label{cardefz}
1) \cite[Cor.\ 6.A.2]{CorFW} Every cardinal definite function on $\Z$ has the form $n\mapsto m|n|+b(n)$ for some unique $m\in\N$ and bounded non-negative function $b:\Z\to\N$ (where $\N=\{0,1,\dots\}$).

2) For every commensurating action of $\Z$, the corresponding prodigal semi-index function has, in restriction to $\N$, the form $n\mapsto m'|n|+b(n)$ for some unique $m'\in\N$ and bounded non-negative function $b':\Z\to\N$.
\end{prop}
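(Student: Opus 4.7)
I would prove part (2) by an orbit decomposition under the $\Z$-action. Fix a generator $g$ of $\Z$ and decompose $Y=\bigsqcup_\alpha O_\alpha$ into $\langle g\rangle$-orbits; set $A_\alpha=X\cap O_\alpha$, so that $X\tu gX=\bigsqcup_\alpha(A_\alpha\tu gA_\alpha)$ is finite. Only finitely many orbits will be \emph{interesting}, in the sense that $gA_\alpha\neq A_\alpha$; for the rest, $A_\alpha$ is $g$-invariant inside $O_\alpha$, and these orbits contribute $0$ to $\ell^\pm_X(g^n)$ for every $n$. The interesting finite orbits each contribute at most $|O_\alpha|$, hence a uniformly bounded amount to $\ell^-_X(g^n)$ as $n$ varies.

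The essential analysis concerns the interesting infinite orbits. Each is isomorphic to $\Z$ with $g$ acting as the shift $k\mapsto k+1$, and $A_\alpha\subset\Z$ is a commensurated subset, i.e., one with $|A_\alpha\tu(A_\alpha+1)|<\infty$. An elementary classification shows that such a subset differs by a finite symmetric difference from exactly one of $\emptyset$, $\Z$, $\N$, or $-\N$. A direct computation of $|A\smallsetminus(A-n)|$ for $n\geq 0$ in each of the four cases reveals that the contribution to $\ell^-_X(g^n)$ is bounded for types $\emptyset$, $\Z$, and $\N$ (the bound depending only on the symmetric difference to the model), and equals $n$ up to a bounded error for type $-\N$.

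Summing all contributions and letting $m'\in\N$ be the number of interesting orbits of type $-\N$, I obtain $\ell^-_X(g^n)=m'n+\tilde b(n)$ for $n\geq 0$ with $\tilde b$ bounded. To upgrade $\tilde b$ to a non-negative function, I would invoke the subadditivity of $\ell^-_X$ on $\N$ (noted in \S\ref{s_coma}) together with Fekete's lemma: $\lim_{n\to\infty}\ell^-_X(g^n)/n=\inf_{n\geq 1}\ell^-_X(g^n)/n$, which equals $m'$ by the preceding analysis; hence $\ell^-_X(g^n)\geq m'n$ and $b'(n):=\ell^-_X(g^n)-m'n\in\N$. Uniqueness of $m'$ then comes from its characterization as this limit, after which $b'|_\N$ is determined; it may be extended to $\Z$ as any bounded $\N$-valued function to match the statement. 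I expect the main technical obstacle to be the classification of $\Z$-commensurated subsets of $\Z$ and uniform control of the bounded remainders as $n$ varies, but both are elementary for cyclic actions.
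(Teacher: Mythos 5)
Your proof is correct and follows essentially the same route as the paper's: decompose into $\langle g\rangle$-orbits, reduce to the finitely many orbits meeting $X\tu gX$, classify commensurated subsets of a shift-orbit up to finite symmetric difference as $\emptyset,\Z,\N,-\N$, and use subadditivity (your explicit appeal to Fekete's lemma is exactly the paper's ``simple argument using sub-additivity'') to get $b'\ge 0$. The only omission is part (1), which the paper deduces from (2) via $\ell(n)=\ell^+(n)+\ell^+(-n)$ and the symmetry $\ell(n)=\ell(-n)$; since (1) is anyway cited from \cite{CorFW}, this is minor.
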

\begin{proof}
2) Let $(Y,X)$ be a commensurating action of $\Z=\langle u\rangle$. Let $(Y,X)$ be a commensurating action of $\Z=\langle u\rangle$ and $X$ a commensurating subset; let $\ell=\ell_{Y,X}$ be the corresponding cardinal definite function and $\ell^+=\ell^+_{Y,X}$.

We start with the case when $Y$ is $\langle u\rangle$-transitive, that is, consists of a single cycle. If $Y$ is finite, then $\ell$ is bounded. 
Otherwise, $\Z$ acts simply transitively on $Y$ and hence we can suppose that $Y=\Z$ with $u(n)=n+1$. Then $X$ is a subset with finite boundary, which therefore has a finite symmetric difference with some $X'\in\{\emptyset,-\N,\N,\Z\}$. So $\ell^+_{X}-\ell^+_{X'}$ is bounded. We have $\ell^+_{X'}|_\N=0$ for $X'\in\{\emptyset,-\N,\Z\}$, and $\ell^+_{\N}(n)=n$ for all $n\in\N$. We can thus write $\ell^+(n)=m^+n+b(n)$ with $m^+\in\{0,1\}$, for all for $n\in\N$. (Note that we have a similar formula for $\ell^-$ with some $m^-\in\{0,1\}$ and that $(m^+,m^-)\in\{(0,0),(1,0),(0,1)\}$.)
A simple argument using sub-additivity of $\ell^+$ shows that $b\ge 0$.

Adding over finitely many orbits, we obtain the result when $Y$ consists of finitely many orbits.

In general, let $W$ be the union of orbits of elements of the finite subset $X\tu uX$; it consists of finitely many orbits $W_i$. Then $X\cap W^c$ is invariant, and hence we have $\ell=\ell_{W,X\cap W}$ and $\ell^+=\ell_{W,X\cap W}$. This reduces to the case when there are finitely many orbits, which has been settled.

1) From (2) and since $\ell(n)=\ell^+(n)+\ell^+(-n)$, we can write $\ell(n)=m_\pm n+b_\pm(n)$ for all $n\in\pm\N$. Since $\ell(n)=\ell(-n)$ for all $n$, taking the limit of $(\ell(n)-\ell(-n))/n$ when $n\to\infty$ yields $m_+=m_-$, and in turn we deduce $b_+=b_-$.
\end{proof}

The first consequence below was originally observed as a consequence of a more difficult result of Haglund \cite{Hag} on isometries of CAT(0) cube complexes.

\begin{cor}\label{exfw}
Let $G$ be a group and $\langle c\rangle$ a cyclic subgroup. Suppose that $c$ is distorted (as defined before Corollary \ref{distortedaff}), or $c$ is unboundedly divisible (as defined after Definition \ref{def_fw}). Then $(G,\langle c\rangle)$ has relative Property FW.
\end{cor}
\begin{proof}Let $\ell$ be a cardinal definite function on $G$.

Suppose that $c$ is distorted in some finitely generated subgroup $\Gamma$ of $G$, and let $|\cdot|$ be the word length on $\Gamma$ with respect to some finite generating subset. Since $\ell$ is subadditive, there exists $C$ such that $\ell(g)\le C|g|$ for all $g\in \Gamma$. In particular, $\ell(c^n)\le C|c^n|$. If $c$ is distorted, then $\lim |c^n|/n=0$, and we deduce $\lim\ell(c^n)/n=0$. By Proposition \ref{cardefz}, it follows that $\sup_n\ell(c^n)<\infty$.

Also by Proposition \ref{cardefz}, the limit $m(g)=\lim\ell(g^n)/n$ belongs to $\N$ for all $g\in G$. Clearly, $m(g^k)=km(g)$. It immediately follows that if $c$ has roots of unbounded order, then $m(c)=0$, and hence, again by Proposition \ref{cardefz}, we have $\sup_n\ell(c^n)<\infty$. 
\end{proof}

\begin{prop}\label{cardefab}
Let $A$ be a finitely generated abelian group and $\ell$ a cardinal-definite function on $A$. Then there exist subgroups $B,A'$ of $A$ such that\begin{itemize}
\item $B\cap A'=\{0\}$;
\item $B+A'$ has finite index in $A$;
\item the length $\ell$ is bounded on $B$;
\item the length $\ell$ has growth equivalent to the word length on $A'$.
\end{itemize}
\end{prop}
\begin{proof}
Let $B(A,\ell)$ be the maximal subgroup on which $\ell$ is bounded (it clearly exists). We first assume that $B(A,\ell)=\{0\}$. Hence, $A$ is torsion-free, so we can suppose that $A=\Z^d$, and we have to show that $\ell$ is equivalent to the word growth.

Let $f$ be a cardinal-definite function on $A$. First suppose that it is associated to a transitive commensurating action $A/E$, with commensurated subset $M$, and that $f$ is unbounded. Then $A/E$ has more than one end, and hence is 2-ended. Let $\chi:A/E\to\Z$ a surjective homomorphism (this is unique up to sign). Then, up to replace $\chi$ with $-\chi$, the subset $M$ has finite symmetric difference with $\chi^{-1}(M)$, and then $\ell(v)=m|\chi(v)|+O(1)$, where $m$ is the cardinal of the kernel of $\chi$.

In general, as in the proof of Proposition \ref{cardefz}, $f$ is equal to the cardinal-definite function associated to an action with finitely many orbits (this actually holds, by the same arguments, for arbitrary finitely generated groups). Hence there exist $k$ and homomorphisms $A\to\Z$ such that we have $f(v)=\sum_{i=1}^k|\chi_i(v)|+O(1)$. Note that $\chi_i$ extends to a linear form on $\R^d$ with integral coefficients. Then $\nu=\sum_{i=1}^k|\chi_i|$ defines a seminorm on $\R^d$, whose vanishing subspace is $\bigcap_i\mathrm{Ker}(\chi_i)$. The latter is a rational subspace. Since $\ell$ is unbounded on any nonzero subgroup of $\Z^d$, we deduce that this rational subspace is zero. Hence $\nu$ is a norm, hence $\nu\ge c\|\cdot\|_1$ for some $c>0$. Since the $\ell^1$-norm $\|\cdot\|_1$ coincides with the word length on $\Z^d$, we thus have the required inequality.

In general ($B(A,\ell)$ arbitrary), let $A'$ be any maximal subgroup among those with $B(A,\ell)\cap A'=\{0\}$. Since $B(A',\ell)=\{0\}$, we obtain the result by applying the previous case to $A'$.
\end{proof}


The following lemma is far from optimal but we find convenient to write it for reference.

\begin{lem}\label{fwrelie}
Let $G$ be a Lie group of dimension 1, of the form $G^\circ\rtimes F$ with $F$ either trivial or cyclic of order 2, acting by inversion on the abelian unit component $G^\circ$. Then $G$ has a finitely generated dense subgroup $\Gamma$ such that $(G,\Gamma)$ has relative Property FW as an abstract group.
\end{lem}
\begin{proof}
Let $f$ be a cardinal definite function on $G$. 
Choose $u,v\in G^\circ$ generating a dense subgroup $\Lambda$ of $G^\circ$, and set $\Gamma=\Lambda\rtimes F$.
Since each of $u$, $v$ is divisible in $G$, we have $f$ bounded on both $\langle u\rangle$ and $\langle v\rangle$ by Corollary \ref{exfw}. Since $f$ is subadditive and since every element of $\Gamma$ can be written as $u^nv^mk$ with $(n,m,k)\in\Z^2\times F$, we deduce that $f$ is bounded.
\end{proof}

\subsection{Lemmas of B. Neumann and P. Neumann}

We use the following lemma holding for arbitrary group actions, which for convenience we refer to as Neumann's lemma:

\begin{lem}\label{neulem}
Let $G$ be a group and $Z$ a $G$-set. Let $U$ be a cofinite subset of $Z$ including all finite $G$-orbits. Then for any finite subset $F$ of $Z$ there exists $g\in G$ such that $gF\subset U$. 
\end{lem}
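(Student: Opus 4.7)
The plan is to reduce this to B.H.~Neumann's classical lemma on coset coverings, which states that if a group is covered by finitely many cosets of subgroups $\bigcup_{i=1}^n h_i H_i$, then at least one $H_i$ has finite index in $G$. Equivalently, a finite union of cosets of subgroups of infinite index is never all of $G$.

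First I would set $V = Z \smallsetminus U$, which is finite by hypothesis. Seeking $g \in G$ with $gF \subset U$ is equivalent to seeking $g$ such that $gx \notin V$ for every $x \in F$. Partition $F = F_{\mathrm{fin}} \sqcup F_{\mathrm{inf}}$ according to whether the $G$-orbit of $x$ is finite or infinite. For $x \in F_{\mathrm{fin}}$, the orbit $Gx$ is finite, hence contained in $U$ by assumption, so $gx \in U$ automatically for every $g \in G$. Thus only points in $F_{\mathrm{inf}}$ impose real constraints.

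For each $x \in F_{\mathrm{inf}}$ and each $v \in V \cap Gx$, pick an element $h_{x,v} \in G$ with $h_{x,v} \cdot x = v$. Then $\{g \in G : gx = v\} = h_{x,v} G_x$, where $G_x$ denotes the stabilizer. The set of $g$ that fail the condition is therefore
\[
B = \bigcup_{x \in F_{\mathrm{inf}}} \bigcup_{v \in V \cap Gx} h_{x,v} G_x,
\]
a finite union of cosets since both $F_{\mathrm{inf}}$ and $V$ are finite. Because $x \in F_{\mathrm{inf}}$ means $Gx$ is infinite, the stabilizer $G_x$ has infinite index in $G$ (as $|G:G_x| = |Gx|$).

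The main (and only) nontrivial step is to conclude $B \neq G$. By B.H.~Neumann's lemma, a group is never a finite union of cosets of subgroups of infinite index, so $G \smallsetminus B$ is nonempty. Any $g$ in this complement satisfies $gx \notin V$ for all $x \in F_{\mathrm{inf}}$ and, trivially, for all $x \in F_{\mathrm{fin}}$; hence $gF \subset U$, as required.
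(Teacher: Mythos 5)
Your proof is correct and rests on the same engine as the paper's: both split $F$ according to finite versus infinite orbits and ultimately invoke B.~Neumann's theorem that a group is not a finite union of cosets of infinite-index subgroups. The only difference is one of packaging: the paper cites P.~Neumann's disjointness lemma (there is $g$ with $gF'\cap F'=\emptyset$ for $F'$ finite in a $G$-set with no finite orbits, applied to $F'=(Z\smallsetminus U)\cup(F\cap Z_\infty)$) as an intermediate step, whereas you inline its proof via the coset decomposition of the bad set.
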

\begin{proof}
A result of P.\ Neumann \cite[Lemma 2.3]{Neu76} states ($\ast$) that for every $G$-set $W$ with no finite orbit and every finite subset $F'$ of $W$, there exists $g\in G$ such that $gF'\cap F'$ is empty. (This is an easy consequence of B. Neumann's result \cite{Neu54} that a group is never covered by finitely many left cosets of infinite index subgroups.)

Let $Z_\infty$ be the union of all infinite $G$-orbits; by assumption $U\cup Z_\infty=Z$. Define $F'=(Z\smallsetminus U)\cup (F\cap Z_\infty)$. Then $F'\subset Z_\infty$, and we can apply ($\ast$) to $W=Z_\infty$ and $F'$: there exists $g\in G$ such that $gF'\cap F'=\emptyset$. In particular, $gF'\subset U$. Since $g(F\smallsetminus Z_\infty)\subset Z\smallsetminus Z_\infty\subset U$ by assumption, we deduce that $gF\subset U$.
\end{proof}

Neumann's lemma can be used to obtain separation properties:

\begin{cor}\label{neulemc}
Let $G$ be a group and $Y$ a topological space on which $G$ acts by self-homeomorphisms. Let $U$ be a cofinite Hausdorff subset of $Y$ including all finite $G$-orbits. Then $Y$ is Hausdorff.
\end{cor}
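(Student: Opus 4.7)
The plan is to apply Lemma \ref{neulem} twice: first to push $x$ and $y$ into the Hausdorff region $U$ where they can be separated, and then to push the resulting finite overlap (which a priori sits in $Y\setminus U$) into $U$ as well so that it can be removed cleanly. The final step is to transport the resulting disjoint open sets back by the corresponding self-homeomorphism.

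Concretely, given distinct points $x,y\in Y$, applying Lemma \ref{neulem} to $F=\{x,y\}$ produces $g\in G$ with $\{gx,gy\}\subset U$. Since $U$ is Hausdorff in the subspace topology, I separate $gx$ and $gy$ by disjoint open subsets of $U$ of the form $V\cap U$ and $W\cap U$, for some $V,W$ open in $Y$. Then $V\cap W$ is contained in the finite set $Y\setminus U$, and it is disjoint from $\{gx,gy\}$ since these two points already lie in $V\cap U$ and $W\cap U$. Let $F_0=V\cap W$.

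Next I apply Lemma \ref{neulem} a second time to the enlarged finite set $\{gx,gy\}\cup F_0$, obtaining $h\in G$ with $h(\{gx,gy\}\cup F_0)\subset U$. Inside $U$, which is Hausdorff and hence $T_1$, the finite set $hF_0$ is closed and misses both $h(gx)$ and $h(gy)$, so I can find disjoint open subsets $A,B$ of $U$ with $h(gx)\in A$, $h(gy)\in B$, and $(A\cup B)\cap hF_0=\emptyset$. Writing $A=V_1\cap U$ and $B=W_1\cap U$ with $V_1,W_1$ open in $Y$, I set $\tilde V=hV\cap V_1$ and $\tilde W=hW\cap W_1$. These are open in $Y$, they contain $h(gx)$ and $h(gy)$ respectively, and $\tilde V\cap\tilde W=h(V\cap W)\cap(V_1\cap W_1)=hF_0\cap(V_1\cap W_1)$, which is empty because $hF_0\subset U$ while $V_1\cap W_1\subset Y\setminus U$ (the latter holding since $A\cap B=\emptyset$). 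Pulling back by the self-homeomorphism $(hg)^{-1}$ yields disjoint open neighborhoods of $x$ and $y$.

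The only delicate point is that $U$ is not assumed open in $Y$, so a single application of Lemma \ref{neulem} does not suffice: lifting separating open sets from the subspace topology on $U$ to $Y$ leaves a residual finite overlap sitting in the non-Hausdorff part $Y\setminus U$, and in a non-Hausdorff space one cannot in general remove finitely many points from an open set while keeping it open. The second application of Lemma \ref{neulem} resolves this by transporting the offending overlap into $U$, where the Hausdorff property again applies; the disjointness of $\tilde V$ and $\tilde W$ is then forced tautologically by the complementary containments $hV\cap hW\subset U$ and $V_1\cap W_1\subset Y\setminus U$.
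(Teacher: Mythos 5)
Your proof is correct. The paper's own proof is the single line ``Apply Lemma \ref{neulem} to 2-element subsets of $Z$'', so you are following the intended approach, but you have correctly identified and filled a gap that this one-liner elides: since $U$ is not assumed open in $Y$, disjoint open subsets of $U$ separating $gx$ and $gy$ lift to open subsets of $Y$ whose intersection is a finite subset of $Y\smallsetminus U$, and in a non-$T_1$ space one cannot simply delete those finitely many points while staying open (indeed, doing so directly would amount to assuming the separation one is trying to prove). Your second application of Lemma \ref{neulem}, transporting that finite overlap into $U$ and intersecting with the new separating sets, resolves this cleanly; all the set-theoretic identities you use ($h(V\cap W)=hV\cap hW$, $V_1\cap W_1\subset Y\smallsetminus U$) check out. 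For comparison, an alternative repair that stays closer to the literal one-line proof is to first show that $U$ is automatically open: if $z\in Y\smallsetminus U$ and $u\in U$ lies in $\overline{\{z\}}$, choose $g$ with $gz,gu\in U$; then $gu\in\overline{\{gz\}}\cap U=\{gz\}$ because $U$ is $T_1$, forcing $u=z$, a contradiction. Hence $\overline{\{z\}}\subset Y\smallsetminus U$ for every $z\in Y\smallsetminus U$, so the finite set $Y\smallsetminus U$ is closed, $U$ is open, and the naive one-step separation then goes through. Either route is fine; yours is complete as written.
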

\begin{proof}
Apply Lemma \ref{neulem} to 2-element subsets of $Y$, one first obtains that for all $x\neq y$, the point $y$ does not belong to the closure of $\{x\}$. Hence $U$ is open. Again apply Lemma \ref{neulem} to 2-element subsets of $Y$ to obtain the Hausdorff Property.
\end{proof}

In turn, this has the following application.

\begin{lem}\label{haudo}
Let $G$ be a (discrete) group acting continuously on a topological $T_1$-space $Y$. Let $X,X'$ be subsets of $Y$, such that $X$ is $G$-essential (i.e., $X$ meets all $G$-orbits), $X$ is open, $X'$ is $G$-invariant, and the symmetric difference $X\tu X'$ is finite. Suppose that $X$ is Hausdorff. Then $X'$ has a cofinite $G$-invariant subset $X''$ that is Hausdorff and open in $Y$.
\end{lem}
\begin{proof}
Let $F$ be the union of finite $G$-orbits in $X'$ meeting $X'\smallsetminus X$. Define $X''=X'\smallsetminus F$. Hence $X''$ is a cofinite $G$-invariant subset of $X'$ and every element of $X''\smallsetminus X$ belongs to an infinite $G$-orbit. By Corollary \ref{neulemc} applied to $Y=X''$ and $U=X\cap X''$, we infer that $X''$ is Hausdorff.

Since $Y$ is $T_1$ (singletons are closed), $X\cap X''$ is open; hence $X''=\bigcup_{g\in G}g(X\cap X'')$ is open too.
\end{proof}

In terms of partial actions, this has the following consequence:

\begin{prop}\label{transfihau}
Consider a topological partial action of a (discrete) group $G$ on a Hausdorff topological space $X$. Suppose that $X$ is $G$-transfixed. Then there exists a $G$-invariant subset $X'$ of $\hat{X}$ with $X'\tu X$ finite, that is $G$-invariant, Hausdorff and open in $\hat{X}$.
\end{prop}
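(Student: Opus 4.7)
The plan is to apply Lemma~\ref{haudo} directly, taking $Y = \hat{X}$.

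First I would recall the two features of the universal globalization that make this workable: by Theorem~\ref{tunivglot}, $X$ embeds as an open subspace of $\hat{X}$, and by the construction of $\hat{X}$, the $G$-translates of $X$ cover $\hat{X}$, so $X$ is $G$-essential (it meets every $G$-orbit in $\hat{X}$). The hypothesis that $X$ is Hausdorff then gives exactly the three conditions on $X$ required to invoke Lemma~\ref{haudo}.

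Next, since $X$ is transfixed in $\hat{X}$, by definition there exists a $G$-invariant subset $X_0 \subset \hat{X}$ with $X \tu X_0$ finite. Apply Lemma~\ref{haudo} with this choice of $X$ (open, $G$-essential, Hausdorff) and $X' := X_0$ (which is $G$-invariant and has finite symmetric difference with $X$). The lemma produces a cofinite $G$-invariant subset $X'' \subset X_0$ which is Hausdorff and open in $\hat{X}$.

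Finally, I would set the desired set to be $X''$. By construction it is $G$-invariant, Hausdorff, and open in $\hat{X}$. For the condition $X'' \tu X$ finite: since $X_0 \setminus X''$ is finite by Lemma~\ref{haudo} and $X \tu X_0$ is finite by the transfixing hypothesis, we get
\[
X \tu X'' \subset (X \tu X_0) \cup (X_0 \tu X''),
\]
which is finite. This is essentially the entire content; there is no real obstacle since all the work is packaged into Lemma~\ref{haudo} and the topological properties of the universal globalization. The only point requiring minor care is verifying the $G$-essentiality of $X$ in $\hat{X}$, which is immediate from the construction of $\hat{X}$ as the quotient of $G \times X$ by the partial-action equivalence relation.
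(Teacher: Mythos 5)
Your proposal is correct and follows essentially the same route as the paper: the paper's proof also consists of passing to the universal globalization, picking a $G$-invariant $X_0$ with $X\tu X_0$ finite, and invoking Lemma~\ref{haudo} to shrink $X_0$ to a cofinite $G$-invariant subset that is Hausdorff and open in $\hat{X}$. Your added verifications (openness of $X$ in $\hat{X}$ via Theorem~\ref{tunivglot}, $G$-essentiality via Theorem~\ref{tunivglo}(2), and the symmetric-difference estimate) are exactly the points the paper leaves implicit.
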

\begin{proof}
Since $X$ is transfixed, it is transfixed as a subset of its universal globalization $\hat{X}$, which is $T_1$ by Lemma \ref{glo_t1}. Let $X''$ be a $G$-invariant subset of $\hat{X}$, such that $X\tu X''$ is finite. By Lemma \ref{haudo}, $X''$ has a $G$-invariant, cofinite subset that is both Hausdorff and open in $\hat{X}$.
\end{proof}

We are led to consider partial actions that are obtained from actions by restricting domains of definitions. For instance, the group $\PC^0_{\mathbf{Aff}}(\R/\Z)$ of piecewise affine self-homeomorphisms of $\R/\Z$ acts on $\R/\Z$, and it is natural to restrict to a partial action preserving the affine structure, by restricting the domain of definition of $f$ to the set of points at which $f$ is affine (or equivalently derivable). The following proposition typically addresses such partial actions, characterizing the condition of being transfixing.

\begin{prop}\label{glo_tran}
Let $G$ be a group and $\alpha$ an action of $G$ on a set $Y$. Consider a cofinite-partial action $\beta$ of $G$ on $Y$ such that the identity map $(Y,\beta)\to (Y,\alpha)$ is $G$-equivariant.
Suppose that $Y$ is transfixed for the partial action $\beta$. Then there exist cofinite subsets $X\subseteq X'\subseteq Y$ with $X'$ $\alpha(G)$-invariant, such that the inclusion of $(X,\beta)$ into $(X',\alpha)$ is a universal globalization.

In particular if there is no finite $\alpha(G)$-orbit in $Y$ then the conclusion can be written as: there exists a cofinite subset $X\subseteq Y$ such that the inclusion of $(X,\beta)$ into $(Y,\alpha)$ is a universal globalization.
\end{prop}
\begin{proof}
Let $i:Y\to\hat{Y}$ be a universal globalization of $\beta$.
By the universal property, there exists a $G$-equivariant map $\pi:(\hat{Y},\hat{\beta})\to (Y,\alpha)$ such that $\pi\circ i=\mathrm{id}_Y$. 

For every subset $X\subseteq Y$, its universal globalization is the union of all $\hat{\beta}(G)$-orbits of $\hat{Y}$ meeting $i(X)$. Let $X$ be a cofinite subset of $Y$ that is transfixed above (i.e., such that $X$ has finite complement in its universal globalization). Removing finitely many finite $\hat{\beta}(G)$-orbits, we can suppose that $\hat{X}\smallsetminus i(X)$ meets no finite $\hat{\beta}(G)$-orbit. We claim that $\pi$ is injective on $\hat{X}$. Indeed, $\pi$ is clearly injective on $i(X)$. Since any pair in $\hat{X}$ can be moved into $i(X)$ by a group element (applying Lemma \ref{neulem} to the action $\hat{\beta}$), we deduce injectivity of $i$ on $\hat{X}$. Then define $X'=i(\hat{X})$: then $X\subseteq X'\subseteq Y$ satisfy the given statement.
\end{proof}

\subsection{Canonical partial action of self-homeomorphism groups}\label{cpsh}

Let $X$ be a topological space. Consider the group $\PC(X)$ of parcelwise continuous self-transformations of $X$, introduced in \S\ref{s_parcelwise}.

Denote by $\pi$ the projection $\ICOtop(X)\to\PC(X)$. That is, for $g\in\PC(X)$, $\pi^{-1}(\{g\})$ is the set of representatives of $g$ as partial homeomorphisms between two open cofinite subsets of $X$.

For $g\in\PC(X)$, define $\alpha(g)=\bigcup_{\sigma\in\pi^{-1}(\{g\})}\sigma$. Here the union is understood among subsets of $X^2$. 

\begin{lem}\label{lseco}
Suppose that $X$ is Hausdorff with no isolated point. Then $\alpha(g)$ is a partial bijection for every $g\in\PC(X)$.
\end{lem}
\begin{proof}
It is clear that $\alpha(g)\subset X\times X$ has cofinite (hence open) projections into $X$. So we have to show that projections are injective, and by symmetry ($\alpha(g^{-1})$ being the flip of $\alpha(g)$) it is enough to check for the first projection. To show its injectivity, consider $(x,y),(x,y')\in\alpha(g)$. Hence $(x,y)\in\sigma$ and $(x,y')\in\sigma'$ for some $\sigma,\sigma'\in\pi^{-1}(\{g\})$. Since every cofinite subset of $X$ is dense, there exists a net $(x_i)$ in the domain of $\sigma\cap\sigma'$, tending to $x$. Then, by continuity of both $\sigma$ and $\sigma'$ and using that $Y$ is Hausdorff, we have $y=\lim_i\sigma(x_i)=\lim_i\sigma'(x_i)=y'$.
\end{proof}

It is easy to check the failure of the conclusion of Lemma \ref{lseco} whenever $X$ has at least two isolated points. Thanks to Lemma \ref{lseco}, we have:

\begin{prop}\label{semic}
Let $X$ be a Hausdorff topological space with no isolated point. Then $g\mapsto\alpha(g)$ defines a topological partial action of $\PC(X)$ on $X$, which is a splitting for the canonical projection $\pi:\ICOtop(X)\to\PC(X)$.\end{prop}
\begin{proof}
By Lemma \ref{lseco}, this map is well-defined. The first two conditions of partial actions (identity and inverses) are clearly satisfied. Let us check that $\alpha(g)\alpha(h)\subset\alpha(gh)$ for all $g,h\in\PC(X)$. Consider $x\in X$ such that $y=\alpha(h)x$ and $z=\alpha(g)(\alpha(h)x)$ are defined. So $(x,y)\in\alpha(h)$ and $(y,z)\in\alpha(g)$. Hence there exist $\sigma\in\pi^{-1}(\{g\})$ and $\sigma'\in\pi^{-1}(\{h\})$ such that $(x,y)\in\sigma'$ and $(y,z)\in\sigma$. Hence $\sigma\sigma'\in\pi^{-1}(gh)$ and $(x,z)\in\sigma\sigma'$. That is, $(x,z)\in\alpha(gh)$.
\end{proof}

\begin{rem}
The above splitting is not a monoid homomorphism in general: for instance $\mathrm{id}_X=\alpha(1)=\alpha(g^{-1}g)\neq\alpha(g^{-1})\alpha(g)=\mathrm{id}_{D_g}$ when $D_g\neq X$.
\end{rem}

\begin{rem}
From a categorist's point of view, it would be more natural and general to formulate this in a groupoid context, where a partial groupoid action is defined assigning to each object a set and to each arrow a partial bijection between the corresponding sets, with the analogous axioms. Then Proposition \ref{semic} adapts to this more general setting: every parcelwise continuous map $g:X\to Y$ between Hausdorff topological spaces without isolated point has a canonical representative $\alpha(g)$ in $\ICOtop(X)$, satisfying the axioms of groupoid partial action. We sticked to the group case only for the sake of conciseness, and because the generalization requires no further ingredient.
\end{rem}

\subsection{Generalities about the parcelwise and piecewise groups}

\begin{defn}Let $X$, $Y$ be Hausdorff topological spaces. For functions $f,g\in Y^X$, write $f\sim g$ if $f$ and $g$ coincide on a cofinite subset, and let $[f]$ be the class of $f$ modulo $\sim$.

Say that $f\in Y^X$ is outer continuous at $x\in X$ if $x$ is not isolated and there exists a map $g\in [f]$ that is continuous at $x$. Then the value $g(x)$ does not depend on the choice of $g$ and only on the class $[f]$, and is called the outer limit of $f$ (or of $[f]$) at $x$. Let $D^\circ_f$ (or $D^\circ_{[f]}$) be the set of points at which $f$ is outer continuous.
\end{defn}

Let $X$ be a Hausdorff topological space with no isolated point. Every $\sigma\in\PC(X)$ defines a class modulo $\sim$ in $X^X$ in the above sense.
Then, for $\sigma\in\PC(X)$, the set $D^\circ_\sigma$ of points on which $\sigma$ is outer continuous is cofinite, since it contains the domain of definition of any representative $\hat{\sigma}$. 
In the setting of Proposition \ref{semic}, $D_{\alpha(\sigma)}$ is thus contained in $D^\circ_\sigma$.

Under a strong assumption, we have a converse. Define a topological space $X$ as locally saturated if for every open subset $U$ of $X$, every continuous injective map $U\to X$ is open. 
For instance, topological manifolds of pure dimension $n$ are locally saturated, by Brouwer's invariance of domain theorem (we will use it for $n=1$, in which case this is obvious).

\begin{prop}\label{paroutcon}
Let $X$ be a Hausdorff topological space with no isolated point. Suppose that $X$ is locally saturated. Then for every $\sigma\in\PC(X)$, the domain of definition of $\alpha(\sigma)$ coincides with the subset $D^\circ_\sigma$ of outer continuity of $\sigma$. \end{prop}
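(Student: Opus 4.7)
The plan is to prove the two inclusions $D_{\alpha(\sigma)}\subseteq D^\circ_\sigma$ and $D^\circ_\sigma\subseteq D_{\alpha(\sigma)}$ separately, the first being essentially formal and the second being where the hypothesis of local saturation is needed.

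For the easy inclusion $D_{\alpha(\sigma)}\subseteq D^\circ_\sigma$, I would argue as follows. If $x\in D_{\alpha(\sigma)}$, then by definition of $\alpha(\sigma)$ there exists a representative $\hat{\sigma}\in\pi^{-1}(\{\sigma\})$ with $x\in D_{\hat\sigma}$. Since $\hat\sigma\in\ICO(X)$, the set $D_{\hat\sigma}$ is open and $\hat\sigma$ is continuous on it. As $x$ is non-isolated, the restriction of $\hat\sigma$ to any sufficiently small neighborhood of $x$ (and in particular to such a neighborhood minus $\{x\}$) has limit $\hat\sigma(x)$ at $x$. Hence $x\in D^\circ_\sigma$.

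For the converse inclusion, fix $x\in D^\circ_\sigma$ and let $y$ denote the outer limit of $\sigma$ at $x$. Choose any representative $\tau_0\in\pi^{-1}(\{\sigma\})$. I want to manufacture a representative whose domain contains $x$. First reduce to the case $y\notin D'_{\tau_0}$: if $y=\tau_0(x_0)$ for some $x_0\in D_{\tau_0}$ and $x_0\ne x$, replace $\tau_0$ by its restriction to $D_{\tau_0}\setminus\{x_0\}$; this is still in $\ICO(X)$ because cofinite subsets minus a finite set are open in the Hausdorff space $X$, and it still represents $\sigma$. (If $x_0=x$, then already $x\in D_{\tau_0}\subseteq D_{\alpha(\sigma)}$ and we are done.) Now define $\tau_1=\tau_0\cup\{(x,y)\}$. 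Its domain $D_{\tau_0}\cup\{x\}$ and range $D'_{\tau_0}\cup\{y\}$ are cofinite with finite (hence closed) complements, so both are open; and $\tau_1$ is a bijection between them by the reduction just made.

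It remains to see that $\tau_1$ is a homeomorphism, so that $\tau_1\in\ICO(X)$. Continuity of $\tau_1$ at every point of $D_{\tau_0}$ is immediate, and continuity at $x$ is exactly the assertion that the outer limit of $\sigma$ at $x$ equals $y$: for any neighborhood $V$ of $y$, outer continuity provides a neighborhood $U$ of $x$ with $\tau_0(U\setminus\{x\})\subseteq V$, and since $\tau_1(x)=y\in V$ we get $\tau_1(U\cap D_{\tau_1})\subseteq V$. Thus $\tau_1\colon D_{\tau_1}\to X$ is a continuous injection from an open subset of $X$. The key step is now to invoke local saturation: it forces $\tau_1$ to be an open map, hence a homeomorphism onto its range $D'_{\tau_1}$. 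Consequently $\tau_1\in\ICO(X)$ is a representative of $\sigma$ with $x\in D_{\tau_1}$, so $(x,y)\in\alpha(\sigma)$ and $x\in D_{\alpha(\sigma)}$.

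The main obstacle, and the sole place where the local-saturation hypothesis is used, is the continuity of $\tau_1^{-1}$ at the added point $y$: knowing only that $\sigma$ is outer continuous at $x$, we have no \emph{a priori} control on the behavior of $\sigma^{-1}$ near $y$, so we cannot get openness of $\tau_1$ by symmetric reasoning. Local saturation is precisely the abstract condition that converts ``continuous injective on an open set'' into ``open,'' and that is what makes the extension land inside $\ICO(X)$.
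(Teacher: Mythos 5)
Your proof is correct and follows essentially the same route as the paper: extend a representative by the pair $(x,y)$ with $y$ the outer limit, and invoke local saturation to conclude that the continuous injective extension is open, hence a homeomorphism lying in $\ICO(X)$. The only divergence is in securing injectivity of the extension: you delete the potential conflicting preimage of $y$ from the domain (harmless, since this does not change the class in $\PC(X)$), whereas the paper shows, again via local saturation together with the absence of isolated points, that such a conflicting point cannot exist; both are valid.
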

\begin{proof}
We only have one inclusion to check. Suppose that $\sigma$ is outer continuous at $x$. Let $\tilde{\sigma}$ be a lift of $\sigma$ in $\ICOtop(X)$. If $\tilde{\sigma}$ is defined at $x$, then $x\in D_{\alpha(\sigma)}$ (by construction of the latter). Otherwise, let $y$ be the outer limit of $f$ at $x$. Define $f=\hat{\sigma}\cup\{(x,y)\}$ (viewed as subsets of $X^2$). Then $f$ is a partial map and we have to check that $f$ is injective and $f^{-1}$ is continuous on its domain $D$.

Define a neighborhood $U\subset D$ of $x$ as follows: if $f$ is injective, choose $U=D$. Otherwise, there exists a unique $x'\in D\smallsetminus\{x\}$ such that $f(x)=f(x')$. Since $X$ is Hausdorff, let $U$ be any open neighborhood of $x$ in $D$ whose closure does not contain $x'$. Hence $f$ is injective on $U$ in all cases.
Since $X$ is locally saturated, $f(U)$ is open. Hence $V=\sigma^{-1}(f(U))$ is open; we have
\[V=\sigma^{-1}(f(U\smallsetminus\{x\})\cup\{y\})=(U\smallsetminus\{x\})\cup\sigma^{-1}(\{y\}).\]

If by contradiction $x'$ exists, $\sigma^{-1}(\{y\})=\{x'\}$, and hence we deduce that $(U\smallsetminus\{x\})\cup\{x'\}$ is open. Intersecting with the complement of the closure of $U$, we deduce that $\{x'\}$ is open, a contradiction.

Now let us show that $f^{-1}$ is continuous at $y$. Indeed, $f(U)$ is a neighborhood of $y$ for every neighborhood $U\subset D$ of $x$ and this precisely establishes continuity of $f^{-1}$.
\end{proof}

We finish this subsection by a digression, namely a corollary of Proposition \ref{semic} in terms of near actions. Recall \cite{CorNA} that a near action of a group $G$ on a set $X$ is a homomorphism from $G$ into the group $\PC(X)$, where $X$ is endowed with the discrete topology. Every action of $G$ on $X$ induces a near action, and a near action arising in this way is called realizable.

Given two near actions on sets $X$ and $Y$, one can naturally define the disjoint near action on $X\sqcup Y$. The near action on $X$ is called completable if there exists a set $Y$ and a near action on $Y$ such that the disjoint union near action on $X\sqcup Y$ is realizable.

If $X$ is a topological space, write $X_\delta$ for $X$ endowed with the discrete topology, so we have a canonical inclusion $\PC(X)\subset\PC(X_\delta)$, which thus defines a near action of $\PC(X)$ on $X$.

\begin{cor}
For every Hausdorff topological space $X$ with no isolated point (or with finitely many isolated points), the near action of $\PC(X)$ on $X$ is completable.
\end{cor}

Indeed, Proposition \ref{semic} provides a cofinite-partial action realizing the near action of $\PC(X)$, and hence taking the universal globalization yields completability. See \cite[\S1.P, \S4.K]{CorNA} for more on the link between near actions and cofinite-partial actions. In the presence of isolated points, the near action of $\PC(X)$ on $X$ is not always completable; notably, it is not completable when $X$ is infinite discrete, see \cite{CorNA}.

Given a Hausdorff topological space $X$ with no isolated point, we have inclusions $\mathrm{Homeo}(X)\subset\PC_\sharp(X)\subset\PC(X)$. When $X$ is $S$-modeled, it induces inclusions $\PC_S^0(X)\subset\PC_{S\sharp}(X)\subset\PC_S(X)$.

We say that a Hausdorff topological space $X$ has no local cut point if it satisfies one of the following two equivalent conditions:
\begin{itemize}
\item every $x\in X$, the set of neighborhoods $V$ of $x$ such that $x\in\overline{V\smallsetminus\{x\}}$ and $V\smallsetminus\{x\}$ is connected, is a basis of neighborhoods of $x$;
\item for every $x\in X$ and every neighborhood $W$ of $x$, there is a unique component $Z$ of $W\smallsetminus\{x\}$ such that $x$ belongs to the closure of $Z$.
\end{itemize}

Note that it implies that $X$ has no isolated point. For instance, this holds if $X$ is a topological manifold with no component of dimension $\le 1$, or more generally if $X$ is locally homeomorphic to a locally finite simplicial complex in which every vertex or edge belongs to a triangle, and in which the link at every vertex is connected.

\begin{rem}\label{r_homeo_pc}
Let $X$ be a Hausdorff compact topological space with no local cut point. Then the inclusion $\mathrm{Homeo}(X)\subset\PC(X)$ (holding whenever $X$ is Hausdorff and has no isolated point) is an equality.

Indeed, given that $X$ is Hausdorff compact, the assumption that $X$ has no local cut point is equivalent to the condition that for every finite subset $F$ of $X$, the embedding $X\smallsetminus F\to X$ is the end compactification (in the sense of Specker \cite{Spe}) of $X\smallsetminus F$. 

The equality $\PC(X)=\mathrm{Homeo}(X)$ is a rigidity property, which means, in a sense, that the parcelwise continuous group $\PC(X)$ does not deserve a specific study, and illustrates by contrast, the richness of the context of purely 1-dimensional topological manifolds $X$.
\end{rem}

\begin{rem}\label{rempicon}
Let $X$ be a Hausdorff topological space with no local cut point. Then the inclusion $\mathrm{Homeo}(X)\subset\PC_\sharp(X)$ is an equality.

Indeed, consider $f\in\ICO_\sharp(X)$ and $x\in X$. Consider $I$, $(D_i)$, $(g_i)$ and $(E_i)$ as in \S\ref{s_inmopa}. Since $I$ is finite and $X$ has no isolated point, the set $J$ of $i$ such that $x\in\overline{D_i}$ is not empty. Hence $g_i(x)$ is a limit point of $f(x')$ when $x'\neq x$ tends to $x$; since $x$ is not a local cut point, $g_i(x)$ does not depend on $i$; call it $\bar{f}(x)$; note that $\bar{f}$ extends $f$.
Again using that $I$ is finite one checks that $\bar{f}$ is continuous at $x$. Then $\bar{f}\circ \bar{f}^{-1}$ and $\bar{f}\circ \bar{f}^{-1}$ are defined everywhere, are continuous, and are the identity outside finite subsets. Since $X$ has no isolated point, these are identity. Thus $\bar{f}$ is a homeomorphism. 
\end{rem}

\subsection{On parcelwise-$S$ and piecewise-$S$ groups}

\subsubsection{Partial action $\alpha_S$}\label{s_alphas}

Let $S$ be a pseudogroup on a topological space $A$, and let $X$ be a $S$-modeled topological space. Denote by $\pi_S$ the projection $\ICO_S(X)\to\PC_S(X)$. By definition, a topological partial action on $X$
is $S$-preserving if it maps $G$ into $\mathcal{I}_{S}(X)$.

To any topological $S$-preserving cofinite-partial action of $G$ on $X$, we can consider the induced homomorphism $G\to\PC_S(X)$.

Now assume that $X$ is Hausdorff and has no isolated point.
For $g\in\PC_S(X)$, define $\alpha_S(\sigma)=\bigcup_{\sigma\in\pi_S^{-1}(\{g\})}\sigma$. It is contained in 
$\alpha(g)=\bigcup_{\sigma\in\pi^{-1}(\{g\})}\sigma$ (i.e., where we take the union over the whole preimage in $\ICOtop(X)$). It follows from Lemma \ref{lseco} that if $X$ is Hausdorff and has no isolated points, then $\alpha(g)$ is a partial bijection, and hence $\alpha_S(g)$ is a partial bijection. 

This shows that for $X$ Hausdorff and without isolated point, the above mapping from the set of topological $S$-preserving partial actions of $G$ on $X$ to $\mathrm{Hom}(G,\PC_S(X))$ has a canonical section, and in particular is surjective.

It can happen, and it is one interest of the construction, that $\alpha_S(g)$ is properly contained in $\alpha(g)$. For instance, if $g$ is a piecewise affine homeomorphism and $S$ is the pseudogroup of local affine homeomorphisms, then $\alpha(g)$ is defined everywhere, while $\alpha_S(g)$ is defined outside singular points.

\subsubsection{Transfer of $S$-structure to the globalization}

The following simple proposition plays an essential role; it is also paramount to {\it not} assume $Y$ to be Hausdorff. For this reason, we provide a detailed proof.

\begin{prop}\label{exstru}
Let $S$ be a pseudogroup on a topological space $A$. Let $G$ be a group with a topological $S$-preserving partial action on an $S$-modeled space $X$. Consider a partial topological $G$-space $Y$ with an injective open full essential homomorphism of partial actions $X\to Y$. Then there is a unique $S$-structure on $Y$ extending the $S$-structure on $X$ such that the partial action of $G$ is $S$-preserving.
\end{prop}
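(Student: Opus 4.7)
The plan is to transport the $S$-structure from $X$ to $Y$ along the partial $G$-action. Without loss of generality identify $X$ with its (open) image in $Y$ via the injective open homomorphism; then fullness says that for every $g\in G$, the restriction of $\beta(g)$ to pairs in $X\times X$ is exactly $\alpha(g)$, and essentiality says that every $y\in Y$ admits some $g\in G$ with $\beta(g)y\in X$. Define a candidate atlas $\HHH_Y$ on $Y$ as the collection of all partial maps of the form
\[
\phi = f\circ\beta(g)\big|_{V}\colon V\longrightarrow A,
\]
where $g\in G$, $f\in\HHH=\PB_S(X,A)$, and $V$ is an open subset of $D_{\beta(g)}$ with $\beta(g)(V)\subset D_f\subset X$, subsequently closed under the two axioms of a complete atlas (restriction along $\hat S$ and union of compatible families). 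For any $y\in Y$ pick $g$ with $\beta(g)y\in X$; then $D_{\beta(g)}\cap\beta(g)^{-1}(X)$ is an open neighborhood of $y$ (using that $X$ is open in $Y$ and $\beta(g)$ is a homeomorphism between open subsets), so the domains of candidate charts cover $Y$.

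The core verification is that any two such candidate charts $\phi_i=f_i\circ\beta(g_i)|_{V_i}$ ($i=1,2$) are $\hat S$-compatible. On the intersection of their domains,
\[
\phi_2\circ\phi_1^{-1} = f_2\circ\bigl(\beta(g_2)\circ\beta(g_1)^{-1}\bigr)\circ f_1^{-1}.
\]
For a point $x$ in the relevant subset of $X$, we have $y:=\beta(g_1)^{-1}x\in V_1\cap V_2\subset D_{\beta(g_2)}$, so the inner composition is defined at $x$ and, by the partial-action axiom, coincides with $\beta(g_2g_1^{-1})x$; moreover this value lies in $X$. Since both endpoints belong to $X$, fullness identifies the inner map with the restriction of $\alpha(g_2g_1^{-1})$ to an open subset of $X$, which belongs to $\mathcal{I}_S(X)$ because $\alpha$ is $S$-preserving. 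Sandwiching between $f_1^{-1}$ and $f_2$ thus yields an element of $\hat S$. Taking $g=1$ in the construction recovers the original charts, so $\HHH\subset \HHH_Y$; and, again by $S$-preservation of $\alpha$, every chart of $\HHH_Y$ with both source and target in $X$ reduces to an element of $\HHH$, so the induced structure on $X$ is unchanged.

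The partial action $\beta$ is $S$-preserving for $\HHH_Y$ essentially by construction: precomposing $\phi=f\circ\beta(g')$ with $\beta(g)$ yields $f\circ\beta(g'g)$ on a suitable open subset, which is again a candidate chart. Uniqueness: any complete $S$-atlas on $Y$ extending $\HHH$ that makes $\beta$ $S$-preserving must be stable under precomposition of its charts by each $\beta(g)$, hence must contain all of the generating candidate charts, hence must contain (and by completeness coincide with) $\HHH_Y$.

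The main obstacle is the compatibility step. The inclusion $\beta(g_2)\circ\beta(g_1)^{-1}\subset\beta(g_2g_1^{-1})$ is only one-sided, and the transition map a priori involves portions of $\beta(g_2g_1^{-1})$ that need not preserve $X$ or $S$. It is the fullness hypothesis — not merely $G$-equivariance — that lets one identify the relevant restriction with the $S$-preserving map $\alpha(g_2g_1^{-1})$ and thereby land in $\hat S$; without fullness the construction collapses.
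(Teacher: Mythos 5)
Your proposal is correct and follows essentially the same route as the paper: charts on $Y$ are obtained by pushing points into $X$ via the partial action (using essentiality and openness for covering), compatibility of charts reduces via the partial-action axiom and \emph{fullness} to the $S$-preservation of the original partial action on $X$, and uniqueness follows because any admissible structure must be stable under precomposition by the $\beta(g)$. The only cosmetic difference is that the paper builds a $\PB_S(X)$-structure (charts valued in $X$) and lets it induce the $S$-structure by composition, whereas you compose directly with the $A$-valued charts of $X$.
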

\begin{proof}
View $X\to Y$ as an open inclusion, and denote by $\alpha$ the partial action on $Y$, and $\beta$ the partial action on $X$. For every $y\in Y$, the inclusion being essential, there exists $g\in G$ such that $\alpha(g)y\in X$. Since $X$ and the $D_{\alpha(g)}$ are open and $\alpha(g)$ is continuous on its domain, there exists ($\ast$) an open neighborhood $U$ of $y$ included in $D_{\alpha(g)}$ such that $\alpha(g)U\subset X$. Given ($\ast$), the uniqueness follows.
 
For the existence, it is enough to endow $Y$ with a structure on the pseudogroup $\PB_S(X)$. Namely, define an atlas where the charts are indexed by the pairs $(g,U)$, where $g$ ranges over $G$ and $U$ among open subsets of $Y$ such that $\alpha(g)U\subset X$. Such a chart $\phi_{g,U}$ has domain $U$ and is simply given by $\alpha(g)|_U$. That the domain of charts cover $Y$ follows from ($\ast$). 

Now let us check the compatibility condition in the definition of an atlas. Consider two charts $\phi_{g,U}$ and $\phi_{h,V}$: we have to show that $\phi_{h,V}\circ\phi_{g,U}^{-1}$ belongs to $\PB_S(X)$. Define $W=\alpha(g)U\cap\alpha(h)V$. Define $U'=\alpha(g)^{-1}(W)\subset U$ and $V'=\alpha(h)^{-1}(W)\subset V$. Then, adding subscripts to restrict the domain and range of the given partial bijections, we have
\[\phi_{h,V}\circ\phi_{g,U}^{-1}=\phi_{h,V'}\circ\phi_{g,U'}^{-1}=\big(\alpha(h)_{V'\to W}\big)\circ\big(\alpha(g)^{-1}_{U'\to W}\big)=\alpha(h\circ g^{-1})_{W\to W}.\]
The first equality just uses the definition of composition of partial maps; the second is just the definition, and the third follows from the definition of partial maps and the definition of partial action. Now it follows from the inclusion $X\to Y$ being full that $\alpha(h\circ g^{-1})_{W\to W}$ is equal to $\beta(h\circ g^{-1})_{W\to W}$. Therefore, the change of charts $\alpha(h\circ g^{-1})_{W\to W}$ belongs to $\PB_S(X)$. This proves that we have an atlas.

Let us finally check that the partial action $\alpha$ is $S$-preserving. Indeed, since this is a local condition, we can check for a given $g$ at given points $y,y'=\alpha(g)y$; we can choose an open neighborhood $U$ of $y$ which is the domain of a chart $\phi_{U,h}$, and such that $V=\alpha(g)U$ is domain of a chart $\phi_{V,k}$. Then, on $h(U)$, the composition $\phi_{V,k}\circ(\alpha(g)_{U\to V})\circ\phi_{U,h}^{-1}$ (which describes $\alpha(g)_{U\to V}$ in charts) equals $\alpha(kgh^{-1})_{h(U)\to k(V)}$ which preserves $S$. Hence $\alpha(g)$ is $S$-preserving at the neighborhood of $y$. 
\end{proof}

\begin{cor}\label{glostru}
Given a group $G$ with a topological $S$-preserving partial action on an $S$-modeled topological space $X$, there exists a unique $G$-invariant $S$-structure on the universal globalization $\hat{X}$ extending the original $S$-structure on $X$.
\end{cor}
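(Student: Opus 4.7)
The plan is to apply Proposition~\ref{exstru} directly to the universal globalization map $\iota\colon X\to\hat{X}$. The three preceding results --- Theorem~\ref{tunivglo}, Theorem~\ref{tunivglot}, and Proposition~\ref{exstru} --- are designed to fit together precisely for this deduction, so the task reduces to checking that the hypotheses of Proposition~\ref{exstru} are satisfied by $\iota$, and then reading off what its conclusion says.

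First, Theorem~\ref{tunivglo}(1) provides injectivity of $\iota$, and Theorem~\ref{tunivglo}(2) states that every $\hat{\alpha}(G)$-orbit in $\hat{X}$ meets $\iota(X)$, which is essentiality. Theorem~\ref{tunivglot} endows $\hat{X}$ with a topology making $\iota$ an open continuous map and making $\hat{X}$ a topological $G$-space for the induced global action $\hat{\alpha}$. The remaining condition to verify is fullness: whenever $\hat{\alpha}(g)\iota(x)=\iota(y)$ for some $x,y\in X$ and $g\in G$, we must have $x\in D_{\alpha(g)}$ and $\alpha(g)x=y$. This is read off from the explicit construction of $\hat{X}$ recalled after Theorem~\ref{tunivglo}: the underlying set is $(G\times X)/{\sim}$ with $(h,x)\sim(k,y)$ iff $x\in D_{k^{-1}h}$ and $\alpha(k^{-1}h)x=y$, the map $\iota$ is $x\mapsto[(1,x)]$, and $\hat{\alpha}(g)[(h,x)]=[(gh,x)]$. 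The equality $[(g,x)]=[(1,y)]$ therefore forces $x\in D_{\alpha(g)}$ and $\alpha(g)x=y$, which is fullness. (Alternatively, one may cite Lemma~\ref{reglo} as an indirect characterization, but the direct inspection is shorter.)

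With all four hypotheses (injective, open, full, essential) in hand, Proposition~\ref{exstru} yields a unique $S$-structure on $\hat{X}$ extending the given $S$-structure on $X$ such that the $G$-action on $\hat{X}$ is $S$-preserving. Since this $G$-action is global, being $S$-preserving is exactly the same as $G$-invariance of the $S$-structure, so this is the structure required by the corollary. The uniqueness clause in the corollary (among $G$-invariant extensions) transfers verbatim from Proposition~\ref{exstru}.

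I do not anticipate any genuine obstacle; the construction has been entirely packaged in the previous lemmas. The only mild subtlety is to notice that on the globalization side, the nominally weaker ``$S$-preserving partial action'' hypothesis of Proposition~\ref{exstru} upgrades to full $G$-invariance precisely because the induced action on $\hat{X}$ is global rather than merely partial.
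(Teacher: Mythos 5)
Your proof is correct and is exactly the route the paper intends: the corollary is stated without proof precisely because it is the direct application of Proposition~\ref{exstru} to $\iota\colon X\to\hat{X}$, with injectivity and essentiality supplied by Theorem~\ref{tunivglo}, openness by Theorem~\ref{tunivglot}, and fullness read off the explicit construction of $\hat{X}$. (One small aside: Lemma~\ref{reglo} is the converse implication --- full and injective into a global action implies universal globalization --- so it would not by itself yield fullness of $\iota$; your direct verification via the equivalence relation is the right move.)
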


\begin{rem}\label{r_hausdorff}
Beware that even if $X$ is Hausdorff, $\hat{X}$ is often far from Hausdorff: indeed the construction of $\hat{X}$ typically glues copies of open subsets of $X$ along open intersections. 

For this reason, and because of our extensive use of Corollary \ref{glostru}, discussions about Hausdorffness of spaces are important unavoidable issues: even if the ultimate goal is to deal with Hausdorff spaces and produce Hausdorff spaces, we have to accept the presence of non-Hausdorff spaces among our tools.
\end{rem}

\subsubsection{Further remarks}

The following lemma complements Lemma \ref{bousch} and is immediate.
\begin{lem}\label{bousch_s}
Let $X$ is modeled over a pseudogroup $S$ on $\R/\Z$. Then the natural homomorphism $\Phi:\PC(X)\to\PC^+(X^\pm)$ induces an isomorphism from $\PC_S(X)$ onto the centralizer of $\sigma$ in $\PC_S^+(X^\pm)$.\qed 
\end{lem}

\begin{rem}
If $A=\R/\Z$ and $S$ is one of the pseudogroups $\mathbf{Isom}$, $\mathbf{Aff}$, $\mathbf{Proj}$, the whole pseudogroup of local homeomorphisms, or one of their oriented counterparts, then for every $S$-modeled topological space $X$, we have $\PC_S(X)=\PC_{S\sharp}(X)$.

Continuing with $\R/\Z$, examples of $S$ for which $\PC_S(X)\neq\PC_{S\sharp}(X)$ are the pseudogroup of partial $\mathcal{C}^k$-diffeomorphisms for $k>0$ (or $k=\infty,\omega$), the pseudogroups $\mathbf{Aff}^\wp$ and $\mathbf{Proj}^\wp$ of piecewise affine/projective local homeomorphisms.
\end{rem}

\begin{rem}%
Even when $\PC(X)=\mathrm{Homeo}(X)$ (as in Remark \ref{r_homeo_pc}), we can have $\PC_S(X)\neq\PC_S^0(X)$. This is for instance the case when $S$ is the pseudogroup of $C^k$-diffeomorphisms on $\R^d$ for $d\ge 2$ and $k>0$.

In higher dimension, of course there are other natural interesting ways of considering ``piecewise" properties, which allow infinite subsets (typically ``codimension 1") subsets of ``singular points". Such a study goes beyond the scope of this paper.
\end{rem}

\section{Non-distortion phenomena}\label{s_nondile}
This shorter section is independent of the next ones, and only referred to in the non-distortion corollaries. Indeed, while commensurated actions typically allow to prove that distorted elements preserve some given geometric structure, an additional step is necessary to understand when such automorphisms are distorted within the whole piecewise group.

We indicate a way to systematically tackle such problems, with a limited technical cost, thanks to the language of pseudogroups. 

Let $S$ be a pseudogroup on a topological space $X$. Whenever we refer to $sx$ for $(s,x)\in S\times X$, it is understood that we mean ``(such that) $x$ belongs to the domain $D_s$ of $s$". We freely consider $s$ as a subset of $X^2$.

Given open subsets $Y,Z$ of $X$, we denote by $S_{Y,Z}$ the set of those $s\in S$ that are included in $Y\times Z$. Write $S_Y=S_{Y,Y}$ (so $S=S_X$); note that $S_Y$ is a pseudogroup on $Y$, and is an inverse subsemigroup of $\PB(X)$.

Given a subset $T\subset\PB(X)$, define a graph structure on $X$, with one edge $(x,gx)$ for all $g\in T$ and $x\in X$. Let $d_T$ be the corresponding graph ``distance" (allowing the value $\infty$) on $X$. Note that $d_{T}=d_{T\cup T^{-1}}$, so it is generally no restriction to assume $T$ symmetric.

\begin{lem}\label{third}
Let $X$ be a set (viewed as discrete topological space) with a subset $Y$. Let $S$ be a pseudogroup on $X$, with $\mathrm{id}_Y\in S$.

Let $T$ be a subset of $S$ and $K$ a subset of $S_{X,Y}$, such that $\bigcup_{f\in K}D_f=X$. Define $T'=KTK^{-1}$ (so $T'$ is included in $S_Y$). Then for all $y,y'\in Y$ we have $d_{T'}(y,y')\le 3d_T(y,y')$. 
\end{lem}
Note that assuming $\mathrm{id}_Y\in K$ yields $d_{T}(y,y')\le d_{T'}(y,y')$; the interest of Lemma \ref{third} is to provide an inequality in the reverse direction. This can be thought as a non-distortion property: we can replace a path of size $n$ (in $X$) with a path of size $3n$ (within $Y$).

\begin{proof}
Since $T'\cup {T'}^{-1}\subset K(T\cup T^{-1})K^{-1}$, we can suppose that $T$ is symmetric. Consider $y,y'\in Y$ with $d_T(y,y')=n$, so we can write $y'=s_n\dots s_1y$ with $s_i\in T$. Write $x_j=s_j\dots s_1y\in X$. For each $j$, there exists $k=k_j\in K$ such that $x_j\in D_{k_j}$. Then 
\[y'=(s_nk_{n-1}^{-1})(k_{n-1}s_{n-1}k_{n-2}^{-1})\dots(k_2s_2k_1^{-1})(k_1s_1)y=\tau_n\dots\tau_1y,\] 
with $\tau_i=k_is_ik_{i-1}^{-1}\in T'$ (and $k_0,k_n=\mathrm{id}_Y$). Then $\tau_j\dots\tau_1y\in Y$ for all $j$. Hence $d_{T'}(y,y')\le 3n$.
\end{proof}

Let $X$ be a standard curve. Define a small interval in $X$ as a subset $I$ either empty or homeomorphic to an open interval, with the additional requirement that if $I$ is included in a circle component of length $a$, then the length of $I$ is $\le a/2$. This ensures that the intersection of any two small intervals is a small interval.
Define the pseudogroup of isometries $S$ as consisting of those isometries between two small intervals of $X$.

\begin{lem}\label{ineqiet}
Let $X$ be a standard curve, with the above pseudogroup $S$. Let $Y$ be a circle component of $X$.
For $z\in Y^\pm$, let $Q_z$ be the set of $\sigma\in\PC_S(X)$ such that $\sigma(z)\in Y^\pm$. For $\sigma\in Q_z$, define $h=g_z(\sigma)$ as the unique isometry $h$ of $Y$ such that $h(z)=\sigma(z)$.

Then for every finite subset $W$ of $\PC_S(X)$, there exists a finite subset $W'$ of $\mathrm{Isom}(Y)$ such that for every $z\in Y^\pm$ and every $\sigma\in Q_z$, we have $|\sigma|_W\ge\frac13|g_z(\sigma)|_{W'}$.
\end{lem}
\begin{proof}
Let $W$ be a finite subset of $\PC_S(X)$. Let $T$ be a finite subset of $S$ such that every element of $W$ is the union of finitely many elements of $T$.
 Let $K$ a finite subset of $S$ such that the range of every $k\in K$ is included in $Y$, and the domains of $k\in K$ cover $X$. Set $T'=KTK^{-1}\subset S_Y$. For $t\in S_Y\smallsetminus\{\emptyset\}$, define $\Psi(t)$ as the unique self-isometry of $Y$ extending $t$; define $W'=\Psi(T')$. Then, for every $z\in Y^\pm$ and $\sigma\in Q_z$, we have 
 \[|\sigma|_W\ge d_T(z,\sigma z)\ge\frac13d_{T'}(z,\sigma z))=\frac13|g_z(\sigma)|_{W'}.\]
Let us justify each of the (in)equalities above. The middle inequality is provided by Lemma \ref{third}. The left-hand inequality follows from the case when $\sigma\in W$, in which case it holds by definition of $T$. For the right-hand equality, the inequality $\le$ is easy and not needed, so let us only justify $\ge$. Indeed, suppose that $d_{T'}(z,\sigma z)=n$. Then we can write $\sigma(z)=t_n\dots t_1 z$ with $t_i\in T'$. Write $\tau_i=t_i\dots t_1$ and $z_i=\tau_i z$. So 
\[g_z(\tau_i)z=z_i=t_i\tau_{i-1}z=\Psi(t_i)g_z(\tau_{i-1})z.\]
Thus $g_z(\sigma)z=\Psi(t_n)\dots\Psi(t_1)z$, and hence, again using that the isometry group of $Y$ acts freely on $Y$, we deduce $g_z(\sigma)=\Psi(t_n)\dots\Psi(t_1)$. Hence $|g_z(\sigma)|_{W'}\le n$.
\end{proof}

\begin{prop}\label{undiiet}
Let $X$ be a standard curve and $Z$ a clopen subset of $X$. Then $\mathrm{Isom}(Z)$ is undistorted in $\PC_S(X)=\IETbw(X)$. More generally, 
let $\Gamma$ be any subgroup of $\mathrm{Isom}(Z)$ and homomorphism $q=\Gamma\to\IETbw(X\smallsetminus Z)$, and denote by $\Gamma_q$ the image of $\Gamma$ in $\IETbw(X)$ by the homomorphism $\mathrm{id}\times q$. Then $\Gamma_q$ is undistorted in $\IETbw(X)$. 
\end{prop}
\begin{proof}
We can suppose, passing to a subgroup of finite index, that $\Gamma$ is included in the unit component $\Isom(Z)^\circ$. In particular, $\Gamma$ preserves each component of $Z$, and acts trivially on any component of $Z$ that is not a topological circle. Let $Y_1,\dots,Y_k$ be the circle components of $Z$; we can view $\Isom(Z)^\circ$ as the product $\prod_j\Isom(Y_j)^\circ$. Fix $z_j\in Y_j$.

Let $W$ be a finite subset of $\PC_S(X)$. For each $j$, apply Lemma \ref{ineqiet}, outputting a finite subset $W_j$ of $\mathrm{Isom}(Y_j)$; we can suppose that $W_j$ is symmetric and contains 1. For $\gamma\in\Gamma$, let $\gamma_q$ be its image in $\IETbw(X)$; for each $j$ let $\gamma_j\in\Isom(Y_j)$ be its restriction to $Y_j$. Then $\gamma_q\in Q_z$ and hence Lemma \ref{ineqiet} says that
$|\gamma_q|_W\ge\frac13|g_{z_j}(\gamma_q)|_{W_j}$. Since $g_{z_j}(\gamma_q)=\gamma_j$, this yields $|\gamma_q|_W\ge\frac13|\gamma_j|_{W_j}$. Write $W'=\prod_j(\{1\}\cup W_j)$; then for every $\gamma$ we have $|\gamma|_{W'}=\sup_j|\gamma_j|_{W_j}$. Hence we deduce $|\gamma_q|_W\ge\frac13|\gamma|_{W'}$. If $\Gamma=\mathrm{Isom}(Y)$, this gives the non-distortion result. In general, this follows by using that in a virtually abelian group, all subgroups are undistorted: precisely, the non-distortion ensures that there exists a finite subset $W''$ of $\Gamma$ and $C>0$ such that for all $g\in\Gamma$, we have $|g|_{W'}\ge C|g|_{W''}$.
\end{proof}

\begin{cor}[Novak \cite{Nov}]
Let $X$ be a standard curve and $f$ an self-isometry of $X$ of infinite order. Then $f$ is undistorted in $\IETbw(X)$.\qed
\end{cor}

This is, modulo the formulation, due to Novak in the context of (piecewise orientation-preserving) interval exchanges. Precisely, Novak's proof that $\IET$ has no distorted cyclic subgroups consists, in a first step, in showing that every distorted element has an isometric model and then the next step is to prove (by hand: \cite[\S 4]{Nov}) a result akin to the previous corollary. Using Proposition \ref{undiiet}, Novak's result is improved in Corollary \ref{abiet}.

We now consider affinely modeled curves. They are defined here as $\mathbf{Aff}_{\R/\Z}$-modeled curves; in particular finitely-charted is meant in the definition of \S\ref{si_pseudo}: it just means that no component is affinely isomorphic to an unbounded interval of $\R$. Thus (see Appendix \ref{s_projcur}), every component is affinely isomorphic to:
\begin{itemize}
\item a bounded interval (which is isomorphic to $\mathopen]-1,1\mathclose[$), or
\item the standard circle $\R/\Z$, or
\item a non-standard circle $\R_{>0}/\langle s\rangle$ for some (unique) $s>1$.
\end{itemize}
A bounded interval has a trivial $\mathbf{Aff}^+$ automorphism group. In the case of circles, which are given here as topological groups, in each case the $\mathbf{Aff}^+$ automorphism group consists of the left-translations (additive in the standard case, multiplicative in the non-standard case).

\begin{lem}\label{affine_nondis}
Let $X$ be a finitely-charted affinely modeled curve with a clopen subset $Y$. Let $Y_1,\dots,Y_m$ be the distinct components of $Y$, each of which being isomorphic to a non-standard circle, and fix $y_j\in Y_j$ for each $j$.

Let $T$ a finite subset of the pseudogroup $S$ of affine isomorphisms between open intervals in $X$. Then there exists a finite subset $W$ of $\Aut_{\mathbf{Aff}}(X)^\circ$ such that for every $\sigma\in\Aut_{\mathbf{Aff}}(X)^\circ$ we have 
\[|\sigma|_W\le \sum_{j=1}^md_T(y_j,\sigma y_j).\].
\end{lem}
\begin{proof}
We choose coordinates, so that $X$ is identified to a suitable finite disjoint union of intervals, where $Y_j$, which is isomorphic to $\R_{>0}/\langle s_j\rangle$ for some $s_j>1$, corresponds to the interval $[c_j,s_jc_j\mathclose[$, the identification being affine outside the discontinuity point. We can choose them so that the $T$-orbit of $y_j$ does not contain $c_j$. For $\nu>0$, let $r_\nu^{(j)}$ be the multiplicative rotation of $Y_j$ given by multiplication by $\nu$: in the given coordinates, we still (by abuse) denote it as $r_\nu^{(j)}$. If $\nu\in [1,s_j\mathclose[$ (which we can always suppose), it is explicitly given by: for $y\in [c_j, s_jc_j/\nu\mathclose[$ we have $r_\nu^{(j)}(y)=\nu y$ while for $y\in [s_jc_j/\nu,s_jc_j\mathclose[$, we have $r_\nu^{(j)}(y)=(\nu/s_j) y$. Let $M\subset\R^*$ be the set of slopes of elements of $T$ in these coordinates; this is a finite subset. Define $W=\{r_\nu^{(j)}:\nu\in M,1\le j\le m\}$.

We start from the observation that any affine automorphism of $Y_j$ is determined by its slope at $y_j$ in these coordinates. Indeed, for $\nu\in [1,s_j\mathclose[$, if $\nu<s_jc_j/y_j$, then the slope of $r_\nu^{(j)}$ at $y_j$ is $\nu\ge 1$; if $\nu=s_jc_j/y_j$, then $r_\nu^{(j)}$ is discontinuous at $y_j$; if $\nu>s_jc_j/y_j$, then the slope of $r_\nu^{(j)}$ at $y_j$ is $\nu/s_j<1$. 

Fix $j$. Define $n_j=d_T(y_j,\sigma y_j)$. Write $\sigma y_j=t_{n_j}^{(j)}\dots t_1^{(j)} y_j$. Let $a_i^{(j)}$ be the slope of $t_i^{(j)}$ at $t_{i-1}^{(j)}\dots t_1^{(j)}y_j$. Write $a^{(j)}=a_{n_j}^{(j)}\dots a_1^{(j)}$. Then the slope of $\sigma$ at $y_j$ is $a^{(j)}$. Since $\sigma$ is determined on $Y_j$ by its slope at $y_j$, we deduce that $\sigma=r_{a^{(j)}}$ on $Y_j$.
Hence
\[\sigma=\prod_{j=1}^mr_{a^{(j)}}=\prod_j\prod_{i=1}^{n_j}r_{a_i^{(j)}},\]
so 
\[|\sigma|_W\le \sum_{j=1}^m\sum_{i=1}^{n_j}|r_{a_i^{(j)}}|_W\le \sum_{j=1}^m\sum_{i=1}^{n_j}1=\sum_{j=1}^mn_j=\sum_jd_T(y_j,\sigma y_j).\qedhere\]
\end{proof}

\begin{cor}\label{cor_affundist}
Let $X$ be a finitely-charted affinely modeled curve. Let $Z$ be a clopen subset of $X$ such that no component of $Z$ is affinely isomorphic to a standard curve.
Then $\Aut_{\mathbf{Aff}}(Z)$ is undistorted in $\PC_{\mathbf{Aff}}(X)$.
\end{cor}
\begin{proof}
It is enough to show that $\Aut(Z)^\circ$ is undistorted. Hence, we can suppose that $Z$ has no interval component, so each of its components is a non-standard circle.

Let $W'$ be a finite subset of $\PC_{\mathbf{Aff}}(X)$. Let $T$ be the set of partial affine isomorphisms extracted from $W'$. Let $W$ be given by Lemma \ref{affine_nondis}. Then for every $\sigma\in \PC_{\mathbf{Aff}}(X)$ 
\[|\sigma|_W\le \sum_{j=1}^md_T(y_j,\sigma y_j)\le m|\sigma|_{W'}.\qedhere\]
\end{proof}

Specifying to the subgroup of continuous elements, and to cyclic subgroups,  this yields the following particular case which, modulo the formulation, is due to Guelman-Liousse \cite[\S 7]{GuL}.

\begin{cor}[Guelman-Liousse]\label{affinondi}
Let $X$ be a finitely-charted affinely modeled curve. 
Let $Y$ a component of $X$ that is a nonstandard circle, i.e., $Y\simeq\R_{>0}/\langle s\rangle$ for some $s>1$. Then every cyclic subgroup of $\Aut_{\mathbf{Aff}}(Y)$ is undistorted in $\PC^0_{\mathbf{Aff}}(X)$.
\end{cor}

The robustness of the method allows to apply it in some other cases. For instance, a related argument can show that irrational rotations of non-standard circles are undistorted in the group of piecewise projective self-transformations. The proof is a little more difficult: the basic idea is to use non-distortion of homotheties in $\PSL_2(\R)$ rather than in an abelian group. Actually, this should be performed in a systematic study of distortion in groups of piecewise projective self-transformations.

It is not known if Corollary \ref{cor_affundist} holds when standard circles are allowed in $Z$; see Corollary \ref{eqque}.



\section{The main theorem and applications}
\subsection{Powers and the first corollaries}
 
We first use the interpretation of piecewise/parcelwise actions as partial actions to provide counting results for the number of ``singularities" of various maps, in various senses. 
 
\subsubsection{Using the partial action $\alpha$}
 
Let $X$ be a Hausdorff topological space with no isolated point. Proposition \ref{semic} applies: $\PC(X)$ has a canonical partial action $\alpha$ on $X$. Call the finite complement of the domain of definition of $\sigma\in\PC(X)$ its domain of indeterminacy. It contains the subset of outer discontinuity points of $\sigma$, defined as the complement of the set of outer continuity points $x$ of $\sigma$. If in addition $X$ is locally saturated (e.g., a topological manifold of pure dimension), Proposition \ref{paroutcon} applies and these two finite subsets coincide for every $\sigma$.

When $X$ is an oriented 1-dimensional manifold, not necessarily connected, outer discontinuity points of $\sigma$ are the same as discontinuity points of the unique left-continuous representative of $\sigma$. (Beware that mapping $\sigma$ to its unique left-continuous representative of $\sigma$ is not a monoid homomorphism; yet it is a monoid homomorphism in restriction to piecewise orientation-preserving elements).

\begin{cor}\label{semiprod}
Let $X$ be a Hausdorff topological space with no isolated point.
Consider the cofinite-partial action of $\PC(X)$ on $X$.
\begin{enumerate}
\item the prodigal semi-index function $\ell^-$ of this partial action (as defined in \S\ref{encocp}), and hence of its restriction to any subgroup, coincides with the function mapping $\sigma\in\PC(X)$ to the number of indeterminacy points of $\sigma$;
\item\label{trxiff} Let $G$ be a group with a homomorphism $\alpha:G\to\PC(X)$. Then 
$X$ is transfixed by $\alpha(G)$ (as defined in \S\ref{encocp}) if and only if the number of indeterminacy points of $\alpha(g)$ is bounded independently of $g\in G$;
\item\label{semiproi2} for every $\sigma\in\PC(X)$ there exists $m_\sigma\in\N$ and a bounded function $b:\N\to\N$ such that $\ell^-(\sigma^n)=m_\sigma n+b(n)$ for all $n\in\N$. 
\end{enumerate}
\end{cor}
\begin{proof}
The first fact is immediate and the second immediately follows. Using the universal globalization (Theorem \ref{tunivglo}), the prodigal semi-index function $\ell^-$ of this partial action is the prodigal semi-index action of some commensurating action (namely on the universal globalization of $X$, commensurating $X$). Hence (\ref{semiproi2}) follows from Proposition~\ref{cardefz}.
\end{proof}

This has the following addendum (for which we did not attempt to find optimal hypotheses):

\begin{cor}\label{growthdisc}
In the setting of Corollary \ref{semiprod}(\ref{semiproi2}), assume that $X$ is a topological manifold with no boundary and finitely many ends. Then $\ell^-(\sigma)=\ell^-(\sigma^{-1})$ for all $\sigma$. In particular, we have $m_\sigma=m_{\sigma^{-1}}$ for all $\sigma$. In other words, for every $\sigma\in\PC(X)$ there exists $m_\sigma\in\N$ and an even bounded function $b:\Z\to\N$ such that $\ell^-(\sigma^n)=m_\sigma |n|+b(n)$ for all $n\in\Z$. 
\end{cor}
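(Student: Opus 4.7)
The plan is to first establish the key equality $\ell^-(\sigma) = \ell^-(\sigma^{-1})$ for every $\sigma \in \PC(X)$; the remaining claims will then be routine. Indeed, applying this equality to each power $\sigma^n$ together with Corollary \ref{semiprod}(\ref{semiproi2}) gives both $\ell^-(\sigma^n) = m_\sigma n + b(n)$ and $\ell^-(\sigma^{-n}) = m_{\sigma^{-1}} n + \tilde{b}(n)$ for $n \ge 0$ with equal left-hand sides; the uniqueness in Proposition \ref{cardefz}(1) then forces $m_\sigma = m_{\sigma^{-1}}$, and extending $b$ evenly by $b(-n) = b(n)$ for $n \ge 0$ produces the required even bounded non-negative function on $\Z$.

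For the equality itself, by Proposition \ref{semic} the partial bijection $\alpha(\sigma)$ lies in the topological inverse semigroup $\mathcal{I}(X)$, so it is a homeomorphism from its (cofinite open) domain $U = X \setminus F$ onto its (cofinite open) range $V = X \setminus F'$, where $F$ and $F'$ are the finite indeterminacy sets of $\sigma$ and $\sigma^{-1}$, with $|F| = \ell^-(\sigma)$ and $|F'| = \ell^-(\sigma^{-1})$ by Corollary \ref{semiprod}(1). The problem thus reduces to a purely topological assertion: if $X$ is a topological manifold without boundary and with finitely many ends, and $U, V \subset X$ are cofinite open subsets that are homeomorphic, then $|X \setminus U| = |X \setminus V|$.

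I settle this by a Freudenthal end-counting computation. Since $X$ is a manifold without boundary and without isolated point, every $x \in X$ has a neighborhood homeomorphic to $\R^{n(x)}$ with $n(x) \ge 1$, and a small punctured neighborhood has $c_{n(x)}$ connected components, where $c_1 = 2$ and $c_n = 1$ for $n \ge 2$. The space $X$ is locally compact Hausdorff with finitely many ends, and $F$ is finite, so standard end theory yields
$$e(X \setminus F) \;=\; e(X) \;+\; \sum_{x \in F} c_{n(x)},$$
obtained by splitting the ends of $X \setminus F$ into the $e(X)$ ``global'' ends inherited from $X$ (seen via complements of large compacta containing $F$) and the $c_{n(x)}$ local ends produced by each puncture $x \in F$ (seen via shrinking punctured neighborhoods).

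The last obstacle --- that $c_{n(x)}$ depends on the local dimension at $x$ --- is removed by the dimension stratification. Let $X^{(k)}$ denote the union of components of $X$ of pure dimension $k$: it is clopen in $X$, is itself a manifold with finitely many ends, and is preserved by any self-homeomorphism since homeomorphisms preserve local dimension. Consequently $\alpha(\sigma)$ restricts to a homeomorphism $X^{(k)} \setminus F_k \to X^{(k)} \setminus F'_k$ with $F_k = F \cap X^{(k)}$ and $F'_k = F' \cap X^{(k)}$. Applying the displayed formula separately inside each $X^{(k)}$ yields $c_k |F_k| = e(X^{(k)} \setminus F_k) - e(X^{(k)}) = e(X^{(k)} \setminus F'_k) - e(X^{(k)}) = c_k |F'_k|$, hence $|F_k| = |F'_k|$ for every $k \ge 1$; summing over $k$ gives $|F| = |F'|$. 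The main delicate step is the end-counting formula itself: one must verify rigorously that the two sources of ends described above are exhaustive and disjoint, which uses local compactness and the finiteness of both $e(X)$ and $F$.
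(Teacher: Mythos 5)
Your proof is correct and follows essentially the same route as the paper: reduce to the statement that homeomorphic cofinite open subsets of $X$ have complements of equal cardinality, and verify this by counting the ends created by punctures ($2$ per point in dimension $1$, $1$ per point in dimension $\ge 2$). Your stratification by local dimension makes explicit the variable-dimension case that the paper dismisses with ``immediately follows,'' but the underlying argument is identical.
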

\begin{proof}
This reflects the fact that domains of definition of $\sigma$ and $\sigma^{-1}$ have complements of the same cardinal. In turn, this follows from the fact that the complement of $m$ and $m'$ points in $X$ are homeomorphic only if $m=m'$. Let us check the latter assertion.

First suppose that $X$ has constant dimension. Write $\theta=2$ if $\dim(X)=1$ and $\theta=1$ if $\dim(X)\ge 2$. Let $k$ be the number of ends of $X$. Then the number of ends of $X$ minus $m$ points is $k+\theta n$. This number retains $m$, when $X$ is given. (If $X$ has dimension 0, the condition of having finitely many ends means that $X$ is finite and the result holds too.) The case when $X$ has variable dimension immediately follows.
\end{proof}

\begin{rem}
In the case when $X=\R/\Z$ and in the context of interval exchanges,
Corollary \ref{growthdisc}, was essentially established independently in \cite[Prop.\ 2.3]{Nov} and \cite[Corollary 2.5]{DFG}. Although not stated, the behavior of the form $n\mapsto kn+O(1)$ for $n\to+\infty$ with $k\in\N$ is established explicitly in \cite{DFG} and follows from the proof in \cite{Nov}.

The symmetry established in Corollary \ref{growthdisc}, as well as the non-negativity of $b$ seem to be (minor) new observations. The generalization to $\PC(\R/\Z)$ is significant; however it seems that the methods used in both references can be applied with minor changes, at least in the piecewise orientation-preserving case.
\end{rem}

\subsubsection{Using the partial action $\alpha_S$}

We now use the partial action $\alpha_S$ introduced in \S\ref{s_alphas} to obtain a result of the same flavor as Corollary \ref{semiprod}. For an element $g$ of $\PC(X)$, we call points of $S$-indeterminacy of $\alpha_S(g)$ the elements outside its domain of definition. 

\begin{cor}\label{semiprod2}
Let $X$ be a Hausdorff topological space with no isolated point.
Consider the cofinite-partial action $\alpha_S$ of $\PC_S(X)$ on $X$.
\begin{enumerate}
\item the prodigal semi-index function $\ell_S^-(\sigma)$ of this partial action (and hence of its restriction to any subgroup) coincides with the function mapping $\sigma\in\PC(X)$ to the number of $S$-indeterminacy points of $\sigma$;
\item\label{semiproi22} for every $\sigma\in\PC_S(X)$ there exists $m_{S,\sigma}\in\N$ and a bounded function $b:\N\to\N$ such that $\ell^-(\sigma^n)=m_{S,\sigma} n+b(n)$ for all $n\in\N$. 
\item Let $G$ be a group with a homomorphism into $\PC_S(X)$. Then 
$X$ is transfixed by $G$ if and only if the number of $S$-indeterminacy points of $g$ is bounded independently of $g\in G$;
\end{enumerate}
Moreover, if $X$ is a topological manifold with no boundary and finitely many ends, then $\ell_S^-(\sigma)=\ell_S^-(\sigma^{-1})$ and $m_{S,\sigma}=m_{S,\sigma^{-1}}$ for all $\sigma\in\PC_S(X)$.
\end{cor}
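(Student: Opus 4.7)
The plan is to mirror the proof of Corollary~\ref{semiprod}, substituting $\alpha_S$ for $\alpha$ throughout and checking that every step carries over. Part~(1) is tautological: by the construction in \S\ref{alphas}, $X \smallsetminus D_{\alpha_S(\sigma)}$ is by definition the set of $S$-indeterminacy points of $\sigma$, and its cardinal is $\ell_S^-(\sigma)$. Since $\PC_S(X)$ is built as a quotient of $\ICO_S(X)$, whose elements have cofinite domain, $\alpha_S$ takes values in $\ICO(X)$ (Lemma~\ref{lseco} assures that the fiber-unions are still partial bijections), so it is a cofinite-partial action and this cardinal is finite.

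Part~(2) follows by restricting $\alpha_S$ to the cyclic subgroup $\langle\sigma\rangle$: via the universal globalization (Theorem~\ref{tunivglo}) this yields a commensurating action of $\Z$ whose prodigal semi-index at $\sigma^n$ is $\ell_S^-(\sigma^n)$, and Proposition~\ref{cardefz}(2) then supplies the decomposition $\ell_S^-(\sigma^n) = m_{S,\sigma} n + b(n)$ with $m_{S,\sigma}\in\N$ and $b$ bounded non-negative. Part~(3) is a direct rephrasing: transfixing $X$ under $\alpha_S$ means by definition that $\ell_S^-$ is bounded on $G$, which by~(1) is the boundedness of the number of $S$-indeterminacy points.

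For the final assertion, use that $\alpha_S$ is a partial action, so $D'_{\alpha_S(\sigma)} = D_{\alpha_S(\sigma^{-1})}$, and $\alpha_S(\sigma)$ is an $S$-preserving bijection, hence in particular a homeomorphism, between the cofinite open subsets $D_{\alpha_S(\sigma)}$ and $D_{\alpha_S(\sigma^{-1})}$ of $X$. The end-counting argument used in the proof of Corollary~\ref{growthdisc} --- removing $m$ points from a topological manifold with no boundary and finitely many ends increases the number of ends by a positive multiple of $m$ depending only on the dimension --- then forces these two cofinite complements to have the same cardinality. This gives $\ell_S^-(\sigma) = \ell_S^-(\sigma^{-1})$, and taking leading coefficients in (2) yields $m_{S,\sigma} = m_{S,\sigma^{-1}}$. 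I do not expect a serious obstacle: the only point needing small care is to confirm that $\alpha_S(\sigma)$, defined as a union of $S$-preserving partial homeomorphisms, really realises a homeomorphism between its open domain and range, so that the ends argument applies verbatim.
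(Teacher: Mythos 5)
Your proposal is correct and follows essentially the same route as the paper: the paper's proof simply says the argument of Corollary~\ref{semiprod} applies verbatim with $\alpha_S$ in place of $\alpha$, and derives the final symmetry statement from the fact (checked in the proof of Corollary~\ref{growthdisc}) that complements of $n$ and $m$ points in such a manifold are never homeomorphic for $n\neq m$. Your extra care about $\alpha_S(\sigma)$ being a homeomorphism between its open domain and range is sound (it is an open continuous bijection, being a union of partial homeomorphisms with open domains) and only makes explicit what the paper leaves implicit.
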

\begin{proof}
The proof follows the same (two!) lines as that of Corollary \ref{semiprod} (applied to $\alpha_S$ instead of $\alpha$). The last statement rather follows   from the easy fact, checked in the proof of Corollary \ref{growthdisc}, that the complement of $n$ and $m$ points in such a topological manifold $X$ are never homeomorphic for $n\neq m$.
\end{proof}

In cases such as the pseudogroup of local isometries of $\R/\Z$, we have $\alpha=\alpha_S$ so in this case Corollary \ref{semiprod2} does not provide anything new. On the other hand, it yields something when $\alpha_S$ is finer than $\alpha$. Let us provide some illustrations:

\begin{exe}\label{exfirstco}
Fix $k\in\N$. Let $\mathcal{C}^k$ be the pseudogroup of local diffeomorphisms of class $\mathcal{C}^k$ on the circle $\R/\Z$. Then the $\mathcal{C}^k$-indeterminacies of $\sigma$ are the set of points at which either $\sigma$ or one of its derivatives $\sigma^{(i)}$ for some $i\in\{1,\dots,k\}$ has no outer limit at $x$; call this $k$-singular points. 

Another example is the pseudogroup $\mathbf{Aff}$ of local affine homeomorphisms; we have $\mathbf{Aff}\subset\mathcal{C}^1$ and these two pseudogroups have the same indeterminacies: indeed at the neighborhood of a real number $x$ (minus $\{x\}$), a piecewise affine map $\sigma$ coincides with a local affine homeomorphism if and only it coincides with a local $\mathcal{C}^1$-diffeomorphism. In the piecewise affine context, 1-singular points are often called breakpoints. This proves that the number of 1-singular points of $f^n$, for $f$ piecewise affine and $n\in\Z$, can be written as $q|n|+b(n)$ with $b$ bounded and $q\in\N$. 
In particular, this retrieves Guelman and Liousse's result \cite[Proposition 4.1]{GuL} that this number, when $n\to\infty$, grows linearly as soon as it is unbounded.

One more example is $\mathbf{P}^1_\R$ with the pseudogroup $\mathbf{Proj}$ consisting of restrictions of projective transformations (that is, homographies). 
On the circle (or any standard curve), the isometric charts in $\R$ being isometric, they endow it with a $\mathbf{Proj}$-modeled structure, which itself defines a $\mathcal{C}^2$-structure, with the same $S$-indeterminacy points for both $S$: this reflects the fact that the germ at a neighborhood $x$ (minus $\{x\}$) of a piecewise projective transformation $\sigma$ at a point $x$ coincides with a germ of projective transformation if and only if $x$ is not 2-singular at $x$.

Note that the pseudogroup $\mathbf{Proj}$ transfers as a pseudogroup on $\R/\Z$ (local homeomorphisms that are written locally as homographies), so it is less natural but harmless to stick to pseudogroups on $\R/\Z$.
\end{exe}

We now apply this to counting singularities of piecewise continuous or differentiable self-transformations. We use the notation introduced before Corollary \ref{countingsing}.

\begin{cor}\label{singro}
For every $k\in\N$ and every parcelwise-$\mathcal{C}^k$ self-transformation $\sigma$ of $\R/\Z$, there exist integers $0\le m_0\le \dots \le m_k$ and bounded non-negative even functions $b_i:\Z\to\N$ such that for all $i\in\{0,\dots,k\}$, we have $k_{\le i}(\sigma^n)=m_i|n|+b_i(n)$.

In particular, $k_i(\sigma^n)=(m_i-m_{i-1})|n|+O(1)$ and $k_{\le i}$ and $k_i$ have the property of growing either linearly or being bounded.
\end{cor}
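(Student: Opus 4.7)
The plan is to apply Corollary \ref{semiprod2} separately to each of the $k+1$ nested pseudogroups $\mathcal{C}^0 \supset \mathcal{C}^1 \supset \dots \supset \mathcal{C}^k$ on the circle $\R/\Z$, and then read off the required monotonicity, evenness, and non-negativity.

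First I would check that for $i\in\{0,\dots,k\}$ and $\sigma\in\PC_{\mathcal{C}^k}(\R/\Z)$, the cardinal $k_{\le i}(\sigma)$ of the set $\bigcup_{j=0}^i K_j(\sigma)$ is precisely the number of $\mathcal{C}^i$-indeterminacy points of $\sigma$, in the sense of \S\ref{alphas}. This is the content of Example \ref{exfirstco}: a point $x$ lies in the domain of $\alpha_{\mathcal{C}^i}(\sigma)$ iff $\sigma$ admits a representative defined at $x$ and of class $\mathcal{C}^i$ in some neighborhood of $x$, which by definition means $x\notin\bigcup_{j\le i}K_j(\sigma)$.

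Next, Corollary \ref{semiprod2}(1)--(2) applied to the cofinite-partial action $\alpha_{\mathcal{C}^i}$ of the group generated by $\sigma$ on $\R/\Z$ yields an integer $m_i\in\N$ and a bounded function $\beta_i:\N\to\N$ such that
\[k_{\le i}(\sigma^n)=\ell^-_{\mathcal{C}^i}(\sigma^n)=m_i\,n+\beta_i(n)\qquad\text{for all }n\in\N.\]
Since $\R/\Z$ is a compact manifold with no boundary and no ends, the final clause of Corollary \ref{semiprod2} gives $\ell^-_{\mathcal{C}^i}(\sigma^n)=\ell^-_{\mathcal{C}^i}(\sigma^{-n})$, i.e.\ $k_{\le i}(\sigma^n)=k_{\le i}(\sigma^{-n})$ (this also follows immediately from the identity $K_j(\sigma)=K_j(\sigma^{-1})$ noted before the statement). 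Setting
\[b_i(n)=k_{\le i}(\sigma^n)-m_i|n|\]
for $n\in\Z$ therefore produces an even function, bounded by $\sup_{n\in\N}\beta_i(n)<\infty$. Non-negativity of $b_i$ is built into Proposition \ref{cardefz}(1) (or follows from subadditivity of $\ell^-$, since $m_i=\lim_n\ell^-_{\mathcal{C}^i}(\sigma^n)/n$ by Fekete's lemma). This gives the claimed formula $k_{\le i}(\sigma^n)=m_i|n|+b_i(n)$.

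For the monotonicity $m_0\le m_1\le\dots\le m_k$, I would use that $\mathcal{C}^{i+1}\subset\mathcal{C}^i$ as pseudogroups: any $\mathcal{C}^{i+1}$-chart is a $\mathcal{C}^i$-chart, so a $\mathcal{C}^i$-indeterminacy point is a fortiori a $\mathcal{C}^{i+1}$-indeterminacy point, giving $k_{\le i}\le k_{\le i+1}$ pointwise. Dividing by $n$ and letting $n\to\infty$ yields $m_i\le m_{i+1}$. The consequences for $k_i(\sigma^n)=k_{\le i}(\sigma^n)-k_{\le i-1}(\sigma^n)=(m_i-m_{i-1})|n|+O(1)$ and the linear-or-bounded dichotomy are then immediate (an integer-valued function of $n$ of the form $c|n|+O(1)$ is bounded iff $c=0$ and grows linearly otherwise).

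The only step that required a small verification rather than a direct quote was the identification in the first paragraph of $\mathcal{C}^i$-indeterminacy points with $\bigcup_{j\le i}K_j(\sigma)$; once that is in place, all three of the required properties (linearity with integer slope, evenness, non-negativity of the remainder) come straight from Corollary \ref{semiprod2} and Proposition \ref{cardefz}.
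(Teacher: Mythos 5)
Your proof is correct and follows exactly the route the paper intends: the corollary is stated without an explicit proof precisely because it is the combination of Corollary \ref{semiprod2} applied to each pseudogroup $\mathcal{C}^i$ with the identification of $\mathcal{C}^i$-indeterminacy points from Example \ref{exfirstco}, which is what you carry out (including the evenness from the last clause of Corollary \ref{semiprod2}, equivalently from $k_j(\sigma)=k_j(\sigma^{-1})$, and the monotonicity of the $m_i$ from the nesting of the singular sets). The only nit is that non-negativity of the remainder for $\ell^-$ comes from Proposition \ref{cardefz}(2) rather than (1), but your alternative subadditivity argument covers it anyway.
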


As mentioned in the introduction, in the piecewise affine case Guelman and Liousse \cite[\S 4]{GuL} proved that $k_{\le 1}(\sigma^n)$ (and $k_0(\sigma^n))$ are either bounded of have linear growth. Their proof, more precisely, consists in 
\begin{itemize}
\item proving that either $k_0(\sigma^n)$ is either bounded, or it belongs to $[n-c,Cn]$ for some constants $c,C$ and all $n\in\N$;
\item if $k_0(\sigma^n)$ is bounded, proving that $k_1(\sigma^n)$ is either bounded, or it belongs to $[n-c',C'n]$ for some constants $c',C'$ and all $n\in\N$.
\end{itemize}

\subsection{The main theorem}

Let $S$ be a pseudogroup on a topological space $A$. 

Consider two $S$-modeled spaces $X,X'$ with topological $S$-preserving partial actions of a group $G$. A cofinite $S$-preserving $G$-biequivariant isomorphism is the data of cofinite subsets $Y\subset X$, $Y'\subset X'$ (thus endowed with the corresponding partial actions of $G$ by partial automorphisms of $S$-modeled space) and a $G$-biequivariant isomorphism $\psi:Y\to Y'$ of $S$-modeled spaces.

Thanks to all the preparatory work, we can formulate and obtain:

\begin{thm}\label{main}
Let $X$ be a Hausdorff $S$-modeled space. Let $G$ be a group with a topological $S$-preserving partial action on $X$. Suppose that $G$ transfixes $X$.
Then there exists a Hausdorff $S$-modeled space $X'$ endowed with an $S$-preserving continuous $G$-action, and a cofinite $S$-preserving $G$-biequivariant isomorphism $\psi$ from $X$ to $X'$.

Moreover, we can require that every finite $G$-orbit in $X'$ is included in $Y'$. 
\end{thm}
\begin{proof}
Let $\hat{X}$ be the universal globalization of $X$ (\S\ref{uniglo}). By Corollary \ref{glostru}, $\hat{X}$ canonically inherits an $S$-structure. Since $X$ is transfixed, by Proposition \ref{transfihau} there exists an open Hausdorff $G$-invariant subset $X'$ of $\hat{X}$ such that $X\tu X'$ is finite. Removing all finite $G$-orbits meeting $X'\smallsetminus X$ if necessary, we can ensure that $X'\smallsetminus X$ only meets infinite $G$-orbits. Then we have reached the conclusion, with $Y=Y'=X\cap X'$ and $\psi$ being the identity map from $X\cap X'$. (It is on purpose that the statement of the theorem does not refer to the universal globalization $\hat{X}$, so we do not view $X$ and $X'$ as subsets of the same space.)
\end{proof}

As a first corollary, we have the following. Recall that $S^\wp$ denotes the pseudogroup of parcelwise-$S$ local homeomorphisms \S\ref{parcelws}.

\begin{cor}\label{cormain1}
Let $X$ be a curve. Let $G$ be a group with a homomorphism $G\to\PC_S(X)$. Suppose that $G$ transfixes $X$ for the partial action $\alpha$ (restricted from $\PC(X)$).
Then there exists an $S$-modeled compact curve $X'$ endowed with an $S^\wp$-preserving continuous $G$-action, and a cofinite $S^\wp$-preserving $G$-biequivariant isomorphism from $X$ to $X'$. 
\end{cor}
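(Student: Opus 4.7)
The plan is to apply Theorem \ref{main} with the pseudogroup $S^\wp$ in place of $S$. First, since $X$ is $S$-modeled, it is a fortiori $S^\wp$-modeled, and by \S\ref{parcelws} we have $\PC_S(X) = \PC_{S^\wp}(X)$. The homomorphism $G \to \PC_S(X)$ combined with the canonical splitting from Proposition \ref{semic} yields a topological partial action of $G$ on $X$, namely the partial action $\alpha$. I would verify that this action is $S^\wp$-preserving: at each point $x$ in the domain of $\alpha(g)$, the element $g$ is outer continuous at $x$ and locally (away from $x$) coincides with a parcelwise-$S$ representative in $\ICO_S(X)$, so $\alpha(g)$ is itself $S^\wp$-compatible at $x$. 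The transfixing hypothesis is given by assumption.

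Applying Theorem \ref{main} produces a Hausdorff $S^\wp$-modeled space $X'$ with a continuous $S^\wp$-preserving $G$-action and a cofinite $S^\wp$-preserving $G$-biequivariant isomorphism $\psi\colon Y \to Y'$, such that $X' \smallsetminus Y'$ only meets infinite $G$-orbits. I would then verify that $X'$ is a curve: the open cofinite subset $Y'$ is $S^\wp$-isomorphic, hence homeomorphic, to the cofinite subset $Y$ of the 1-manifold $X$, so $Y'$ is itself a 1-manifold with finitely many components. For any $x \in X' \smallsetminus Y'$, infiniteness of the orbit $Gx$ together with cofiniteness of $Y'$ yields some $g \in G$ with $gx \in Y'$; transporting a 1-manifold neighborhood of $gx$ via the $G$-homeomorphism $g^{-1}$ produces a 1-manifold chart at $x$. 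Finiteness of components and compactness follow from those of $X$ combined with finiteness of $X' \tu Y'$.

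Finally, to upgrade the $S^\wp$-structure on $X'$ to a genuine $S$-structure, I would refine each $S^\wp$-chart by subdividing along the finitely many $S$-breakpoints of its transition maps, obtaining an $S$-atlas on the 1-manifold $X'$; the $G$-action needs only preserve this structure up to $S^\wp$, exactly as stated. The main obstacle I anticipate is the verification that the finitely many extra points of $X' \smallsetminus Y'$ sit inside honest 1-manifold neighborhoods of the Hausdorff space $X'$, which is resolved uniformly by the transport argument along infinite $G$-orbits outlined above.
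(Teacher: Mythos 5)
Your overall route (Proposition \ref{semic} to get the cofinite-partial action $\alpha$, then Theorem \ref{main} applied with the parcelwise pseudogroup, then a check that the resulting space is a curve) is the same as the paper's, and the transport-along-infinite-orbits argument for putting a $1$-manifold chart at the added points is a reasonable elaboration of what the paper leaves implicit. The genuine gap is compactness of $X'$. You assert that ``compactness follow[s] from those of $X$ combined with finiteness of $X'\tu Y'$'', but this fails twice over: first, a curve in this paper is a finite disjoint union of copies of $\R$ and the circle, so $X$ itself need not be compact; second, even when $X$ is compact, the space produced by Theorem \ref{main} is ($G$-equivariantly) $X$ with finitely many finite orbits deleted and finitely many new points adjoined, and deleting a finite orbit from a circle component already yields a noncompact curve. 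So the output of Theorem \ref{main} is in general not compact, and no finiteness bookkeeping will make it so. The paper closes this gap by an explicit extra step: one-point compactify each component of the (possibly noncompact) curve $X''$ obtained from Theorem \ref{main}, observe that the result is again a curve, extend the continuous $G$-action over the added points, and note that the extended action is still $S^\wp$-preserving because $S^\wp$ is stable under concatenation. You need this step (or an equivalent one), and in particular you need the remark about concatenation to see that the extension at the new fixed points does not leave the parcelwise-$S$ class.

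Two smaller points. Your last paragraph about refining $S^\wp$-charts into $S$-charts by cutting at the finitely many $S$-breakpoints of the transition maps is fine on a curve and matches the intended reading of the statement (the space is $S$-modeled, the action only $S^\wp$-preserving), but it should also be carried out on the compactified space, where the added points are new potential breakpoints. And the claim that $\alpha(g)$ is ``$S^\wp$-compatible at $x$'' for $x$ in its domain deserves a word of care: the domain of $\alpha$ (as opposed to $\alpha_S$) can strictly contain the set where $g$ is locally parcelwise-$S$ in the chart sense, which is exactly why the target structure is only $S^\wp$ and not $S$; your phrasing is consistent with this but should not be read as saying $\alpha(g)$ is $S$-preserving on its domain.
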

\begin{proof}
We use Proposition \ref{semic} to have a cofinite-partial action, and thus apply Theorem \ref{main}. This yields the result, except compactness of $X'$. First obtain $X''$ possibly not compact; 1-point compactify each component to obtain a space $X'$, which is naturally a curve. Extend the continuous action; since $S^\wp$ is stable under concatenation, the resulting action is $S^\wp$-preserving.
\end{proof}

\subsection{Applications of Corollary \ref{cormain1}}

Let us provide applications of Corollary \ref{cormain1}. It is mostly interesting when $S=S^\wp$. First, recall from Corollary \ref{semiprod}(\ref{trxiff}) that the transfixing property is equivalent to the boundedness of the number of discontinuities.

\subsubsection{Pseudogroup of all local homeomorphisms}

Then $S=S^\wp$; here an $S$-modeled curve is just a curve. So Corollary \ref{cormain1} concerns homomorphisms into $\PC(X)$. It says that when the partial action of $G$ is transfixing, we can find a continuous action on another compact curve that coincides with the original one on a cofinite subset. 

\subsubsection{Pseudogroup of all local orientation-preserving homeomorphisms}

Again, $S=S^\wp$. This concerns homomorphisms into $\PC^+(\R/\Z)$. Here an $S$-modeled curve is just an oriented curve.
Corollary \ref{cormain1} then says that the conjugation can be chosen to be piecewise orientation-preserving.

\subsubsection{Local isometries}

Then $S=S^\wp$. This is the study of homomorphisms into $\IETbw$. Here an $S$-modeled curve is the same as a 1-dimensional Riemannian manifold with finite volume (thanks to the finiteness assumption in the definition of $S$-model); in particular every $S$-modeled curve is $S$-preserving homeomorphic to a standard curve. Hence Corollary \ref{cormain1} says that for a homomorphism $G\to\IETbw$, the transfixing condition implies a piecewise isometric conjugation to an isometric action on a compact curve.

This has various consequences: for instance if the original homomorphism $G\to\IETbw$ is transfixing and injective, then this forces $G$ to be virtually abelian. For instance, this implies that $\IETbw$ has no infinite subgroup with Property FW. In particular, it has no infinite subgroup with Kazhdan's Property T. The latter fact was established in \cite[Theorem 6.1]{DFG} for $\IET^+$ by a distinct method (rather related to amenability); it implies the result for $\IETbw$ because $\IETbw$ embeds into $\IET^+$ as the centralizer of $x\mapsto -x$ (see Lemma \ref{bousch}).

This notably applies when $\Gamma$ is cyclic and distorted in $G$. In the case of $\IET$ or $\IETbw$, this narrows possibilities for distorted cyclic subgroups, but is not enough to discard them; additional work \cite[\S 4]{Nov}, conceptualized in \S\ref{s_nondile} allows to conclude that cyclic subgroups of $\IETbw$ are undistorted. 

\begin{cor}\label{abiet}
Every finitely generated abelian subgroup of $\IETbw$ is undistorted.
\end{cor}
\begin{proof}
Let $A$ be a finitely generated abelian subgroup, with a finite generating subset $W'$. If $A$ is distorted, then there exists a finite subset $W$ in $\IETbw$ and a sequence $(n_i)$ tending to infinity, a sequence $(a_i)$ in $A$ such that $|a_i|_{W'}\simeq n_i$ and $\lim |a_i|_W/n_i=0$. Let $\ell$ be the cardinal-definite function associated to this partial action. By Proposition \ref{cardefab}, there exists a finite index subgroup of $A$ of the form $B\oplus A'$, such that $\ell$ is bounded on $B$ and equivalent to the word length on $A'$. We can suppose that $a_i=(b_i,a'_i)\in B\oplus A'$ for all $i$. Then $\ell(b,a')\ge c|a'|_{W'}-C$ for some some $c>0$, $C\in\R$ and all $(b,a')\in B\times A'$. In particular, $\ell(a_i)\ge c|a'_i|_{W'}-C$ for all $n$. It follows that $\lim |a'_i|_{W'}/n_i=0$, and in turn that $\liminf |b_i|_{W'}/n_i>0$. Given that $\lim |a'_i|_W/n_i=0$ and $\lim |a_i|_W/n_i=0$, we deduce that $\lim |b_i|_W/n_i=0$. Thus $B$ is a distorted subgroup.

Since $B$ transfixes $X$, after changing the model and using Corollary \ref{cormain1}, we can suppose that $B$ acts by isometries. Hence it is undistorted by Proposition \ref{undiiet}. This reaches a contradiction.
\end{proof}

An immediate consequence is that the only virtually polycyclic groups embedding into $\IETbw$ are the virtually abelian ones; the latter fact being proved in \cite{DFG2} with another method, handling more general virtually torsion-free solvable groups. Indeed, it is an easy exercise to show that a virtually polycyclic group is virtually abelian if and only if all its abelian subgroups are undistorted (beware that there exist non-virtually-abelian polycyclic groups of the form $\Z^4\rtimes\Z$ in which all cyclic subgroups are undistorted).

\subsubsection{Local motions}

Again, $S=S^\wp$, and we obtain the same with the bonus of a piecewise orientation-preserving conjugation.

\subsection{Refinement of the partial action}
Applying Theorem \ref{main} with $\alpha_S$ instead of $\alpha$, we often obtain stronger conclusions:

\begin{cor}\label{cormain2}
Let $X$ be a finitely-charted $S$-modeled curve. Let $G$ be a group with a homomorphism $G\to\PC_S(X)$. Suppose that $G$ transfixes $X$ for the partial action $\alpha_S$.
Then there exists a finitely-charted $S$-modeled curve $X'$ endowed with an $S$-preserving continuous $G$-action, and a cofinite $S$-preserving $G$-biequivariant isomorphism from $X$ to $X'$. 
\end{cor}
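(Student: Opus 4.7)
The plan is to reduce directly to Theorem \ref{main}, applied to the partial action $\alpha_S$ rather than $\alpha$. A curve is Hausdorff and has no isolated points, so by \S\ref{alphas} the group $G$ inherits, via its homomorphism to $\PC_S(X)$, a topological $S$-preserving cofinite-partial action $\alpha_S$ on $X$. The hypothesis that $G$ transfixes $X$ for $\alpha_S$ is exactly what Theorem~\ref{main} needs: so I would invoke it to produce a Hausdorff $S$-modeled space $X'$ carrying a continuous $S$-preserving $G$-action, together with cofinite subsets $Y\subset X$, $Y'\subset X'$ and an $S$-preserving $G$-biequivariant isomorphism $\psi\colon Y\to Y'$, and (using the ``moreover'' clause) I would arrange that $X'\smallsetminus Y'$ meets only infinite $G$-orbits.

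The remaining work is purely bookkeeping: one has to show that the Hausdorff $S$-modeled space $X'$ delivered by Theorem~\ref{main} is in fact a finitely-charted curve. Since $X$ is a finitely-charted $S$-modeled curve and $Y$ is a cofinite open subset, $Y$ is itself a finitely-charted $S$-modeled curve (restrict each of the finitely many original chart domains to $Y$, using that sub-open restrictions lie in the complete atlas). Via $\psi$, the same holds for $Y'\subset X'$. Now $X'\smallsetminus Y'$ is a finite set of points in a Hausdorff $S$-modeled space; by the $S$-structure on $X'$ (which comes from Corollary~\ref{glostru} if one unwinds the proof of Theorem~\ref{main}), each such point has an open $S$-chart neighborhood into the model curve $A$. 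Thus $X'$ is locally Euclidean of dimension one, Hausdorff, and has only finitely many connected components (at most those of $Y'$ plus the cardinality of $X'\smallsetminus Y'$), so $X'$ is a curve; adjoining one chart domain per point of $X'\smallsetminus Y'$ to the finite atlas of $Y'$ exhibits $X'$ as finitely-charted.

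The main technical content is packed entirely into Theorem~\ref{main}, so I do not expect any genuine obstacle; the only point requiring modest attention is checking that the finitely many points in $X'\smallsetminus Y'$ do admit $S$-chart neighborhoods in $X'$, which is immediate from the fact (Proposition~\ref{exstru}) that the $S$-structure on the globalization extends the one on $X$ and that $X'$ is open in the globalization. Compactness is not claimed and cannot always be arranged, since non-standard components (e.g.\ intervals) are legitimate pieces of finitely-charted $S$-modeled curves.
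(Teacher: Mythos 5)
Your proof is correct and follows the paper's route exactly: the paper's own proof is the single sentence ``This is a direct application of Theorem \ref{main}, applied to the partial action $\alpha_S$.'' The extra bookkeeping you supply (that the cofinite subset $Y$ of a finitely-charted curve is finitely-charted, and that the finitely many points of $X'\smallsetminus Y'$ have $S$-chart neighborhoods, so $X'$ is again a finitely-charted curve) is left implicit in the paper but is accurate and harmless to spell out.
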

\begin{proof}
This is a direct application of Theorem \ref{main}, applied to the partial action $\alpha_S$.
\end{proof}

For continuous actions, we can get a better control on the change of model. Denote by $\PC_S^0(X)$ the set of self-homeomorphisms of $X$ that induce an element of $\PC_S(X)$. If $X$ has no isolated point, the canonical map $\PC_S^0(X)\to\PC_S(X)$ is injective.

\begin{cor}\label{cormain3}
Fix two pseudogroups $S\subset T$ on $\R/\Z$.
Let $X$ be a finitely-charted $S$-modeled curve. Let $G$ be a group with a homomorphism $\alpha:G\to\PC_S^0(X)\cap\Aut_T(X)$. Suppose that $G$ transfixes $X$ for the partial action $\alpha_S$.
Then there exists a cofinite $G$-invariant subset $Y$ of $X$,  
finitely-charted $S$-modeled curve $X'$ endowed with an $S$-preserving continuous $G$-action, and a $T$-preserving and parcelwise $S$-preserving $G$-biequivariant homeomorphism from $Y$ to $X'$. 

In particular, if $G$ has no finite orbit on $X$, then $Y=X$.
\end{cor}
\begin{proof}
Corollary \ref{cormain2} yields a cofinite subset $V$ of $X$, an $S$-modeled curve $W$ with an $S$-preserving continuous $G$-action, a cofinite subset $V'$ of $W$, and a $G$-biequivariant $S$-preserving homeomorphism $f:V\to V'$. Here $X$ is endowed with the partial action $\alpha_S$, while $W$ is endowed with its given action, and the cofinite subsets $V$ and $V'$ are endowed with the induced partial actions.

We can apply Proposition \ref{glo_tran} to $X$. It ensures that there is a cofinite $G$-invariant subset $Z_0$ of $X$ such that for every cofinite subset $Z_1$ of $Z_0$, denoting $Z^\vee_1$ the smallest $G$-invariant subset including $Z_1$, the inclusion map $(Z_1,\alpha_S)\to (Z^\vee_1,\alpha)$ is a universal globalization. Similarly, we apply Proposition \ref{glo_tran} to $W$ (for $W$, we view the global action as a partial action as well): it yields a $G$-invariant subset $Z'_0$ of $W$ with the analogous property.

We can suppose, replacing simultaneously $V$ and $V'$ with smaller subsets, that $V\subset Z_0$ and $V'\subset Z'_0$. Then the inclusions $V\to V^\vee$ and $V'\to (V')^\vee$ are universal globalizations. Then the isomorphism $f:V\to V'$ of topological partial actions extends, by the universal property, to a $G$-biequivariant homeomorphism $V^\vee\to (V')^\vee$. So we obtain the conclusion, with $Y=V^\vee$ and $X'=(V')^\vee$.

Let us observe that $f$ is $T$-preserving. Indeed, this follows from the construction and the proof of Theorem \ref{main}: we have inclusions $X\to\hat{X}\leftarrow W$. The $G$-invariant $T$-structure is inherited by $\hat{X}$ and then by $W$, still $G$-invariant, and hence by the $G$-invariant subsets $Y$ on the one hand and $X'$ on the other hand. The map $f:Y\to X'$, constructed inside $\hat{X}$, is just the identity map. Hence it is $T$-invariant. 

This argument for $T$ does not apply to $S$, but applies to the pseudogroup $S^\wp$ of local parcelwise-$S$ homeomorphisms. Hence $f$ is $T$-preserving and parcelwise $S$-preserving.
\end{proof}

We now apply it to various pseudogroups $S$ on $\R/\Z$. In each case the study has several steps: identifying domains of definition for $\alpha_S$, identifying $S$-modeled curves, and drawing consequences.

\subsection{Pseudogroup of local affine homeomorphisms}

Here $\PC_S(X)$ is the group of piecewise affine self-transformations. The domain of definition is the set of non-singular points, which is the same as points at which both the function and its derivative have an outer limit. Hence, transfixing is equivalent to have a uniformly bounded number of singularities.

Classifying affinely modeled curves is not hard, see Appendix \ref{s_projcur}. In particular the affine automorphism group of any affinely modeled finitely-charted curve is virtually abelian. As a consequence, we have:

\begin{cor}\label{affinekaz}
Let $\Gamma$ be a group with a faithful piecewise affine action on a standard curve, and a subgroup $\Lambda$. Suppose that $(\Gamma,\Lambda)$ has relative Property FW. Then $\Lambda$ is virtually torsion-free abelian. In particular, if $\Gamma$ has Property FW (e.g., Kazhdan's Property T) then it is finite.
\end{cor}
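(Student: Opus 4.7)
The plan is to translate the piecewise affine action into a cofinite-partial action, use relative Property FW to force a transfixing condition on $\Lambda$, apply Corollary \ref{cormain2} to realize $\Lambda$ as a subgroup of the affine automorphism group of a finitely-charted affine curve, and conclude via the classification of 1-dimensional affine structures. Concretely, by Lemma \ref{mainle} and \S\ref{alphas}, the homomorphism $\Gamma\to\PC_{\mathbf{Aff}}(X)$ equips $\Gamma$ with a canonical cofinite-partial action $\alpha_{\mathbf{Aff}}$ on the standard curve $X$, whose points of indeterminacy for $\sigma\in\Gamma$ are precisely its affine singular points. Via the universal globalization (Theorem \ref{tunivglo}), this is a commensurating action of $\Gamma$ on $\hat X$ with commensurated subset $X$, and the hypothesis of relative Property FW for $(\Gamma,\Lambda)$ gives that $\Lambda$ transfixes $X$ for $\alpha_{\mathbf{Aff}}$.

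Applying Corollary \ref{cormain2} with $S=\mathbf{Aff}$ now produces a finitely-charted affinely modeled curve $X'$ with a continuous $\mathbf{Aff}$-preserving $\Lambda$-action and a cofinite $\mathbf{Aff}$-preserving $\Lambda$-biequivariant isomorphism between cofinite subsets of $X$ and $X'$. The induced map $\rho:\Lambda\to\Aut_{\mathbf{Aff}}(X')$ is injective: any $\gamma\in\ker\rho$ acts trivially on a cofinite (hence dense) subset of $X$ under the original piecewise affine action, so represents the identity in $\PC_{\mathbf{Aff}}(X)$, whence $\gamma=1$ by faithfulness of the $\Gamma$-action.

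The main remaining step, and the main technical obstacle, is to conclude ``virtually torsion-free abelian'' from this embedding. By the classification of Hausdorff finitely-charted affine 1-curves (Appendix \ref{s_projcur}), each component of $X'$ is an affine interval or affine circle (standard or non-standard), and the identity component of $\Aut_{\mathbf{Aff}}(X')$ is correspondingly a product of factors each isomorphic to $\{1\}$, $\mathrm{Aff}^+(\R)$, or the 1-torus $\R/\Z$. In particular $\Aut_{\mathbf{Aff}}(X')$ is virtually a Lie group of solvable length at most $2$. Relative Property FW, transported through the additional commensurating actions naturally attached to these factors (tree-like actions coming from $\mathrm{Aff}^+(\R)$ and cyclic coset actions on each $\R/\Z$), forces the image of $\Lambda$ into the abelian translation subgroup of each $\mathrm{Aff}^+(\R)$-factor and bounds the torsion contribution of each circle factor; the combined conclusion is that $\Lambda$ is virtually torsion-free abelian.

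For the ``in particular'' clause, Kazhdan's Property T implies Property FW (see \cite{CorFW}). Applied with $\Lambda=\Gamma$, the above gives a finite-index torsion-free abelian subgroup $\Gamma_0\cong\Z^n$ of $\Gamma$. Property FW is inherited by finite-index subgroups, so $\Gamma_0$ also has it; but $\Z^n$ for $n\ge 1$ does not (witness the commensurating action on $\N\times\Z^{n-1}\subset\Z^n$). Hence $n=0$ and $\Gamma$ is finite.
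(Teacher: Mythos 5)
Your overall strategy matches the paper's (implicit) argument: pull back the cofinite-partial action $\alpha_{\mathbf{Aff}}$, use relative Property FW to conclude that $\Lambda$ transfixes $X$, apply Corollary \ref{cormain2} to embed $\Lambda$ into $\Aut_{\mathbf{Aff}}(X')$ for some finitely-charted affinely modeled curve $X'$, and finish with the classification in Appendix \ref{s_projcur}. Everything up to and including the injectivity of $\rho$ is correct. The final step, however, contains a genuine gap, compounded by a misreading of the classification. Since the model space is $\R/\Z$, the affine line $\R$ and the half-line $\R_{>0}$ are \emph{not} finitely-charted (the paper says this explicitly at the end of the affine part of the appendix), so no $\mathrm{Aff}^+(\R)$ factor can occur: the components of $X'$ are among $\mathopen]-1,1\mathclose[$, $\Theta_1$ and $\Theta_t$, and the finite-index subgroup of $\Aut_{\mathbf{Aff}}(X')$ preserving components and orientations is a product of copies of $\R/\Z$ and trivial groups. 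Thus $\Aut_{\mathbf{Aff}}(X')$ is virtually abelian outright, and your detour through "tree-like actions coming from $\mathrm{Aff}^+(\R)$" is unnecessary. It is also not a valid argument as written: relative Property FW of $(\Gamma,\Lambda)$ constrains commensurating actions \emph{of $\Gamma$} restricted to $\Lambda$, whereas the actions you invoke are attached to the target group $\Aut_{\mathbf{Aff}}(X')$, into which only $\Lambda$ (not $\Gamma$) has been mapped; pulling them back gives commensurating actions of $\Lambda$ alone, about which the hypothesis says nothing. Likewise, the phrase "bounds the torsion contribution of each circle factor" is not backed by any construction; note that $(\R/\Z)^k$ does contain abelian subgroups with infinite torsion (e.g.\ $\Q/\Z$), so the passage from "virtually a subgroup of $(\R/\Z)^k$" to "virtually torsion-free abelian" genuinely requires an argument. (To be fair, the paper states the corollary as a direct consequence of the remark that these automorphism groups are virtually abelian and does not spell out the torsion-free refinement either; only "virtually abelian" is used in the subsequent applications.)

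Two smaller points. In the "in particular" step you pass from "torsion-free abelian of finite index" to "$\Gamma_0\cong\Z^n$" without comment; this uses that Property FW implies finite generation, which should be cited. And once the automorphism groups are correctly identified, the clean route is: $\Lambda$ has a finite-index subgroup embedding in $(\R/\Z)^k$, hence is virtually abelian, and a group with Property FW that is virtually abelian is finite — no auxiliary commensurating actions on the Lie-group factors are needed.
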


Andr\'es Navas informed me that even the failure of Property T is a new result: for instance the above applies to the subgroup $V_{\{2,3\}}$ of piecewise affine maps with slopes in the multiplicative group $\langle 2,3\rangle$ and singularities in $\Z[1/6]$, as well as its subgroup $T_{\{2,3\}}$ of elements acting continuously on the circle; the failure of Property T was unknown for both groups.

Lodha, Matte Bon and Triestino have independently obtained Corollary \ref{affinekaz} in the case of continuous piecewise affine maps (thereby also proving the failure of Property T for $T_{\{2,3\}}$).

In the case of distortion, we deduce the following, which is essentially due  to Guelman and Liousse \cite{GuL}.

\begin{cor}\label{distoafpi}
For any distorted cyclic subgroup $\langle c\rangle$ of the group of piecewise affine self-transformations on $\R/\Z$, there is a cofinite piecewise affine conjugation to an affine action on a standard curve. In the piecewise orientation-preserving case, we can choose the cofinite conjugation to be piecewise orientation-preserving.
\end{cor}
\begin{proof}
Since $\langle c\rangle$ is distorted, its cofinite-partial action is transfixing (Corollary \ref{exfw}). Therefore, by Corollary \ref{cormain2}, there a cofinite piecewise affine conjugation to an affine action on an affinely modeled curve $C$. Some finite index subgroup $\langle c'\rangle$ preserves each component.
Then Corollary \ref{affinondi} ensures that $\langle c'\rangle$, and hence $\langle c\rangle$, acts with finite order on any non-standard circle occuring in $C$. Hence, removing a finite $\langle c\rangle$-invariant subset, we can suppose that there is no non-standard circle in $C$.
\end{proof}

The minor nuance is that Guelman and Liousse work in the piecewise orientation-preserving case, and obtain the conjugation after passing to a subgroup of finite index.

Also in the continuous case, we can refine the conjugacy and deduce the following:

\begin{cor}\label{distoafhom}
For any distorted cyclic subgroup $\langle c\rangle$ of the group of piecewise affine self-homeomorphisms of the circle $\PC^0_{\mathbf{Aff}}(\R/\Z)$, there exists a piecewise self-homeomorphism of $\R/\Z$ conjugating $\langle c\rangle$ to a cyclic group of irrational rotations.
\end{cor}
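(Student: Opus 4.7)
The plan is to combine Corollary~\ref{distoafpi} with Corollary~\ref{cormain3}: the first pins down the affine model on a standard curve, and the second promotes the resulting cofinite conjugation into a genuine piecewise affine self-homeomorphism of $\R/\Z$. First I would reduce to the orientation-preserving case. If $c$ were orientation-reversing, then $c^2$ would be orientation-preserving and still distorted, so applying the statement to $c^2$ would give $c^2$ piecewise-affinely conjugate to an irrational rotation $R_\alpha$; but since conjugation preserves orientation, this would yield an orientation-reversing piecewise affine homeomorphism $\sigma$ of $\R/\Z$ with $\sigma^2 = R_\alpha$, and such a $\sigma$ must have a fixed point (Lefschetz number $2$), which is then a fixed point of $R_\alpha$ as well, contradicting irrationality.

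Next, applying Corollary~\ref{distoafpi} to $\langle c\rangle$ (after saturating the cofinite subsets so they are $\langle c\rangle$-invariant) gives a cofinite piecewise affine conjugation $\phi_0\colon V_0 \to V_0'$ to an affine action of $\langle c\rangle$ on a standard curve $X''$. Replacing $c$ by a power preserving each component of $X''$, I would observe that on any interval component of the standard curve the only orientation-preserving affine automorphism is the identity, while on each standard circle component $c$ acts as a rotation. If no circle component carried an irrational rotation, a further power of $c$ would act as the identity on $X''$, hence on $V_0$, and hence by continuity of $c$ on $\R/\Z$ on all of $\R/\Z$, forcing $c$ to have finite order and contradicting distortion. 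So some circle component $C\subset X''$ carries an irrational rotation; since irrational rotations have no nonempty finite invariant set, $V_0'\cap C = C$, so $V_0\cong V_0'$ contains a topological circle. But any proper cofinite subset of $\R/\Z$ is a disjoint union of open arcs and contains no topological circle, so $V_0 = \R/\Z$; and since $V_0'$ is then a connected circle inside the standard curve $X''$, the components of $X''$ other than $C$ would lie in the finite set $X''\setminus V_0'$ and hence must be empty, giving $X'' = C$ a single standard circle on which $c$ acts as an irrational rotation.

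Through $\phi_0$ this shows $c$ has no periodic points on $\R/\Z$, so $\langle c\rangle$ has no finite orbit. Applying Corollary~\ref{cormain3} with $S = \mathbf{Aff}$ then yields $Y = \R/\Z$ together with a $\langle c\rangle$-equivariant piecewise affine homeomorphism $\phi\colon \R/\Z \to X'$ onto an affinely modeled curve $X'$ with continuous affine action. Topologically $X'\cong\R/\Z$ is a circle, and by the classification of affinely modeled circles (Appendix~\ref{s_projcur}) it is either the standard $\R/\Z$ or a nonstandard $\R_{>0}/\langle t\rangle$. The nonstandard case is excluded by Corollary~\ref{affinondi}: conjugation by the piecewise affine homeomorphism $\phi$ would transfer nondistortion of $c$ back to $\PC_{\mathbf{Aff}}(\R/\Z)$, hence to $\PC^0_{\mathbf{Aff}}(\R/\Z)$, contradicting the hypothesis. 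Therefore $X' = \R/\Z$ standard, $c$ acts as an orientation-preserving affine automorphism (a rotation), and this rotation is irrational because $c$ has infinite order. The map $\phi$ is the required piecewise self-homeomorphism.

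The hard part will be the middle paragraph, where one has to squeeze a single standard circle out of the generic cofinite affine model supplied by Corollary~\ref{distoafpi}. The decisive ingredients are two kinds of rigidity: the topological rigidity that any cofinite subset of $\R/\Z$ containing a circle is the whole circle, and the affine rigidity of standard curves---no nontrivial orientation-preserving affine automorphism of $(0,1)$, rotations on each standard circle component---combined with the continuity of the $c$-action, which propagates a ``finite order almost everywhere'' back to a literal finite order on $\R/\Z$.
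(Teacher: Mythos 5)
Your overall architecture (get an affine model, rule out intervals and non-standard circles via Corollary~\ref{affinondi}, conclude an irrational rotation on a standard circle) matches the paper's, but the middle paragraph contains a genuine gap. You deduce $V_0'\cap C=C$ from ``irrational rotations have no nonempty finite invariant set,'' which presupposes that the finite complement $X''\smallsetminus V_0'$ is $\langle c\rangle$-invariant. Corollary~\ref{distoafpi} (which rests on Corollary~\ref{cormain2}, i.e.\ on Theorem~\ref{main}) gives no such invariance: the cofinite subsets in a ``cofinite $G$-biequivariant isomorphism'' carry only partial actions, and in the construction $V_0'$ arises as $X\cap X'$ inside the globalization, with only $X'$ invariant. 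Your parenthetical fix --- ``after saturating the cofinite subsets so they are $\langle c\rangle$-invariant'' --- does not work: if $C\smallsetminus V_0'$ contains a point of an infinite orbit (e.g.\ any point of a circle carrying an irrational rotation), then $\bigcap_n c^nV_0'$ is no longer cofinite, and enlarging the complement to make it invariant likewise destroys cofiniteness. Since the entire purpose of that paragraph is to establish that $\langle c\rangle$ has no finite orbit (so as to invoke the last clause of Corollary~\ref{cormain3}), the proof as written does not close.

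The paper avoids this entirely, and you can too: apply Corollary~\ref{cormain3} \emph{first} (with $S=\mathbf{Aff}$), which produces a cofinite $\langle c\rangle$-\emph{invariant} subset $Y\subset\R/\Z$ and a piecewise affine biequivariant homeomorphism $Y\to X'$ onto a finitely-charted affinely modeled curve with a genuine affine action. If $Y\neq\R/\Z$, then $Y$, hence $X'$, is a finite disjoint union of intervals; each finitely-charted affinely modeled interval is isomorphic to $\mathopen]0,1\mathclose[$, whose affine automorphism group is finite, so some power of $c$ acts trivially on $X'$, hence on the dense set $Y$, hence on $\R/\Z$ by continuity --- contradicting that $c$ has infinite order. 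So $Y=\R/\Z$, $X'$ is an affine circle, and your final paragraph (exclusion of $\R_{>0}/\langle t\rangle$ via Corollary~\ref{affinondi}, infinite order forcing irrationality) finishes the argument exactly as in the paper. With this route the detour through Corollary~\ref{distoafpi} and the initial orientation-reversing reduction are both unnecessary (an orientation-reversing affine automorphism of an affine circle has order $2$, so that case is excluded for free at the end).
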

\begin{proof}
By Corollary \ref{cormain3}, there is a cofinite $\langle c\rangle$-invariant subset $Y$ of $X$, another affinely modeled curve $X'$ and and a piecewise affine homeomorphism $Y\to X'$ conjugating the $\langle c\rangle$-action to an affine action on $Y$.

If $Y\neq X$, then $X'$ is homeomorphic to a finite disjoint union of intervals, each being finitely-charted; hence each is isomorphic as affinely modeled curve to $\mathopen]0,1\mathclose[$ and hence has a finite automorphism group; since $\langle c\rangle$ is distorted, it is infinite and we get a contradiction. 

So $Y=X$. Thus $X'$ is an affinely modeled curve homeomorphic to the circle, and $\langle c\rangle$ acts as an irrational rotation. By Corollary \ref{affinondi}, irrational rotations of non-standard affine circles are undistorted. Hence $X'$ is a standard circle, and hence we can suppose (conjugating with an affine isomorphism $X'\to X$) that $X'=X$.
\end{proof}

Let us provide different equivalent restatements the question asked after Corollary {distortedaff}.

\begin{cor}\label{eqque}
Denote by $\theta_r$ the rotation $x\mapsto x+r$ on $\R/\Z$.
Let $X_d$ be a disjoint union of $d$ copies of standard circles $Y_1,\dots,Y_d$. The following are equivalent. 
\begin{enumerate}[(i)]
\item\label{hypiaf} The group $\PC_{\mathbf{Aff}}(\R/\Z)$ admits a distorted cyclic subgroup; 
\item\label{hypiafp} The group $\PC^+_{\mathbf{Aff}}(\R/\Z)$ admits a distorted cyclic subgroup; 
\item\label{piaf3} There exists $r\in\R/\Z\smallsetminus\Q/\Z$ such that $\theta_r\sqcup\mathrm{id}_Y$ is a distorted element in $\PC_{\mathbf{Aff}}(X_2)$.
\item\label{piaf4} There exists $r\in\R/\Z\smallsetminus\Q/\Z$ such that $\theta_r\sqcup\theta_r\sqcup\mathrm{id}_{Y_3}$ is a distorted element in $\PC^+_{\mathbf{Aff}}(X_3)$.
\end{enumerate}
Also, the following are equivalent:
\begin{enumerate}[(i')]
\item The group $\PC_{\mathbf{Aff}}^0(\R/\Z)$ admits a distorted cyclic subgroup;
\item There exists $r\in\R/\Z\smallsetminus\Q/\Z$ such that $\theta_r$ is a distorted element in $\PC_{\mathbf{Aff}}^0(\R/\Z)$.
\end{enumerate}
\end{cor}

What comes out of \cite{GuL} is the equivalence between (\ref{hypiafp}) and a slightly weaker analogue of (\ref{piaf4}): the existence of $n$ and a non-identity self-homeomorphism $f$ of $X_n$ acting on each $Y_i$ as either the identity or an irrational rotation, such that $f$ is distorted in $\PC^+_{\mathbf{Aff}}(X_n)$.

In the continuous case we did not make the orientation-preserving case explicit, because this is a trivial reduction, since the orientation-preserving subgroup has index 2. This is in contrast to the first case, where the piecewise orientation-preserving subgroup has infinite index.

\begin{proof}[Proof of Corollary \ref{eqque}]
For the second equivalence, one implication is trivial, and the other follows from Corollary \ref{distoafhom}.

Let us prove the first equivalence. (\ref{hypiaf})$\Leftarrow$(\ref{piaf3})$\Leftarrow$(\ref{piaf4})$\Rightarrow$(\ref{hypiafp})$\Rightarrow$(\ref{hypiaf}) is trivial.
That (\ref{hypiaf}) implies (\ref{hypiafp}) follows from the group embedding \[\PC_{\mathbf{Aff}}(\R/\Z)\to\PC^+_{\mathbf{Aff}}((\R/\Z)^\pm)\simeq\PC^+_{\mathbf{Aff}}(\R/\Z)\] from Lemma \ref{bousch}, and where the second, non-canonical isomorphism is induced by a piecewise affine transformation between $(\R/\Z)^\pm$ and $\R/\Z$. To obtain (\ref{piaf3})$\Rightarrow$(\ref{piaf4}), we again use this embedding but more carefully: for $X=X_2$, it maps $\theta_r\sqcup\mathrm{id}_{Y_2}$ to (after suitable identifications) the element $\theta_r\sqcup\mathrm{id}_{Y_2}\sqcup\theta_r\sqcup\mathrm{id}_{Y_4}$ of $\PC^+(X_4)$. After conjugating $Y_2\sqcup Y_4$ to $Y_3$ by a piecewise affine transformation and $Y_3$ to $Y_2$ by an affine transformation, we get the result.

It remains to prove (\ref{hypiaf})$\Rightarrow$(\ref{piaf3}). First, one uses Corollary \ref{distoafpi} to show that there exists a standard curve $X$ with 
with $c\in\PC_{\mathbf{Aff}}^0(X)$ such that the cyclic subgroup $\langle c\rangle$ is distorted $\PC_{\mathbf{Aff}}(X)$. Replacing $c$ with a power, we can suppose that $c$ preserves each component $Y$ of $X$, and that as an automorphism of $Y$, $c$ is either the identity or has infinite order. Hence $c$ is the identity on each noncompact component of $X$, and is an irrational rotation on every compact component of $X$. Then we fix one component $Y$ of $X$ on which $c$ acts as an irrational rotation, and we let $w$ be an isometry of $X$, acting as a reflection on $Y$ and as the identity outside. Let $q$ be equal to $c$ on $Y$ and the identity elsewhere. Then $c^nwc^{-n}w^{-1}=q^{2n}$. Hence $c$ being distorted, so is $q$. Then conjugating the complement of $Y$ by an affine transformation to a single standard circle, we obtain (\ref{piaf3}).
\end{proof}

In the case of the pseudogroup $\mathcal{C}^k$ of local diffeomorphisms of class $\mathcal{C}^k$, the classification of $\mathcal{C}^k$-modeled curves is ``trivial" in the sense that there are only two such connected curves up to isomorphism: the open interval and the circle.
 In particular, Corollary \ref{cormain2} yields:

\begin{cor}\label{Ckconju}
Fix $k\in\N$. Let $G$ act by piecewise-$\mathcal{C}^k$ (resp.\ parcelwise-$\mathcal{C}^k$) self-transformations on $\R/\Z$. Suppose that $G$ transfixes $\R/\Z$ for the corresponding partial action. Then there exists a cofinite subset $Y\subset\R/\Z$, a curve $X'$ with a structure of a $\mathcal{C}^k$-manifold and a $G$-action by $\mathcal{C}^k$-diffeomorphisms, a cofinite subset $Y'\subset X'$ and a $G$-biequivariant piecewise-$\mathcal{C}^k$ (resp.\ parcelwise-$\mathcal{C}^k$) homeomorphism $h:Y\to Y'$.
\end{cor}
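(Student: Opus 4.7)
The plan is to invoke Theorem~\ref{i_main} directly, taking $S$ to be the pseudogroup $\mathcal{C}^k$ (or, if one wants to keep track of orientation, its orientation-preserving sub-pseudogroup) over the model space $A=\R/\Z$. The circle is Hausdorff with no isolated point, and is tautologically $S$-modelled. An action $G\to\PC_S(\R/\Z)$ (piecewise or parcelwise) is the datum required by the Main Lemma~\ref{mainle}, which supplies the cofinite-partial action $\alpha_S$ of $G$ on $\R/\Z$. The hypothesis of the corollary is precisely that this partial action is transfixing, so the hypotheses of Theorem~\ref{i_main} are satisfied.

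Applying Theorem~\ref{i_main} produces a Hausdorff $S$-modelled space $X'$, a continuous $S$-preserving $G$-action on $X'$, and a cofinite $S$-preserving $G$-biequivariant isomorphism $h\colon Y\to Y'$ between cofinite subsets $Y\subset\R/\Z$ and $Y'\subset X'$. The next step is to unpackage what it means for $X'$ to be $S$-modelled when $S$ is the $\mathcal{C}^k$-pseudogroup on $\R/\Z$: the chart data gives $X'$ the structure of a $\mathcal{C}^k$-manifold of dimension $1$, and the fact that the $G$-action is $S$-preserving and continuous means exactly that $G$ acts by $\mathcal{C}^k$-diffeomorphisms. Similarly, $h$ being $S$-preserving and bijective between the cofinite sets $Y$ and $Y'$ says precisely that $h$ is a piecewise-$\mathcal{C}^k$ (resp.\ parcelwise-$\mathcal{C}^k$) homeomorphism in the sense of the corollary.

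The remaining point, and the only one requiring any topological input beyond Theorem~\ref{i_main}, is to check that $X'$ is a \emph{curve}, i.e.\ is homeomorphic to a finite disjoint union of copies of $\R$ and of the circle. Since $Y$ is cofinite in the compact $1$-manifold $\R/\Z$, it is a finite disjoint union of open intervals; its homeomorphic image $Y'$ therefore has the same finite number of connected components. As $X'\smallsetminus Y'$ is finite, $X'$ has only finitely many connected components as well, and each one is a connected Hausdorff $1$-manifold, hence homeomorphic to either $\R$ or to the circle. This yields the curve structure on $X'$.

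The only step that could cause trouble is the verification that the construction from the Main Lemma really fits the framework of Theorem~\ref{i_main}, i.e.\ that $\alpha_S$ is a topological $S$-preserving partial action (not merely set-theoretic): this is built into the construction of $\alpha_S$ in \S\ref{cpsh}--\S\ref{alphas}, because the domains are taken to be open and the local identifications of $\sigma$ with elements of $S$ are by definition the charts. Once this is acknowledged, everything reduces to bookkeeping and to the one-dimensional topological remark above. The bifurcation between the piecewise and parcelwise cases is invisible in the argument: both are covered by Theorem~\ref{i_main} once $S$ is chosen, and the corresponding regularity of $h$ is read off from the regularity recorded in the chart structure of $X'$.
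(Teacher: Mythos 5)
Your proposal is correct and follows essentially the same route as the paper: the paper deduces Corollary~\ref{Ckconju} by applying Theorem~\ref{main} (= Theorem~\ref{i_main}) to the partial action $\alpha_S$ for $S$ the $\mathcal{C}^k$-pseudogroup (via Corollary~\ref{cormain2}), and then identifies the resulting Hausdorff $S$-modeled space as a $\mathcal{C}^k$-curve. The only cosmetic difference is that the paper invokes the (trivial) classification of connected $\mathcal{C}^k$-modeled curves at this last step, whereas you argue directly that $X'$ is a $1$-manifold with finitely many components; both are fine.
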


In the continuous case, this yields the following, with a little refinement to pass from parcelwise to piecewise conjugacy:

\begin{cor}\label{conjupiecewiseck}
Let $G$ be group acting continuously on $\R/\Z$, with no finite orbit. Suppose the action is parcelwise-$\mathcal{C}^k$ (i.e., $G$ maps into $\PC_{\mathcal{C}^k}^0(\R/\Z)$. If $G$ transfixes $\R/\Z$ for the corresponding partial action $\alpha_{\mathcal{C}^k}$, then the action is conjugate (in $\PC_{\mathcal{C}^k}^0(\R/\Z)$) to a $\mathcal{C}^k$-action.

If moreover $G$ acts by piecewise-$\mathcal{C}^\ell$ self-transformations for some $\ell\le k$, then the conjugating map can be assumed to have the same property.
\end{cor}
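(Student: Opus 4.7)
The plan is to apply Corollary~\ref{cormain3} with $S=\mathcal{C}^k$ and $T$ the pseudogroup of all local homeomorphisms of $\R/\Z$, so that $\Aut_T(\R/\Z)=\mathrm{Homeo}(\R/\Z)$ and the hypothesis $G\to\PC_S^0(X)\cap\Aut_T(X)$ is exactly what is given. Since $G$ has no finite orbit on $\R/\Z$, the last clause of that corollary gives $Y=\R/\Z$, so I would obtain a finitely-charted $\mathcal{C}^k$-modeled curve $X'$ carrying a continuous $G$-action $g\mapsto\tilde g$ by $\mathcal{C}^k$-diffeomorphisms, together with a $G$-biequivariant parcelwise-$\mathcal{C}^k$-preserving homeomorphism $h\colon\R/\Z\to X'$.

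Next I would invoke the elementary classification of connected $\mathcal{C}^k$-modeled curves: up to $\mathcal{C}^k$-diffeomorphism they are $\R$ and the standard circle. Since $X'$ is homeomorphic via $h$ to $\R/\Z$, it is $\mathcal{C}^k$-diffeomorphic to the standard circle through some $\phi\colon X'\to\R/\Z$. Then $\psi:=\phi\circ h\in\PC_{\mathcal{C}^k}^0(\R/\Z)$, and $\psi$ conjugates the original action to the $\mathcal{C}^k$-action $g\mapsto\phi\circ\tilde g\circ\phi^{-1}$, which yields the first conclusion.

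For the refinement, assume furthermore that the action is piecewise-$\mathcal{C}^\ell$ with $\ell\le k$. Since $\phi$ is a $\mathcal{C}^k$-diffeomorphism and piecewise-$\mathcal{C}^\ell$ maps form an inverse monoid containing all $\mathcal{C}^k$-maps, it suffices to show that $h$ lies in the piecewise-$\mathcal{C}^\ell$ inverse submonoid, i.e., that both $h$ and $h^{-1}$ have left and right $\mathcal{C}^\ell$-jets at every point. Let $F\subset\R/\Z$ be the finite subset outside which $h$ is already $\mathcal{C}^k$-smooth, and let $F'\subset X'$ be the analogous exceptional set for $h^{-1}$. The only issue is at points $x\in F$ (resp.\ $x'\in F'$). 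Here I would use the dynamics: the $G$-orbit of $x$ is infinite while $F$ is finite, so I may pick $g\in G$ with $g(x)\notin F$. The $G$-biequivariance yields $h=\tilde g^{-1}\circ h\circ g$ near $x$. Each factor contributes regularity: $g$ is continuous (so $g(x^-)=g(x^+)=g(x)$) and piecewise-$\mathcal{C}^\ell$ at $x$ by hypothesis, $h$ is $\mathcal{C}^k$ on a neighborhood of $g(x)\notin F$, and $\tilde g^{-1}$ is globally $\mathcal{C}^k$. Composing, $h$ inherits left and right $\mathcal{C}^\ell$-jets at $x$; the symmetric argument, using some $\tilde g$ sending $x'\in F'$ outside $F'$, handles $h^{-1}$ at $x'\in F'$.

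The main obstacle I anticipate is conceptual rather than technical: Corollary~\ref{cormain3} already absorbs the construction of the globalized Hausdorff $\mathcal{C}^k$-model through the universal globalization together with the Hausdorff extraction, so the only new content is the refinement. That step amounts to observing that local regularity of $h$ at a singular point can be transported from a nearby regular point through the dynamics, crucially relying both on the continuity of each $g\in G$ and on the no-finite-orbit hypothesis (to produce a $g$ moving the singular point outside the exceptional set).
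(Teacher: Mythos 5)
Your proof is correct. The first statement is handled exactly as in the paper: apply Corollary~\ref{cormain3} with $S=\mathcal{C}^k$ and $T$ the pseudogroup of all local homeomorphisms, use the no-finite-orbit hypothesis to get $Y=\R/\Z$, and identify $X'$ with the standard circle via the (trivial) classification of $\mathcal{C}^k$-modeled curves. Where you diverge is the refinement. The paper obtains it for free by a second application of Corollary~\ref{cormain3}, this time taking $T$ to be the pseudogroup of local piecewise-$\mathcal{C}^\ell$ homeomorphisms: the hypothesis $G\subset\Aut_T(X)$ is exactly ``$G$ acts by piecewise-$\mathcal{C}^\ell$ self-homeomorphisms'', and the conclusion that the conjugating homeomorphism is $T$-preserving is exactly the piecewise-$\mathcal{C}^\ell$ property, the point being that the $T$-structure descends through the universal globalization. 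You instead prove this by hand, transporting one-sided $\mathcal{C}^\ell$-jets of $h$ at a singular point $x\in F$ via the identity $h=\tilde g^{-1}\circ h\circ g$ with $g(x)\notin F$ (and symmetrically for $h^{-1}$), which is sound: continuity of $g$ pushes a one-sided neighborhood of $x$ off the finite exceptional set, the one-sided derivatives are nonzero as products of nonzero factors, and the no-finite-orbit hypothesis supplies the required $g$. Your argument is more elementary and self-contained (it does not reuse the globalization machinery for a second pseudogroup), at the cost of being specific to this situation, whereas the paper's route is uniform in $T$ and is the mechanism behind all the ``the conjugating map can be taken with the same piecewise property'' statements in the article.
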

\begin{proof}
Both are direct applications of Corollary \ref{cormain3} with $S$ being the $\mathcal{C}^k$ pseudogroup. In the first case, we consider no $T$ (technically, this means we take $T$ as the pseudogroup of all local homeomorphisms). In the second case, we take $T$ as the pseudogroup of all self-homomorphisms that piecewise-$\mathcal{C}^\ell$.
\end{proof}

Lodha, Matte Bon and Triestino \cite{LMT} obtain a result which is very close to Corollary \ref{conjupiecewiseck}. Actually the result of Corollary \ref{conjupiecewiseck} (at least both the first statement and the second for $\ell=k$) is a consequence of their work, but they mainly formulate results for groups with Property FW or T, which, using Thurston's stability theorem on the one hand and Navas' theorem about groups with Property T of class $\mathcal{C}^{3/2}$ on the other hand, yield stronger conclusions \cite[Corollaries 1.3, 1.4]{LMT}.

As regards the piecewise projective case, let us write the corollaries:

\begin{cor}\label{corpro1}
Let $G$ be a group with a piecewise projective action on a finitely-charted projectively modeled curve $X$. Suppose that the corresponding partial action transfixes $X$. Then there is another finitely-charted projectively modeled curve $X'$, cofinite subsets $Y\subset X$ and $Y'\subset Y$, and a piecewise projective $G$-biequivariant homeomorphism $h:Y\to Y'$. 

Moreover, if $G$ acts continuously on $X$ with no finite orbit, we can choose $Y=X$, $Y'=X'$. If $G$ acts by $\mathcal{C}^1$ diffeomorphisms, we can choose $h$ to be of class $\mathcal{C}^1$. If $G$ both acts by $\mathcal{C}^1$ diffeomorphisms without finite orbit, we can choose $Y=X$, $Y'=X$, and $h$ a $\mathcal{C}^1$-diffeomorphism.
\end{cor}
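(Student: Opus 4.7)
My plan is to derive Corollary~\ref{corpro1} directly from Corollaries~\ref{cormain2} and~\ref{cormain3} applied to the projective pseudogroup $S=\mathbf{Proj}$, in the same way that the affine analogues (Corollaries~\ref{distoafpi} and~\ref{conjupiecewiseck}) were obtained from the same main theorem. No new geometric input is needed; the work reduces to selecting the correct pair $(S,T)$ of pseudogroups in each clause, using only two preliminary observations already present in the paper: the equality $\PC_{\mathbf{Proj}}=\PC_{\mathbf{Proj}\sharp}$ (so that parcelwise projective coincides with piecewise projective) and the pseudogroup inclusion $\mathbf{Proj}\subset\mathcal{C}^1$ (every projective local transformation is smooth).

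For the first, unconditional assertion I would apply Corollary~\ref{cormain2} with $S=\mathbf{Proj}$: the transfixing hypothesis on $\alpha_S$ produces a finitely-charted projectively modeled curve $X'$ carrying a continuous projective $G$-action, together with cofinite subsets $Y\subset X$, $Y'\subset X'$ and a $G$-biequivariant $\mathbf{Proj}$-preserving homeomorphism $h:Y\to Y'$; the equality $\PC_{\mathbf{Proj}}=\PC_{\mathbf{Proj}\sharp}$ upgrades $h$ from parcelwise to piecewise projective. For the continuous case with no finite $G$-orbit, I would switch to Corollary~\ref{cormain3} applied with $S=\mathbf{Proj}$ and $T$ the whole pseudogroup of local self-homeomorphisms: its ``in particular'' clause forces $Y=X$, and since the homeomorphism produced is onto the entire curve, we also have $Y'=X'$. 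For the $\mathcal{C}^1$ refinement, the same Corollary~\ref{cormain3} applies but now with $T=\mathcal{C}^1$: the hypothesis that $G$ acts by $\mathcal{C}^1$-diffeomorphisms places the image of $G$ inside $\PC_S^0(X)\cap\Aut_T(X)$, and the conclusion that $h$ is $T$-preserving says exactly that $h$ is a $\mathcal{C}^1$-diffeomorphism on its domain. The final combined case assumes both hypotheses simultaneously and uses the same application to obtain $Y=X$, $Y'=X'$ together with $h$ of class $\mathcal{C}^1$.

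I do not anticipate any genuine obstacle: the entire content of the corollary has been absorbed into the main theorem and its two preparatory corollaries. The one point that needs to be verified rather than merely quoted is the identification $\PC_{\mathbf{Proj}}=\PC_{\mathbf{Proj}\sharp}$ used to sharpen the conclusion from parcelwise to piecewise projective; this is the same verification that was implicit in the affine case, and it follows from the stability of $\mathbf{Proj}$ under concatenation in local charts of the circle, as already noted among the general comments comparing the various piecewise and parcelwise groups.
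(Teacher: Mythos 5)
Your proposal is correct and follows exactly the route the paper intends: the corollary is stated as a direct application of Corollaries \ref{cormain2} and \ref{cormain3} with $S=\mathbf{Proj}$ (and $T$ the full, resp.\ $\mathcal{C}^1$, pseudogroup for the refined clauses), precisely mirroring the affine derivations of Corollaries \ref{distoafpi} and \ref{distoafhom}. The only cosmetic remark is that for the first clause the map produced by Corollary \ref{cormain2} is already $\mathbf{Proj}$-preserving on its cofinite domain, so no parcelwise-to-piecewise upgrade is needed there; that identification is only required for the output of Corollary \ref{cormain3}, where you invoke it correctly.
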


Given the classification of projectively modeled curves and their automorphisms, we obtain as corollaries, also proved in the companion note \cite{CFW_short}:

\begin{cor}\label{psl2pro}
Under the same hypotheses, $G$ has a subgroup of finite index $H$ with a finite normal subgroup $Z$ such that $H/Z$ can be embedded as a subgroup of $\PSL_2(\R)^k$ for some $k$. If moreover $X$ is connected and $G$ acts continuously, we can choose $k=1$.
\end{cor}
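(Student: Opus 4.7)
The plan is to translate the piecewise projective action into a genuinely projective action on a Hausdorff finitely-charted projectively modeled curve via Corollary~\ref{corpro1}, and then to exploit the classification of connected projectively modeled curves and their automorphism groups from Appendix~\ref{s_projcur}.

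First I apply Corollary~\ref{corpro1} to obtain a finitely-charted projectively modeled curve $X'$ carrying a continuous $G$-action by projective automorphisms, together with a cofinite biequivariant piecewise projective identification between cofinite subsets of $X$ and $X'$. This yields a homomorphism $\rho\colon G\to\Aut_{\mathbf{Proj}}(X')$. Let $C_1,\dots,C_k$ denote the (finitely many) connected components of $X'$; the stabilizer $H_0\le G$ of each component under the induced permutation action on $\{C_1,\dots,C_k\}$ is of finite index, and $\rho$ restricts to a map $H_0\to\prod_{i=1}^{k}\Aut_{\mathbf{Proj}}(C_i)$.

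Next, I invoke the classification in Appendix~\ref{s_projcur} (summarized in Example~\ref{exfwpp}): each group $\Aut_{\mathbf{Proj}}(C_i)$ is either metabelian or isomorphic to a finite central covering of $\PSL_2(\R)$. In either case there is a finite-index subgroup $A_i\le\Aut_{\mathbf{Proj}}(C_i)$ and a finite normal subgroup $N_i\triangleleft A_i$ such that $A_i/N_i\hookrightarrow\PSL_2(\R)$: in the covering case, take $A_i=\Aut_{\mathbf{Proj}}(C_i)$ with $N_i$ its finite center; in the metabelian case, one passes to the identity component (of finite index) and then, modding out by a finite normal subgroup, lands inside the affine subgroup of $\PSL_2(\R)$, as can be read off directly from the explicit list in the appendix. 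Setting $H=\rho^{-1}(\prod_i A_i)$ and $Z=\rho^{-1}(\prod_i N_i)$, we obtain finite-index $H\le G$, finite normal $Z\triangleleft H$, and an embedding $H/Z\hookrightarrow\prod_i A_i/N_i\hookrightarrow\PSL_2(\R)^k$.

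For the final assertion, assume $X$ is connected and $G$ acts continuously. Let $F\subset X$ be the (finite) union of finite $G$-orbits, let $G_0\le G$ be the finite-index subgroup fixing $F$ pointwise, and let $G_1\le G_0$ be the (finite-index) stabilizer of a single connected component $U$ of $X\smallsetminus F$. Then $G_1$ acts continuously on the connected curve $U$ with no finite orbit, and still piecewise projectively with transfixed partial action (since the complements of the domains of definition only shrink by a finite set). The moreover clause of Corollary~\ref{corpro1} applied to $G_1$ and $U$ therefore yields a projectively modeled $X'$ homeomorphic to $U$, hence connected; running the argument above with this data forces $k=1$. The main step requiring care is the classification step: verifying that the ``metabelian'' case really admits the claimed finite-index-modulo-finite-normal embedding into $\PSL_2(\R)$, a verification which is routine once the explicit list of connected projectively modeled curves in Appendix~\ref{s_projcur} is in hand.
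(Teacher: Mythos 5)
Your overall strategy coincides with the paper's: use Corollary~\ref{corpro1} to replace the piecewise projective action by a genuine projective action on a finitely-charted curve $X'$, pass to the finite-index stabilizer of the components, and then read off the conclusion from the classification of automorphism groups in Appendix~\ref{s_projcur}. However, one step of your verification is false as stated. In the ``covering case'' you take $A_i=\Aut_{\mathbf{Proj}}(C_i)$ and assert that its quotient by its finite center embeds into $\PSL_2(\R)$. By the appendix the full automorphism group in that case is $\PGL_2^{(n)}(\R)$, not $\PSL_2^{(n)}(\R)$, and already for $n=1$ the claim fails: $\Aut(\mathbf{P}^1_\R)=\PGL_2(\R)$ has trivial center and contains a Klein four-group (the images of $\mathrm{diag}(1,-1)$ and of $\left(\begin{smallmatrix}0&-1\\1&0\end{smallmatrix}\right)$ are distinct commuting involutions in $\PGL_2(\R)$), whereas $\PSL_2(\R)$ contains no subgroup isomorphic to $(\Z/2\Z)^2$, since two commuting involutions of the hyperbolic plane must share their unique fixed point and hence coincide. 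The repair is immediate and is exactly what the paper does: pass to the further finite-index subgroup mapping into each identity component $\Aut(C_i)^\circ$, which is one of $\R$, $\R/\Z$, $\R\rtimes\R$, or $\PSL_2^{(m)}(\R)$; the first three embed into $\PSL_2(\R)$ directly and the last does after dividing by its finite center. You already do this in the metabelian case, so the omission is only in the covering case.

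Two smaller remarks. First, your reduction in the metabelian case needs no quotient by a finite normal subgroup at all: $\R$, $\R/\Z\simeq\mathrm{PSO}(2)$ and $\R\rtimes\R\simeq\Aff^+(\R)$ all embed into $\PSL_2(\R)$ as they stand. Second, in the $k=1$ argument your assertion that the union $F$ of the finite $G$-orbits is finite is not justified in general (an involution, for instance, can have infinitely many fixed points); the intended reading is simply that when $X$ is connected, $G$ acts continuously and has no finite orbit, the ``moreover'' clause of Corollary~\ref{corpro1} produces $X'$ homeomorphic to $X$, hence connected, so $k=1$.
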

\begin{proof}
By Corollary \ref{corpro1}, we can suppose that $G\subset\Aut_{\mathbf{Proj}}(X)$. Let $(X_i)$ be the connected components of $X$; by Appendix \ref{s_projcur}, the automorphism group of each $X_i$ has finitely many components.
There exists a subgroup $H$ of finite index in $G$ stabilizing each component, and mapping into $\Aut(X_i)^\circ$. By Appendix \ref{s_projcur}, $\Aut(X_i)$ is isomorphic to either $\R$, $\R/\Z$, $\R\rtimes\R$, or $\PSL_2(\R)_m$ (the $m$-fold connected covering of $\PSL_2(\R)$). The first three, as well as the quotient of the latter by its center, embed as subgroups into $\PSL_2(\R)$. We thus obtain the conclusion.
\end{proof}

\begin{cor}\label{notpro}
For every finitely-charted curve $X$, the group $\PC_{\mathbf{Proj}}(X)$ has no infinite subgroup with Kazhdan's Property T.
\end{cor}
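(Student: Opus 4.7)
The plan is to argue by contradiction, combining Property T $\Rightarrow$ Property FW with the structure theorem Corollary \ref{psl2pro}, and then ruling out the target via a classical fact about $\PSL_2(\R)$. Suppose for contradiction that $G\le\PC_{\mathbf{Proj}}(X)$ is an infinite subgroup with Kazhdan's Property T. Since Property T implies Property FW, every commensurating action of $G$ is transfixing; in particular the cofinite-partial action $\alpha_{\mathbf{Proj}}$ of $G$ on $X$ (see \S\ref{alphas}) is transfixing. Corollary \ref{psl2pro} then yields a finite index subgroup $H\le G$ and a finite normal subgroup $Z\triangleleft H$ such that $H/Z$ embeds into $\PSL_2(\R)^k$ for some $k$.

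Property T passes to finite index subgroups and to quotients, so $H/Z$ inherits Property T, and hence so does each projection $\Gamma_i$ of $H/Z$ to the $i$-th factor of $\PSL_2(\R)^k$. The main step --- and the only one that draws on input from outside the earlier parts of this paper --- is the classical fact that every abstract subgroup $\Gamma$ of $\PSL_2(\R)$ with Property T is finite. I would prove this via the hyperbolic plane: $\PSL_2(\R)$ acts isometrically on $\mathbf{H}^2$, a complete CAT(0) space of negative type, and the associated affine isometric action on a Hilbert space $\mathcal{H}$ has $\Gamma$-bounded orbits iff the original action on $\mathbf{H}^2$ does. Property T forces a $\Gamma$-fixed point in $\mathcal{H}$, hence bounded $\Gamma$-orbits in $\mathbf{H}^2$, hence by Bruhat--Tits a $\Gamma$-fixed point in $\mathbf{H}^2$; its stabilizer is conjugate to $\mathrm{PSO}(2)\cong\R/\Z$, which is abelian, hence amenable, and an amenable group with Property T is finite.

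Since each $\Gamma_i$ is finite, $H/Z$ embeds into a product of finite groups and is itself finite; together with finiteness of $Z$ and of $G/H$ this gives $G$ finite, the desired contradiction. The main obstacle, as indicated, is the classical hyperbolic-plane step, which requires some care in setting up the negative-type embedding so that having a fixed point in $\mathcal{H}$ really translates back to bounded orbits in $\mathbf{H}^2$; all the substantial work specific to this paper --- conjugating a piecewise projective action to an honest projective one and classifying the automorphism groups of projectively modeled curves --- has already been packaged into Corollaries \ref{corpro1} and \ref{psl2pro}, which is what makes the present corollary short.
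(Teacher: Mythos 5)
Your proposal is correct and follows essentially the same route as the paper: reduce via Corollary \ref{psl2pro} to the nonexistence of infinite Property~T subgroups of $\PSL_2(\R)$, and settle that by the negative-type (Faraut--Harzallah) structure of the hyperbolic metric, which forces such a subgroup into a conjugate of the abelian group $\mathrm{PSO}_2(\R)$. The only difference is cosmetic: the paper cites Faraut--Harzallah for the hyperbolic-plane step that you spell out by hand.
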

\begin{proof}
If we suppose so, using Corollary \ref{psl2pro} and the fact that Kazhdan's Property T passes to finite index subgroups (and obviously to quotients), we would deduce the existence of 
an infinite subgroup with Property T in $\PSL_2(\R)$. But it is well-known that there is no such group. Indeed, by Faraut-Harzallah \cite{FH}, it would be conjugate into the maximal compact subgroup $\mathrm{PSO}_2(\R)$, which is abelian, a contradiction.
\end{proof}

This is new even in the continuous case. See also Example \ref{exfwpp}.

As suggested by A.~Valette, we have the following more precise consequence:

\begin{cor}
For every finitely-charted curve $X$, every subgroup of $\PC_{\mathbf{Proj}}(X)$ with Property FW, has the Haagerup Property.
\end{cor}
\begin{proof}
Indeed, the Haagerup Property is inherited from finite index subgroups and extensions with finite kernels, and is true for subgroups of $\mathrm{PSL}_2(\R)$~\cite{GHW}.
\end{proof}

\section{More on piecewise projective self-homeomorphisms}

\subsection{Construction of actions}\label{conoac}

The results of this paper were mainly obtained by using the formalism of partial action and the notion of universal globalization. We now provide more explicit actions for the group of piecewise projective self-homeomorphisms of the circle. This was actually our initial approach before realizing how that formalism could avoid a more computational approach. However, we believe it is interesting to write down these formulas, since they can convey more intuition about the proofs, and since they indicate what is hidden behind taking universal globalizations.

For a standard curve $X$, denote $X^\pm=X\times\{+,-\}$ (see \S\ref{doubpoints}); for $s\in X$, we usually write $s^+$ and $s^-$ rather $(s,+)$ and $(s,-)$. For $x=(s,\eps)\in X^\pm$, we write $\hat{x}=(s,-\eps)$ (for obvious sign conventions). Define $\mathcal{L}^2(X)=X^\pm\times\R_{>0}\times\R$.

Let $X,Y$ be standard curves. The first and second one-sided derivatives of piecewise-$\mathcal{C}^2$ functions $f$ on $X$ can be interpreted as functions $f',f'':X^\pm\to\R$. Define $\CPD_2(X,Y)$ as the set of continuous, piecewise-$\mathcal{C}^2$ functions, whose one-sided first derivatives do not vanish. For $f\in\CPD_2(X,Y)$, define $f_{(2)}:\mathcal{L}^2(X)\to \mathcal{L}^2(Y)$ by 
\[f_{(2)}(x,t,u)=\left(f(x),\;\frac{f'(x)}{f'(\hat{x})}t,\;
\frac{1}{f'(x)}u+\frac{f''(x)}{2f'(x)^2}-\frac{f''(\hat{x})}{2f'(x)f'(\hat{x})}t^{-1}\right).\]
A simple computation shows if $Z$ is another standard curve and $g:Y\to Z$ is continuous and piecewise of class $\mathcal{C}^2$, then
that $(g\circ f)_{(2)}$ and $g_{(2)}\circ f_{(2)}$ are both equal: indeed, they are equal to
\[(x,t,u)\mapsto\left(g(f(x)),\frac{f'(x)g'(f(x))}{f'(\hat{x})g'(f(\hat{x}))}t,\;a(x)u+b(x)-c(x)t^{-1}\right),\]
where 
\[a(x)=\frac{1}{f'(x)g'(f(x))},\;b(x)=\frac{f''(x)}{2g'(f(x))f'(x)^2}+\frac{g''(f(x))}{2g'(f(x))^2};\]
\[c(x)=\frac{1}{2g'(f(x))f'(x)}\left(\frac{f''(\hat{x})}{f'(\hat{x})}+\frac{g''(f(\hat{x}))f'(\hat{x})}{g'(f(\hat{x}))}\right).\]

Let $X,Y$ be standard curves. In the following proposition, we use this action to define a natural pull-back for functions $X^\pm\to\R_{>0}\times\R$, which is used throughout the sequel.

\begin{prop}\label{pnumu}
Let $\mu=(\mu_1,\mu_2)$ be a function $Y^\pm\to\R_{>0}\times\R$ (with $\mu_1:Y^\pm\to\R_{>0}$ and $\mu_2:Y^\pm\to\R$), and $P_\mu\subset \mathcal{L}^2(X)$ its graph. Then $f_{(2)}^{-1}(P_\mu)=P_{f^*\mu}$, where for all $x\in X$ we have 
\[(f^*\mu)_1(x)=\frac{f'(\hat{x})}{f'(x)}\mu_1(f(x))\] and
\[(f^*\mu)_2(x)=f'(x)\mu_2(f(x))-\frac{f''(x)}{2f'(x)}+\frac{f''(\hat{x})f'(x)}{2f'(\hat{x})^2}\mu_1(f(x))^{-1}.\]
\end{prop}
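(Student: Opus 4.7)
The proof is essentially an unwinding of the definition of $f_{(2)}$ together with a direct manipulation. Here is the plan.

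First I would observe that by the explicit formula for $f_{(2)}$ given just above, the condition $(x,t,u) \in f_{(2)}^{-1}(P_\mu)$ is equivalent to the conjunction of the two scalar equations
\[\frac{f'(x)}{f'(\hat{x})}\, t = \mu_1(f(x)), \qquad \frac{1}{f'(x)}\,u + \frac{f''(x)}{2 f'(x)^2} - \frac{f''(\hat{x})}{2 f'(x) f'(\hat{x})}\, t^{-1} = \mu_2(f(x)).\]
The first of these is immediately solvable for $t$: one gets
\[t = \frac{f'(\hat{x})}{f'(x)}\,\mu_1(f(x)),\]
which is the asserted formula for $(f^*\mu)_1(x)$. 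Since the one-sided derivative $f'(x)$ is nonzero by assumption (elements of $\CPD_2$ have nonvanishing one-sided derivatives), this step is legitimate, and $t$ is now determined as a function of $x$ alone.

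Next I would substitute the value of $t^{-1} = \frac{f'(x)}{f'(\hat{x})\,\mu_1(f(x))}$ into the second equation and solve for $u$. This gives
\[u = f'(x)\mu_2(f(x)) - \frac{f''(x)}{2 f'(x)} + \frac{f''(\hat{x})}{2 f'(\hat{x})}\cdot\frac{f'(x)}{f'(\hat{x})\,\mu_1(f(x))},\]
and the last term simplifies to $\frac{f''(\hat{x}) f'(x)}{2 f'(\hat{x})^2}\,\mu_1(f(x))^{-1}$, which matches the asserted formula for $(f^*\mu)_2(x)$ verbatim. This proves both inclusions at once, since each step is an equivalence.

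I do not expect a real obstacle here: the whole content is algebraic manipulation, and the only mild book-keeping concern is to remember that $\hat{x}$ denotes the companion point of $x$ in the doubled curve $X^\pm$ (so that $f'(x)$ and $f'(\hat{x})$ are the two one-sided derivatives at the underlying base point), and that all denominators $f'(x)$, $f'(\hat{x})$, $\mu_1(f(x))$ are nonzero by the standing hypotheses. The fact that $f_{(2)}$ was shown, just before the proposition, to be a well-defined bijection on $\mathcal{L}^2$ (via the functoriality computation $(g\circ f)_{(2)} = g_{(2)}\circ f_{(2)}$) ensures that the equivalence really describes a graph, and no further compatibility check is needed.
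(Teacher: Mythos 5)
Your proposal is correct and is essentially identical to the paper's own proof: both unwind the definition of $f_{(2)}$, solve the first coordinate equation for $t$, and substitute $t^{-1}$ into the second to solve for $u$, with each step an equivalence. Nothing is missing.
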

\begin{proof}
We have $(x,t,u)\in f_{(2)}^{-1}(P_\mu)$ if and only if $(x',t',u'):=f_{(2)}(x,t,u)\in P_\mu$. This means that $\mu_1(x')=t'$ and $\mu_2(x')=u'$. 
This means that 
$\mu_1(f(x))=\frac{f'(x)}{f'(\hat{x})}t$ and $\mu_2(f(x))=\frac{1}{f'(x)}u+\frac{f''(x)}{2f'(x)^2}-\frac{f''(\hat{x})}{2f'(x)f'(\hat{x})}t^{-1}$.
In turn, this means that 
$t=\frac{f'(\hat{x})}{f'(x)}\mu_1(f(x))$ and 
$u=f'(x)\mu_2(f(x))-\frac{f''(x)}{2f'(x)}+\frac{f''(\hat{x})}{2f'(\hat{x})}t^{-1}$. Thus $f^{-1}(P_\mu)$ is indeed the graph of the given function.
\end{proof}

From the covariant functoriality of $f\mapsto f_{(2)}$, we immediately deduce the contravariant functoriality of $f\mapsto f^*$, acting on all $(\R_{>0}\times\R$)-valued functions.

In addition, define an involution $\tau_X:\mathcal{L}^2(X)\to \mathcal{L}^2(X)$ by $\tau_X(x,t,u)=(\hat{x},t^{-1},-tu)$. Denote $\tau(t,u)=(t^{-1},-tu)$. Then a computation shows that $\tau_Y\circ f_{(2)}=f_{(2)}\circ\tau_X$. 

Let $\mathcal{A}^2(X)$ be the set of functions $\nu:X^\pm\to\R_{>0}\times\R$, that take the value $(1,0)$ outside a finite subset, and such that $\nu(\hat{x})=\tau(\nu(x))$ for all $x\in X^\pm$. 
Equivalently, this symmetry condition means that the graph of $\nu$ is $\tau_X$-invariant. 
Hence any proper function $f\in\CPD_2(X,Y)$ induces $f^*:\mathcal{A}^2(Y)\to \mathcal{A}^2(X)$.


On a standard curve, denote by $\nu_0^X\in \mathcal{A}^2(X)$ the ``trivial" constant function $(1,0)$.

\begin{lem}\label{nu0inv}
Let $V,V'$ be standard curves and $f$ a piecewise projective homeomorphism $V\to V'$. Then $f$ is projective if and only $f^*\nu^{V'}_0=\nu^V_0$.
\end{lem}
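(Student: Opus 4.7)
The plan is to unfold the pull-back formula from Proposition \ref{pnumu} applied to the constant function $\mu=\nu_0^{V'}=(1,0)$, translate the equation $f^*\nu_0^{V'}=\nu_0^V$ into pointwise conditions on the one-sided derivatives of $f$ at each point of $V$, and then invoke the standard rigidity of the projective group (two homographies with the same $2$-jet at a point are equal) to upgrade a piecewise projective map with matching $2$-jets at every break point into a genuinely projective map.

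Step 1 (decoding the condition). Substituting $\mu_1\equiv 1$, $\mu_2\equiv 0$ in Proposition \ref{pnumu} gives, for every $x\in V^\pm$,
\[
(f^*\nu_0^{V'})_1(x)=\frac{f'(\hat{x})}{f'(x)},\qquad
(f^*\nu_0^{V'})_2(x)=-\frac{f''(x)}{2f'(x)}+\frac{f''(\hat{x})f'(x)}{2f'(\hat{x})^2}.
\]
Thus $f^*\nu_0^{V'}=\nu_0^V$ if and only if the identity $f'(x)=f'(\hat{x})$ holds at every $x$, and, once this is substituted, also $f''(x)=f''(\hat{x})$ at every $x$. In other words, the left- and right-sided first and second derivatives of $f$ coincide at every point of $V$.

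Step 2 (easy direction). If $f$ is projective on each component of $V$, it is $\mathcal{C}^\infty$ there, so left and right derivatives of all orders trivially agree and $f^*\nu_0^{V'}=\nu_0^V$.

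Step 3 (main direction). Assume $f^*\nu_0^{V'}=\nu_0^V$. Since $f$ is piecewise projective, there is a finite subset $B\subset V$ such that on each connected component of $V\smallsetminus B$, $f$ equals the restriction of some homography. Pick $b\in B$ and let $h_\pm$ be the two homographies of $V'$ agreeing with $f$ on small one-sided neighbourhoods of $b$. Continuity of $f$ yields $h_-(b)=h_+(b)$, and Step 1 gives $h_-'(b)=h_+'(b)$ and $h_-''(b)=h_+''(b)$. The rigidity lemma (two M\"obius transformations with the same value, first derivative and second derivative at a common point are equal, proved in three lines by conjugating the shared fixed point of $g=h_+\circ h_-^{-1}$ to~$0$ and reading off the coefficients from $g(0)=0$, $g'(0)=1$, $g''(0)=0$) then forces $h_-=h_+$. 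Hence $b$ is a removable break point; doing this for each $b\in B$ shows that $f$ is projective on each component of $V$.

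The only nontrivial ingredient is the rigidity fact in Step 3, but this is an elementary computation in $\mathrm{PSL}_2(\R)$ rather than a genuine obstacle; everything else is a routine unpacking of Proposition \ref{pnumu}.
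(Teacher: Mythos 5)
Your proof is correct and follows essentially the same route as the paper: unfold the pull-back formula from Proposition \ref{pnumu} at $\mu=(1,0)$, observe that the condition $f^*\nu_0^{V'}=\nu_0^V$ says exactly that the one-sided $2$-jets of $f$ match at every point (i.e.\ $f$ is of class $\mathcal{C}^2$), and conclude by the rigidity of homographies. The only difference is cosmetic: the paper leaves the final step ($\mathcal{C}^2$ $+$ piecewise projective $\Rightarrow$ projective) implicit, referring back to the discussion in Example \ref{exfirstco}, whereas you spell out the underlying $\PSL_2(\R)$ $2$-jet rigidity computation explicitly.
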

\begin{proof}
By definition, for any $f\in\CPD^2(V,V')$, we have
\begin{align*}\big(f^*\nu_0^{V'}\big)(x)&=\left(\frac{f'(\hat{x})}{f'(x)},-\frac{f''(x)}{2f'(x)}+
\frac{f''(\hat{x})f'(x)}{2f'(\hat{x})^2}\right)\\
=& \left(\frac{f'(\hat{x})}{f'(x)},\frac{
f''(\hat{x})-f''(x)+\frac{f''(\hat{x})}{f'(\hat{x})^2}(f'(x)^2-f'(\hat{x})^2)}{2f'(x)}\right).
\end{align*}
It follows immediately that $f$ is of class $\mathcal{C}^2$ at $x\in V$ if and only if $f^*\nu_0^{V'}(x)=(1,0)$, which equals $\nu_0^V(x)$. Therefore, $f$ is of class $\mathcal{C}^2$ on $V$ if and only if $f^*\nu_0^{V'}=\nu_0^V$.

In particular, since we assume that $f$ is a piecewise projective homeomorphism, $f$ is projective on $V$ if and only if $f^*\nu_0^{V'}=\nu_0^V$.
\end{proof}

\begin{lem}\label{uniqpro}
Let $X$ be a standard curve, $x\in X$ and $\nu\in \mathcal{A}^2(X)$. Suppose that, at the neighborhood of $x$, the function $\nu$ is invariant by homographies that are close enough to the identity. Then $\nu=\nu_0^X$ around $x$.
\end{lem}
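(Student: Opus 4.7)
The plan is to exploit the full three-parameter group of local projective transformations near $x$ by splitting it into two useful sub-families: small translations (to force local constancy) and the stabilizer of $x$ (to pin down the value).

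First, I observe that small translations $f\colon t\mapsto t+h$ are homographies close to the identity; they are globally smooth with $f'\equiv 1$ and $f''\equiv 0$, so in particular $f'(\hat y)=f'(y)$ everywhere and the pullback formula of Proposition~\ref{pnumu} collapses to $(f^*\nu)(y)=\nu(y+h)$. The invariance hypothesis then forces $\nu(y+h)=\nu(y)$ for every $y$ close to $x$ and every sufficiently small $h$, so $\nu$ is locally constant on the $+$ sheet around $x$.

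Second, I restrict attention to homographies fixing $x$ and close to the identity. In a local affine chart centered at $x=0$, these are of the form $f(t)=at/(ct+1)$ with $(a,c)$ near $(1,0)$, giving $f'(0)=a$ and $f''(0)=-2ac$; hence $f'(x)$ and $f''(x)$ can be prescribed independently near $(1,0)$. Since such $f$ is smooth at $x$, the identities $f'(\hat x)=f'(x)$ and $f''(\hat x)=f''(x)$ hold, and, writing $\nu(x)=(a_1,a_2)$, the formula of Proposition~\ref{pnumu} evaluated at $x$ reduces to
\[(f^*\nu)_1(x)=a_1,\qquad (f^*\nu)_2(x)=f'(x)\,a_2+\frac{f''(x)}{2f'(x)}\bigl(a_1^{-1}-1\bigr).\]
Invariance then yields the relation $\bigl(1-f'(x)\bigr)a_2=\frac{f''(x)}{2f'(x)}\bigl(a_1^{-1}-1\bigr)$ for all $(f'(x),f''(x))$ near $(1,0)$. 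Taking $f''(x)=0$ and $f'(x)\neq 1$ forces $a_2=0$; substituting back and taking $f''(x)\neq 0$ then forces $a_1=1$. Thus $\nu(x)=(1,0)$, and combined with the first step, $\nu\equiv(1,0)=\nu_0^X$ on the $+$ sheet in a neighborhood of $x$. The $\tau$-symmetry built into $\mathcal{A}^2(X)$ yields $\nu(\hat y)=\tau(1,0)=(1,0)$ on the $-$ sheet, so $\nu=\nu_0^X$ on a full neighborhood of $x$ in $X^\pm$.

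I foresee no serious obstacle: once Proposition~\ref{pnumu} is applied, the argument is purely algebraic and local. The only conceptual point to verify is that, within homographies close to the identity, there are enough translations to propagate the value and enough elements in the stabilizer of $x$ to independently vary $f'(x)$ and $f''(x)$—which is the familiar fact that the local projective group is three-dimensional and acts transitively with two-dimensional point stabilizers.
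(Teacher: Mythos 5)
Your proof is correct and follows essentially the same route as the paper: small translations combined with the stabilizer of $x$ in the local projective group, fed into the pullback formula of Proposition~\ref{pnumu}. The only (harmless) difference is organizational: the paper gets $\nu_1=1$ near $x$ from translations together with the finite support of $\nu_1-1$ and then uses only homotheties fixing $x$ to kill $\nu_2(x)$, whereas you extract local constancy of both coordinates from translations and then use the full two-parameter stabilizer of $x$ to pin down $\nu(x)=(1,0)$ --- note that either of your two steps alone would already suffice once combined with the finite-support condition built into $\mathcal{A}^2(X)$.
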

\begin{proof}
More explicitly, since this is local at $x$, we can suppose that $X\subset\R$. For all intervals $I,J$ around $x$ such that the closure in $\R$ of $I$ is contained in $J$, we can consider the subset of homographies $f\in\mathrm{PSL}_2(\R)$ mapping $I$ into $J$. This is a neighborhood $W_{I,J}$ of the identity in $\mathrm{PSL}_2(\R)$. Precisely, the assumption is that we assume that for some such $I,J$, the identity element has a subneighborhood $W$ contained in $W_{I,J}$ such that $f^*\nu=\nu$ on $I$ for all $f\in W$.

We can suppose that $x=0$. Then the translation $T_a:x\mapsto x+a$ belongs to $W$ for $a$ small enough, say $|a|\le a_0$. The local condition $T_a^*\nu=\nu$, read at the first coordinate, implies that $\nu_1(t^\pm)=\nu_1(t^\pm+a)$ for every $t$ small enough (say, $|t|\le t_0$: note that this does not depend on $a$). Then, since $\nu_1-1$ is finitely supported, we deduce that $\nu_1-1$ vanishes at the neighborhood of $0^\pm$.

Let us now rewrite the definition of $f^*\mu$ when $f$ is of class $\mathcal{C}^2$, for each $t$ such that $\mu_1(f(t))=1$. Namely it simplifies to
$(f^*\mu)(t)=\big(1,f'(t)\mu_2(f(t))\big)$. We now choose a homothety $L_c(t)=ct$ with $c>1$ close enough to $c$ to ensure $L_c^*\nu=\nu$ at the neighborhood of 0, and in particular at zero. 
Thus we have $\nu_2(0^{\pm})=c\nu_2(0^\pm)$, which implies $\nu_2(0^\pm)=0$. 

Hence $(\nu_1,\nu_2)$ takes the value $(1,0)$ at $0$.
Since $\nu$ and $\nu_0^X$ coincide outside a finite subset, they thus coincide at the neighborhood of $0$.
\end{proof}

\begin{defn}\label{d_compatible}
Denote by $\pi_0^X$ (or $\pi_0$ when the context is clear) the standard projective structure on a standard curve $X$.
On a standard curve with standard projective structure $\pi_0$, we say that a projective structure $\pi$ is compatible if the identity $(X,\pi_0)\to (X,\pi)$ is piecewise projective. We say that it is $\mathcal{C}^1$-compatible if in addition this identity map is of class $\mathcal{C}^1$.
\end{defn}

We reach the goal of this preparatory work: encoding a compatible projective structure in an element of $\mathcal{A}^2(X)$:

\begin{prop}\label{projnu}
Let $X$ be a standard curve and $\mu\in \mathcal{A}^2(X)$. Define a system of charts on $X$ by considering piecewise projective homeomorphisms $h:U\to V$, with $U$ open in $X$ and $V$ a standard curve, such that $h^*\nu_0^V=\mu|_U$.
 Then this defines a projective structure $\pi=\pi_\mu$ on $X$, such that the identity map is a piecewise projective homeomorphism $(X,\pi_0^X)\to (X,\pi)$. 

Given $f\in\PC^0_{\mathbf{Proj}}(X,Y)$ with $Y$ another standard curve and $\nu\in \mathcal{A}^2(Y)$, the map $f$ is a projective isomorphism $(X,\pi_\mu)\to (Y,\pi_\nu)$ if and only if $f^*\nu=\mu$.

Conversely, every projective structure $\pi$ on $X$, such that the identity map $(X,\pi_0^X)\stackrel{i}\to (X,\pi)$ is a piecewise projective homeomorphism, has the form $\pi_\mu$ for a unique $\mu\in \mathcal{A}^2(X)$.
\end{prop}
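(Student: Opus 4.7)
The plan is to prove the three assertions in sequence, using Lemma \ref{nu0inv} (characterizing projective maps by $f^*\nu_0 = \nu_0$) and the contravariant functoriality of $f \mapsto f^*$ (inherited from the covariant functoriality of $f \mapsto f_{(2)}$ verified just before Proposition \ref{pnumu}) as the main tools. In particular, if $h \colon U \to V$ is piecewise projective with $h^* \nu_0^V = \mu$ on $U$, then by functoriality $(h^{-1})^* \mu = \nu_0^V$ on $h(U)$, a fact I will use repeatedly.

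For the first statement, I will check that the proposed collection of charts forms a genuine atlas. For compatibility, given two charts $h_i \colon U_i \to V_i$ ($i=1,2$) with $h_i^* \nu_0^{V_i} = \mu$ on $U_i$, functoriality yields
\[(h_2 \circ h_1^{-1})^* \nu_0^{V_2} = (h_1^{-1})^* (h_2^* \nu_0^{V_2}) = (h_1^{-1})^* \mu = \nu_0^{V_1}\]
on $h_1(U_1 \cap U_2)$, so Lemma \ref{nu0inv} forces the transition to be projective. To cover $X$, I need, for each $x \in X$, a piecewise projective local homeomorphism $h$ around $x$ with $h^* \nu_0^V(x^+) = \mu(x^+)$; since $\mu$ coincides with $\nu_0^X$ outside a finite subset, I shrink $U$ so that $\mu - \nu_0^X$ is supported only at $x$, and then assemble $h$ by gluing two homography germs at $x$ realizing the prescribed parameters $(\mu_1(x^+), \mu_2(x^+))$; the symmetry condition $\mu(\hat{x}) = \tau(\mu(x))$ ensures that matching at $x^+$ automatically matches at $x^-$. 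Piecewise projectivity of the identity $(X, \pi_0^X) \to (X, \pi_\mu)$ is then immediate since in the standard chart of $\pi_0^X$, the identity reads as the chart $h$ just built.

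For the second statement, projectivity of $f$ is local, so I check it via charts $h$ of $\pi_\mu$ near $x$ and $k$ of $\pi_\nu$ near $f(x)$. Functoriality gives $(k \circ f \circ h^{-1})^* \nu_0 = (h^{-1})^* f^* k^* \nu_0 = (h^{-1})^*(f^*\nu)$, which equals $\nu_0$ iff $f^*\nu = \mu$ on the domain of $h$; Lemma \ref{nu0inv} converts this equality into the projectivity of $k \circ f \circ h^{-1}$. For the third statement, I define $\mu$ chart-by-chart by $\mu|_W = \psi^* \nu_0^V$ for any chart $\psi \colon W \to V$ of $\pi$. Independence of the choice of $\psi$ uses that any two $\pi$-charts differ by a projective transition, so Lemma \ref{nu0inv} leaves the pullback unchanged; membership in $\mathcal{A}^2(X)$ follows from piecewise projectivity of $\psi$ (so that $\mu = \nu_0^X$ outside the finitely many singularities of $\psi$, again by Lemma \ref{nu0inv}), from the intertwining $\tau_V \circ \psi_{(2)} = \psi_{(2)} \circ \tau_W$ (which transfers the symmetry condition from $\nu_0^V$ to $\mu$), and from the finitely-charted nature of the standard curve $X$. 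Uniqueness is clear: any common chart $h$ of $\pi_\mu$ and $\pi_{\mu'}$ yields $\mu = h^*\nu_0 = \mu'$ on its domain.

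The main obstacle I expect is the local existence part of the first statement: assembling a piecewise projective homeomorphism with a prescribed singularity at one point. Once the explicit two-sided gluing by homographies is carried out (and it amounts to solving two linear constraints in the coefficients of a homography, which has room to spare since $\mu_1 > 0$), all remaining verifications reduce to bookkeeping via functoriality of $f^*$ and repeated invocations of Lemma \ref{nu0inv}.
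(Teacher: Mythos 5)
Your proposal is correct, and for the first two assertions it is essentially the paper's argument: compatibility of charts via functoriality of $f\mapsto f^*$ together with Lemma \ref{nu0inv}, and covering by an explicit local construction at each singular point. (Your one-step gluing of two homography germs fixing $x$ -- e.g.\ $t\mapsto \mu_1(x^+)t$ on one side and $t\mapsto t/(1+\mu_2(x^+)t)$ on the other -- is a compressed version of the paper's two-step reduction, which first kills $\mu_1$ by a piecewise affine map and then kills $\mu_2$ by the $\mathcal{C}^1$ map $f_c(t)=t/(1-ct)$; both use the $\tau$-condition in the same way to get the $x^-$ matching for free.) The one genuine divergence is in the uniqueness part of the third assertion. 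The paper localizes, transports by a chart so as to assume $\mu'=\nu_0^X$, and then invokes the rigidity statement of Lemma \ref{uniqpro} (invariance under homographies near the identity forces $\nu=\nu_0$). You instead compare a defining chart $h$ of $\pi_\mu$ with a defining chart $k$ of $\pi_{\mu'}$ near a given point: equality of the structures makes $h\circ k^{-1}$ locally projective, whence $\mu=h^*\nu_0=k^*(h\circ k^{-1})^*\nu_0=k^*\nu_0=\mu'$ by Lemma \ref{nu0inv}. This is a valid and arguably more elementary route that bypasses Lemma \ref{uniqpro} entirely; what you lose is only that Lemma \ref{uniqpro} is the kind of infinitesimal rigidity statement that the paper reuses in spirit elsewhere (Proposition \ref{atmostpro}), whereas your argument is specific to comparing two structures of the form $\pi_\mu$. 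Your existence construction (chart-by-chart pullback $\mu|_W=\psi^*\nu_0^V$) is the same as the paper's $\mu=(i^{-1})^*\nu_0^X$ written in charts; just make sure to record explicitly that the $\pi$-charts are then defining charts for $\pi_\mu$ and cover $X$, so that the two complete atlases coincide.
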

\begin{proof}
To check that this defines a projective structure, the compatibility is clear from functoriality of $f\mapsto f^*$. The only thing to check is that $X$ is covered by charts. This will of course make use of the ``$\tau$-condition" saying that $\nu(\hat{x})=\tau(\nu(x))$. Let $x\in X$; we have to show that $x$ belongs to a chart. We can suppose (by an orientation-preserving isometric change of standard coordinates) that $x=0$.

If $\mu(0^+)=(1,0)$ then $\mu(0^-)=(1,0)$ by the $\tau$-condition, and hence the identity map at a small neighborhood does the job. Otherwise, we can choose a small interval around $0$ on which $\mu$ equals $(1,0)$ except at $0$. Then first composing locally with a piecewise affine local homeomorphism fixing $0$ with a singular point at $0$, boils down to the case when $\mu(0^+)=(1,u)$ for some $u\in\R$. By the $\tau$-condition, $\mu(0^-)=(1,-u)$. Then consider the formula in the proof of Lemma \ref{nu0inv}, assuming in addition that $f$ is of class $\mathcal{C}^1$: for every $t$
\begin{align*}\big(f^*\nu_0^{V'}\big)(t)
= \left(1,\frac{
f''(\hat{t})-f''(t)}{2f'(t)}\right).
\end{align*}
We can indeed find $f$ of class $\mathcal{C}^1$ and piecewise projective and $\mathcal{C}^1$ around 0 with $f'(0)=1$ and $f''(0^+)-f''(0^-)$ arbitrary. Namely, choose $f_c(t)=t$ for $t\le 0$, and $f_c(t)=t/(1-ct)$ for some $c\in\R$. Then $f'(0)=1$, $f''(0^-)=0$ and $f''(0^+)=c/2$. Hence locally composing with such a map with $c$ well-chosen, we can ensure that $\mu(0^+)=(1,0)$; the $\tau$-condition ensures that $\mu(0^-)=(1,0)$, and we have found our projective chart.

The second assertion is straightforward from the definition and functoriality of $f\mapsto f^*$.

As regards the third statement, let us start with uniqueness: consider $\mu,\mu'$ defining the same projective structure. Since this is a local assertion, we can use the first statement to assume that $\mu'=\nu^X_0$. Then the result follows from Lemma \ref{uniqpro}. The existence is immediate: just define $\mu=(i^{-1})^*\nu_0^X$.
\end{proof}

\begin{prop}\label{atmostpro}
Let $X$ be a standard curve.
Let $G$ be a subgroup of $\PC^{0}_{\mathbf{Proj}}(X)$ with no finite orbit on $X$. Then there is at most one compatible $G$-invariant projective structure $\pi$ on $X$. If moreover $G$ acts by $\mathcal{C}^1$-diffeomorphisms, then $\pi$ has to be $\mathcal{C}^1$-compatible.
\end{prop}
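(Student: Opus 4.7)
The plan is to use Proposition~\ref{projnu} to encode any compatible projective structure $\pi$ on $X$ by a unique element $\mu=\mu_\pi\in\mathcal{A}^2(X)$; under this encoding, $\pi$ is $G$-invariant precisely when $f^*\mu=\mu$ for every $f\in G$ (apply the second assertion of Proposition~\ref{projnu} with $\mu=\nu$ to each $f\in G$ viewed as a projective automorphism of $(X,\pi_\mu)$). Both parts of the proposition then reduce to showing that certain finite $G$-invariant subsets of $X^\pm$ are empty, which is exactly the no-finite-orbit hypothesis transported from $X$ to $X^\pm$ via the obvious $2{:}1$ projection.

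For the uniqueness part, let $\mu,\nu\in\mathcal{A}^2(X)$ both encode $G$-invariant compatible projective structures, and set $F=\{x\in X^\pm:\mu(x)\neq\nu(x)\}$. Since both functions coincide with $(1,0)$ outside a finite subset, $F$ is finite. The key observation I will draw from Proposition~\ref{pnumu} is that, with $f\in G$ and $x\in X^\pm$ fixed, the value $(f^*\mu)(x)$ depends on $\mu$ only through $\mu(f(x))$, and does so via a bijection of $\R_{>0}\times\R$ of the form $(a,b)\mapsto(\lambda a,\alpha b+\beta+\gamma a^{-1})$ with $\lambda>0$ and $\alpha\neq 0$ (constants depending only on $f$ and $x$; bijectivity is obvious). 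Therefore $\mu(f(x))=\nu(f(x))$ iff $(f^*\mu)(x)=(f^*\nu)(x)$, and using $f^*\mu=\mu$, $f^*\nu=\nu$ this is in turn equivalent to $\mu(x)=\nu(x)$. So $F$ is stable under the natural action of $G$ on $X^\pm$. Its image in $X$ is a finite $G$-invariant subset, which is empty by hypothesis, so $F=\emptyset$ and $\mu=\nu$.

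For the $\mathcal{C}^1$-assertion, assume furthermore that every $f\in G$ is a $\mathcal{C}^1$-diffeomorphism, so $f'(\hat{x})=f'(x)$. The first-coordinate part of the formula in Proposition~\ref{pnumu} then collapses to $(f^*\mu)_1(x)=\mu_1(f(x))$, and $G$-invariance yields $\mu_1=\mu_1\circ f$. Hence $F_1:=\{x\in X^\pm:\mu_1(x)\neq 1\}$ is a finite $G$-invariant subset, and the same projection argument forces $\mu_1\equiv 1$. It remains to translate this into $\mathcal{C}^1$-compatibility of $\pi_\mu$: the charts $h:V\to W$ defining $\pi_\mu$ satisfy $h^*\nu_0^W=\mu|_V$, and the first-coordinate identity computed in the proof of Lemma~\ref{nu0inv} reads $\mu_1(x)=h'(\hat{x})/h'(x)$, which equals $1$ exactly when $h$ is of class $\mathcal{C}^1$ at $x$. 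So $\mu_1\equiv 1$ says that $\pi_\mu$ admits $\mathcal{C}^1$ charts everywhere, i.e., the identity $(X,\pi_0^X)\to(X,\pi_\mu)$ is $\mathcal{C}^1$. This final equivalence between the numerical condition $\mu_1\equiv 1$ and $\mathcal{C}^1$-compatibility is really the only delicate point in the argument; once that is identified, both conclusions follow from the same two-line pattern---transport a finite invariant subset of $X^\pm$ down to a finite $G$-invariant subset of $X$, which must be empty.
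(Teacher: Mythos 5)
Your proof is correct and follows essentially the same route as the paper: reduce via Proposition~\ref{projnu} to the uniqueness of a $G$-invariant element of $\mathcal{A}^2(X)$, show the disagreement locus in $X^\pm$ is a finite $G$-invariant set (hence empty), and read off $\mathcal{C}^1$-compatibility from $\mu_1\equiv 1$. The only (cosmetic) difference is that you handle both coordinates at once by observing that $(f^*\mu)(x)$ depends on $\mu(f(x))$ through a bijection of $\R_{>0}\times\R$, whereas the paper argues coordinate by coordinate (quotient for $\mu_1$, difference for $\mu_2$); your explicit justification that $\mu_1\equiv 1$ is exactly $\mathcal{C}^1$-compatibility is in fact slightly more careful than the paper's.
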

\begin{proof}
By Proposition \ref{projnu}, this is equivalent to showing that there is at most one $G$-invariant element $\nu\in \mathcal{A}^2(X)$.

For the first coordinate, the $G$-invariance of $\nu$ says that 
$\nu_1(x)=\frac{f'(\hat{x})}{f'(x)}\nu_1(f(x))$. If $\mu$ is also $G$-invariant, it satisfies the same formula, and we deduce, taking the quotient $\eta_1=\nu_1/\mu_1$, that $\eta(x)=\eta(f(x))$ for all $x\in X^\pm$ and all $f\in G$. Hence $\{x\in X^\pm:\eta_1(x)\neq 1\}$ is $G$-invariant. Since $G$ has no finite orbit and $\eta_1=1$ outside a finite subset, we deduce $\eta=1$ and hence $\mu_1=\nu_1$.

Next, the $G$-invariance of $\nu$, read at the second coordinate, says that, for all $x\in X^\pm$ and $f\in G$,
\[\nu_2(x)=f'(x)\nu_2(f(x))-\frac{f''(x)}{2f'(x)}+\frac{f''(\hat{x})f'(x)}{2f'(\hat{x})^2}\nu_1(f(x))^{-1}.\]
Given that $\mu$ satisfies the same invariance, and that $\mu_1=\nu_1$, we deduce, substracting and setting $\eta_2=\nu_2-\mu_2$, that for all $x\in X^\pm$ and $f\in G$,
\[\eta_2(x)=f'(x)\eta_2(f(x)).\]
In particular, the subset $\{x\in X^\pm:\eta_2(x)\neq 0\}$ is $G$-invariant; since it is finite and $G$ has no finite orbit, we deduce $\eta_2=0$ and hence $\nu=\mu$.

For the last statement, observe that if $G$ consists of $\mathcal{C}^1$-diffeomorphisms, then the $G$-invariance of $\nu$ implies that the finite subset $\{x\in X^\pm:\nu_1(x)\neq 1\}$ is $G$-invariant. Hence it is empty. This means that $\pi$ is $\mathcal{C}^1$-compatible.
\end{proof}

\begin{cor}\label{projpipi}
Let $X$ be a standard curve.
Consider two conjugate subgroups $G_1,G_2$ in $\PC^{0}_{\mathbf{Proj}}(X)$,
that have no finite orbit on $X$, and preserve compatible projective structures $\pi_1,\pi_2$. Then $(X,\pi_1)$ and $(X,\pi_2)$ are isomorphic as projectively modeled curves.

If moreover $G_1,G_2$ consist of $\mathcal{C}^1$-diffeomorphisms, then they are conjugate by some $\mathcal{C}^1$-diffeomorphism.
\end{cor}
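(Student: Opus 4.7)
Let $h\in\PC^0_{\mathbf{Proj}}(X)$ conjugate $G_1$ onto $G_2$. The plan is to build, out of $h$ and $\pi_2$, a second compatible $G_1$-invariant projective structure on $X$ and apply the uniqueness statement of Proposition \ref{atmostpro}. Concretely, pull back: define $h^*\pi_2$ to be the unique projective structure on $X$ making $h:(X,h^*\pi_2)\to(X,\pi_2)$ a projective isomorphism.

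First I would check that $h^*\pi_2$ is compatible in the sense of Definition \ref{d_compatible}. Writing $\mathrm{id}_X=h^{-1}\circ h$ and factoring, I have $h:(X,\pi_0)\to(X,\pi_0)$ piecewise projective (because $h\in\PC^0_{\mathbf{Proj}}(X)$), then $\mathrm{id}:(X,\pi_0)\to(X,\pi_2)$ piecewise projective (compatibility of $\pi_2$), then $h^{-1}:(X,\pi_2)\to(X,h^*\pi_2)$ projective (by definition of pullback); the composition is $\mathrm{id}:(X,\pi_0)\to(X,h^*\pi_2)$, so indeed $h^*\pi_2$ is compatible. Next I would check $G_1$-invariance of $h^*\pi_2$: for $g_1=h^{-1}g_2h\in G_1$ with $g_2\in G_2$, the chain $(X,h^*\pi_2)\xrightarrow{h}(X,\pi_2)\xrightarrow{g_2}(X,\pi_2)\xrightarrow{h^{-1}}(X,h^*\pi_2)$ is a composition of projective maps, hence $g_1$ is projective for $h^*\pi_2$.

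Now both $\pi_1$ and $h^*\pi_2$ are compatible and $G_1$-invariant; since $G_1$ has no finite orbit on $X$, Proposition \ref{atmostpro} forces $\pi_1=h^*\pi_2$. This exactly says that $h:(X,\pi_1)\to(X,\pi_2)$ is a projective isomorphism, proving the first assertion.

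For the $\mathcal{C}^1$ refinement, assume now that $G_1$ (hence also $G_2$) consists of $\mathcal{C}^1$-diffeomorphisms. By the second clause of Proposition \ref{atmostpro}, both $\pi_1$ and $\pi_2$ are $\mathcal{C}^1$-compatible, i.e.\ the identity maps $(X,\pi_0)\to(X,\pi_i)$ are $\mathcal{C}^1$-diffeomorphisms. Since a projective isomorphism of projectively modeled curves is automatically $\mathcal{C}^1$ with respect to the underlying differentiable structures, $h$ read as $(X,\pi_0)\xrightarrow{\mathrm{id}}(X,\pi_1)\xrightarrow{h}(X,\pi_2)\xrightarrow{\mathrm{id}}(X,\pi_0)$ is a composition of three $\mathcal{C}^1$-diffeomorphisms, hence $h$ itself is already a $\mathcal{C}^1$-conjugator. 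The only mildly delicate step is the compatibility verification for $h^*\pi_2$; everything else is formal manipulation of pullbacks together with the uniqueness input from Proposition \ref{atmostpro}.
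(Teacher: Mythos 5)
Your argument is correct and is essentially the paper's own proof: pull back $\pi_2$ by the conjugator to get a second compatible $G_1$-invariant projective structure, invoke the uniqueness in Proposition \ref{atmostpro} to conclude $\pi_1=h^*\pi_2$, and use the $\mathcal{C}^1$-compatibility clause of that proposition to upgrade $h$ to a $\mathcal{C}^1$-diffeomorphism. You merely spell out the compatibility and invariance verifications that the paper leaves implicit.
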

\begin{proof}
When $G_1=G_2$, we deduce from Proposition \ref{atmostpro} that $\pi_1$ and $\pi_2$ are equal. When $G_2=fG_1f^{-1}$ with $f\in\PC^{0}_{\mathbf{Proj}}(X)$, this implies that $\pi_1=f^*\pi_2$ is the pull-out by $f$ of $\pi_2$, and hence $f$ induces an isomorphism from $(X,\pi_1)$ to $(X,\pi_2)$. 

In the $\mathcal{C}^1$-case, Proposition \ref{atmostpro} implies that $\pi_1$ and $\pi_2$ are $\mathcal{C}^1$-compatible. Then the condition $\pi_1=f^*\pi_2$ forces $f$ to be of class $\mathcal{C}^1$.
\end{proof}

\subsection{Classification of exotic circles}
It is easy and standard that every closed subgroup of the group of self-homeomorphisms of the circle $X=\R/\Z$, if topologically isomorphic to $\R/\Z$, is conjugate by an orientation-preserving self-homeo\-morphism to the group $\R/\Z$ of translations. Roughly speaking, given a subgroup $W$ of $\mathrm{Homeo}(\R/\Z)$ (typically, the automorphism group of some enriching structure), an ``exotic circle" is a subgroup conjugate to the group $\R/\Z$ of translations in $\mathrm{Homeo}(\R/\Z)$, but not in $W$. In the context of piecewise affine self-homeomorphisms, exotic circles were defined and classified by Minakawa \cite{Mi}. 

Denote by $\PC^{1}_{\mathbf{Proj}}(X)$ the group of $\mathcal{C}^1$-diffeomorphisms in $\PC^{0}_{\mathbf{Proj}}(X)$. Also use the notation of the appendix. Roughly speaking, below, $\Theta_1$ is the standard circle, $\Theta_t$ for $t>1$ are the non-standard affine circles, $\Omega_n$ is the connected $n$-fold covering of the projective line, and are interpolated by the ``metaelliptic circles" $\Omega_r$ (which come from lifts of elliptic elements in the universal covering of $\SL_2(\R)$).

\begin{thm}\label{tsg}
Let $X$ be the standard circle $\R/\Z$ with its subgroup $K$ of isometries and its subgroup $K^+\simeq\mathrm{SO}(2)$ of orientation-preserving isometries. Fix $H\in\{K,K^+\}$. Consider a faithful continuous action of $H$ on $X$, whose image $G$ is contained in $\PC^{0}_{\mathbf{Proj}}(X)$
Then the following hold.
\begin{enumerate}
\item\label{tsg1} $G$ preserves a unique compatible (Definition \ref{d_compatible}) projective structure $\pi$ on~$X$.
\item\label{tsg2} $\pi$ is $\mathcal{C}^1$-compatible if $G$ consists of $\mathcal{C}^1$-diffeomorphisms.
\item\label{tsg3} The projectively modeled curve $(X,\pi)$ is isomorphic to $\Omega_r$ for some $r>0$, or $\Theta_t$ for some $t\ge 1$. 
\item\label{tsg4} The conjugacy class of $G$ among subgroups of $\PC^{0}_{\mathbf{Proj}}(X)$ is characterized by the isomorphy type of the projectively modeled curve $(X,\pi)$, thus by either the case $\Theta_t$ for $t\ge 1$ (``affine case") or $\Omega_r$ for $r>0$ (``metaelliptic case"). We thus say that $G$ is of type $\Theta_t$, or of type $\Omega_r$. This also characterizes $G$ modulo conjugation by $\PC^{0,+}_{\mathbf{Proj}}(X)$.
\item\label{tsg5} if $G\subset\PC^{1}_{\mathbf{Proj}}(X)$, then this also characterizes the conjugacy class modulo the conjugation action of $\PC^{1}_{\mathbf{Proj}}(X)$, or also $\PC^{1,+}_{\mathbf{Proj}}(X)$.
\item\label{tsg6} $G$ preserves an affine structure on $X$ if and only if it is of type $\Theta_1$, or if $H=K^+$ and $G$ is of type $\Theta_t$ for some $t>1$.
\item\label{tsg7} Consider the corresponding conjugacy classification for the corresponding action of $H$, the classification is the same when considered modulo conjugation by $\PC^{0}_{\mathbf{Proj}}(X)$, or $\PC^{1}_{\mathbf{Proj}}(X)$ in the $\mathcal{C}^1$-case.
\end{enumerate}
\end{thm}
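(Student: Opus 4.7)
The plan is to combine the transfixing machinery for commensurating actions with the appendix's classification of one-dimensional projective manifolds. Since $H$ is a one-dimensional Lie group of the form $H^\circ \rtimes F$ with $F$ trivial or of order $2$, Lemma \ref{fwrelie} produces a finitely generated dense subgroup $\Gamma \subset H$ such that $(H,\Gamma)$ has relative Property FW. Through the given embedding into $\PC^{0}_{\mathbf{Proj}}(X)$, the subgroup $\Gamma$ acquires a topological $\mathbf{Proj}$-preserving partial action, and relative Property FW forces $X$ to be $\Gamma$-transfixed for $\alpha_{\mathbf{Proj}}$. Since $K^+$ acts transitively on $X$, the $\Gamma$-action has no finite orbit, so the continuous-action clause of Corollary \ref{corpro1} furnishes a finitely-charted projectively modeled curve $X_1$, a continuous projective $\Gamma$-action on $X_1$, and a $\Gamma$-biequivariant piecewise projective homeomorphism $h : X \to X_1$. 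Density of $\Gamma$ and continuity promote this to $H$-biequivariant data, with the projective $H$-action being of class $\mathcal{C}^1$ whenever $G$ is.

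Pulling back the projective structure of $X_1$ along $h$ yields a compatible $G$-invariant projective structure $\pi$ on $X$, and Proposition \ref{atmostpro} delivers both its uniqueness and its $\mathcal{C}^1$-compatibility when $G \subset \PC^{1}_{\mathbf{Proj}}(X)$, settling (\ref{tsg1}) and (\ref{tsg2}). For (\ref{tsg3}), the projectively modeled curve $(X,\pi)$ is connected, homeomorphic to a circle, and has automorphism group containing a continuous copy of $G^\circ \simeq \R/\Z$; the classification in Appendix \ref{s_projcur} then asserts that $(X,\pi)$ is isomorphic to exactly one of the curves $\Theta_t$ ($t \geq 1$) or $\Xi_r$ ($r > 0$), which pins down the type.

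For (\ref{tsg4}) and (\ref{tsg5}), one direction is immediate from Corollary \ref{projpipi}: conjugate $G$'s yield isomorphic projective structures, hence the same type. Conversely, I will take any isomorphism $\phi: (X,\pi_1) \to (X,\pi_2)$ of projective curves (automatically piecewise projective as a self-homeomorphism of $X$, and of class $\mathcal{C}^1$ when both structures are $\mathcal{C}^1$-compatible); then $\phi G_1 \phi^{-1}$ lies in $\Aut(X,\pi_2)$ with identity component a copy of $\R/\Z$ inside $\Aut(X,\pi_2)^\circ$. Since the latter is $\R/\Z$, $\R \rtimes \R$, or a finite cover of $\PSL_2(\R)$ by the appendix, its circle subgroups form a single conjugacy class, and a further projective automorphism of $(X,\pi_2)$ aligns $\phi G_1 \phi^{-1}$ with $G_2$, giving the desired conjugation in $\PC^{0}_{\mathbf{Proj}}(X)$ (respectively in $\PC^{1}_{\mathbf{Proj}}(X)$). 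For the orientation-preserving variants and for (\ref{tsg7}), any required orientation reversal or reparametrization of the $H$-action will be absorbed into an automorphism of $H$ or an inner automorphism of $G_2$.

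Finally (\ref{tsg6}) is read off the known structure of affine automorphism groups: $\Theta_t$ is affinely modeled by definition, and its affine automorphism group equals $O(2)$ when $t = 1$ but reduces to the rotation subgroup $\R/\Z$ when $t > 1$ (the homothety $x \mapsto cx$ on $\R_{>0}/\langle t \rangle$ is the only affine possibility and is orientation-preserving), while $\Xi_r$ carries no affine structure compatible with its projective one. The main anticipated obstacle is the appendix's classification of projectively modeled circles and the uniqueness, up to conjugation, of their maximal compact subgroups of automorphisms, upon which (\ref{tsg3})--(\ref{tsg5}) rest; a secondary delicate point will be orientation bookkeeping in (\ref{tsg4})--(\ref{tsg5}) when $H = K$ and the type is $\Theta_t$ with $t > 1$, where no affine reflection exists.
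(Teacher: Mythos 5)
Your proposal tracks the paper's own proof almost step for step: Lemma \ref{fwrelie} to get a dense relatively-FW subgroup $\Gamma$, Corollary \ref{corpro1} to change the model, Proposition \ref{atmostpro} for uniqueness and $\mathcal{C}^1$-compatibility, the appendix classification for (\ref{tsg3}), Corollary \ref{projpipi} for one direction of (\ref{tsg4})--(\ref{tsg5}) and conjugacy of maximal compact subgroups of automorphisms (Proposition \ref{conjuo2}) for the other, and the affine classification plus the absence of orientation-reversing affine automorphisms of $\Theta_t$ ($t>1$) for (\ref{tsg6}).

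There is, however, one genuine gap: the sentence ``Density of $\Gamma$ and continuity promote this to $H$-biequivariant data.'' At that point you only know that the dense subgroup $\Gamma$ preserves the pulled-back structure $\pi$; to conclude that all of $G$ does, you need the stabilizer of $\pi$ in $\mathrm{Homeo}(X)$ --- equivalently the automorphism group of the projectively modeled curve $(X,\pi)$ --- to be \emph{closed} in $\mathrm{Homeo}(X)$. This is not a formal consequence of density and continuity: a locally uniform limit of projective automorphisms has no a priori reason to be projective (compare: limits of $\mathcal{C}^1$-diffeomorphisms need not be $\mathcal{C}^1$). The paper fills exactly this hole with Proposition \ref{autisclosed}, whose proof is a case-by-case verification over the classification of projectively modeled curves (and the author remarks that no classification-free argument is known). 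Your argument is complete once you insert this closedness statement; a secondary, fixable imprecision is that for $H=K$ you align only the circle subgroups $G_i^\circ$, whereas one should invoke the $\mathrm{O}(2)$ part of Proposition \ref{conjuo2} to conjugate the full groups.
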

\begin{proof}
Write $G^+$ as the image of $K^+$ in $G$.
First observe that $G$ has no finite orbit. Otherwise $G^+$ would fix a point and it is easy to check and well-known that $K^+$ has no nontrivial continuous action on an interval.

By Lemma \ref{fwrelie}, there is a dense subgroup $\Gamma$ of $G$ such that $(G,\Gamma)$ has relative Property FW (where $G$ is considered as discrete group). By Corollary \ref{corpro1}, there exists a finitely charted projectively modeled curve $X'$, and a piecewise projective homeomorphism $f:X\to X'$ such that the conjugate $\Gamma$-action preserves the projective structure. Pulling back to $X$, we obtain a compatible $\Gamma$-invariant projective structure $\pi$. By Proposition \ref{autisclosed}, the automorphism group of $(X,\pi)$ is closed in $\mathrm{Homeo}(X)$, and hence $G$ preserves $\pi$.

When $G_i$ consists of $\mathcal{C}^1$-diffeomorphisms, Proposition \ref{atmostpro} ensures that $\pi$ is $\mathcal{C}^1$-compatible.

So (\ref{tsg1}) and (\ref{tsg2}) are proved. (\ref{tsg3}) follows from the classification of projective structures on the circle and their maximal compact subgroups of automorphisms, established in the appendix. 

For (\ref{tsg4}), one direction is provided by Corollary \ref{projpipi}: the conjugacy class of $G$ determines the isomorphism type of $(X,\pi)$. In the other direction, suppose that the isomorphism type of $(X,\pi)$ is given. Proposition \ref{conjuo2}, based on classification, provides the converse: once $G$ preserves $\pi$, it is uniquely determined modulo conjugation by the orientation-preserving automorphism group of $(X,\pi)$.

For (\ref{tsg5}), the only nontrivial improvement with respect to (\ref{tsg4}) lies in the $\mathcal{C}^1$ assertion of Corollary \ref{projpipi}.

(\ref{tsg6}) if $G$ preserves an affine structure $\xi$, it preserves the corresponding projective structure $\pi_\xi$, which is then equal to $\pi$. By the affine classification (see the appendix), $(X,\xi)$ is isomorphic as affinely modeled curve to $\Theta_t$ for some $t\ge 1$. In the case of $H=K$, the affine action does not preserve the orientation, which excludes $\Theta_t$ if $t>1$.

(\ref{tsg7}) As a topological group, the outer automorphism group of $K$ is trivial, and hence the result is immediate. For $K^+$, the classification implies that every action of $K^+$ on a projectively modeled curve extends to $K$, and then we obtain the equivalences of both conjugacy notions. (Of course this does not extend to conjugacy modulo orientation-preserving elements. This comes from the plain topological setting: the only action of $K^+$ on the circle is not orientation-reversing isomorphic to itself; that is, its centralizer in the self-homeomorphism group of the circle consists of orientation-preserving self-homeomorphisms.)
\end{proof}

\begin{thm}\label{misl2}
Fix $n\in\N_{\ge 1}$. Let $X$ be the connected $n$-covering of the projective line $\mathbf{P}^1_\R$ (with respect to the basepoint $\infty$) and let $H$ be either the connected $n$-fold covering of $\PSL_2^{(n)}(\R)$, or the corresponding overgroup $\PGL_2^{(n)}(\R)$ of index two. Let $G$ be the image of $H$ in a faithful continuous action $\alpha$ of $H$ on $X$, valued in $\PC^{0}_{\mathbf{Proj}}(X)$. 
Then: 
\begin{enumerate}
\item $G$ is conjugate to $H$ by some element of $\PC^{0,+}_{\mathbf{Proj}}(X)$, which can be chosen in $\PC^{1,+}_{\mathbf{Proj}}(X)$ if $G\subset\PC^{1}_{\mathbf{Proj}}(X)$;
\item the action $\alpha$ itself of $H$ is conjugate to the inclusion action by some element of $\PC^{0}_{\mathbf{Proj}}(X)$, which can be chosen in $\PC^{1,+}_{\mathbf{Proj}}(X)$ if $G=\alpha(H)\subset\PC^{1}_{\mathbf{Proj}}(X)$.
\end{enumerate}
\end{thm}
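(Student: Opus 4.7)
The plan is to reduce to Theorem \ref{tsg} applied to the maximal compact subgroup $K^+\simeq \R/\Z$ of $H$, and then propagate the $K^+$-invariance of the resulting projective structure to all of $H$ via the structure of closed subgroups of $H$ and the classification of projectively modeled curves. Concretely, let $K^+\subset H$ be the maximal compact subgroup (the image in $H$ of $\mathrm{SO}(2)\subset \PSL_2(\R)$). The restriction $\alpha|_{K^+}$ is a faithful continuous action of $\R/\Z$ on the topological circle $X$ valued in $\PC^0_{\mathbf{Proj}}(X)$; applying Theorem \ref{tsg} to it produces a unique compatible projective structure $\pi$ on $X$ preserved by $\alpha(K^+)$, with $(X,\pi)$ of type $\Theta_t$ for some $t\ge 1$ or $\Xi_r$ for some $r>0$, and $\mathcal{C}^1$-compatible whenever $G\subset\PC^1_{\mathbf{Proj}}(X)$.

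The next step, and the main difficulty, is to show that $\alpha(H)$ itself preserves $\pi$. Set $S=\alpha^{-1}(\Aut(X,\pi))$: this is closed in $H$ by Proposition \ref{autisclosed} combined with continuity of $\alpha$, and contains $K^+$. The closed subgroups of $H$ containing $K^+$ are $K^+$, its normalizer $N_H(K^+)$ (relevant only in the $\PGL$ case), and $H$ itself; the first two are compact while $H$ is not. To rule out the compact alternatives, I would argue by contradiction: if $S$ were compact, then for any $h\in H\setminus S$, the push-forward $\alpha(h)_*\pi$ would be a compatible projective structure distinct from $\pi$, preserved by the conjugate circle $\alpha(hK^+h^{-1})$. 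This would produce a non-trivial $H$-equivariant map from the non-compact homogeneous space $H/S$ into the set of compatible projective structures of the same type as $\pi$ on $X$; tracking the associated embedded circle subgroups of $\PC^0_{\mathbf{Proj}}(X)$ and invoking Proposition \ref{atmostpro} together with the classification of automorphism groups of projectively modeled curves yields a contradiction with the finite-dimensionality of the relevant target. Hence $S=H$, i.e., $\alpha(H)\subset\Aut(X,\pi)$.

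Given $\alpha(H)\subset\Aut(X,\pi)$, the type of $(X,\pi)$ is forced to be $\Xi_n$: the $\Theta_t$'s are excluded because $\Aut(\Theta_t)^\circ$ is one-dimensional while $\alpha(H)$ has dimension three and is faithful; among the $\Xi_r$, only $r=n$ yields both the correct topological covering degree of $X$ over $\mathbf{P}^1_\R$ and an automorphism component admitting a faithful copy of $\PSL_2^{(n)}(\R)$. Thus $\alpha$ becomes a continuous injective homomorphism into $\Aut(\Xi_n)$, landing in $\Aut(\Xi_n)^\circ$ for $H=\PSL_2^{(n)}(\R)$ and onto $\Aut(\Xi_n)$ for $H=\PGL_2^{(n)}(\R)$. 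An isomorphism $(X,\pi)\simeq\Xi_n$ realized by a piecewise projective self-homeomorphism $f$ of $X$ (which is automatically of class $\mathcal{C}^1$ in the differentiable case, by the $\mathcal{C}^1$-compatibility of $\pi$) then conjugates $\alpha$ to the inclusion action, proving (2); assertion (1) follows because any two faithful continuous self-homomorphisms of $H$ with the same image differ by an automorphism of $H$, and any outer involution can be absorbed into $f$ while keeping $f$ orientation-preserving in the $\PSL$-case (where $\alpha(H)$ is automatically orientation-preserving). The main obstacle in the plan is the rigidity step ruling out compact $S$: unipotent one-parameter subgroups of $H$ have fixed points, so Theorem \ref{tsg} does not apply to them directly, and one genuinely needs the rigidity of circle-invariant compatible projective structures together with the classification of projectively modeled curves.
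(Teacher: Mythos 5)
Your opening and closing steps are fine, but the middle step --- propagating invariance of $\pi$ from $\alpha(K^+)$ to all of $\alpha(H)$ --- is where the real work lies, and your argument there is not a proof. The reduction to ``$S=\alpha^{-1}(\Aut(X,\pi))$ is a closed subgroup of $H$ containing $K^+$, hence equals $K^+$, $N_H(K^+)$ or $H$'' is correct, but the contradiction you sketch to exclude the compact alternatives does not go through as stated: the ``relevant target'' is the set of compatible projective structures on $X$, which by Proposition \ref{projnu} is parametrized by $\mathcal{A}^2(X)$ --- functions on $X^\pm$ equal to $(1,0)$ off a finite subset --- and this space is not finite-dimensional, so there is no dimension obstruction to an injective $H$-equivariant map $H/S\to\mathcal{A}^2(X)$. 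Nor is it clear what continuity or properness of the orbit map $h\mapsto\alpha(h)_*\pi$ you could invoke; Proposition \ref{atmostpro} only gives uniqueness of the invariant structure for a \emph{fixed} circle acting without finite orbits, which you have already used. So the exclusion of ``$S$ compact'' is a genuine gap, and I do not see how to close it with the tools you cite.

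The paper's proof takes a different route precisely to avoid this issue: instead of the maximal compact circle, it produces a \emph{dense} subgroup $\Gamma\le H$ with Property FW, namely the preimage in $H$ of $\PSL_2(\Z[\sqrt{2}])$ (or $\PGL_2(\Z[\sqrt{2}])$), which is dense in $\PSL_2(\R)$ and has Property FW as an irreducible arithmetic lattice in a product. The commensurating-action machinery (Corollary \ref{corpro1}, as in the proof of Theorem \ref{tsg}) then yields a compatible $\Gamma$-invariant projective structure $\pi$, and since $\Aut(X,\pi)$ is closed in $\mathrm{Homeo}(X)$ by Proposition \ref{autisclosed} and $\alpha(\Gamma)$ is dense in $G$, all of $G$ preserves $\pi$ with no propagation from a proper subgroup required; the classification in the appendix then forces $(X,\pi)\simeq\Xi_n$ and the conclusion follows as in your endgame. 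If you want to keep your circle-only approach, you would need an actual argument that a compatible projective structure invariant under one embedded circle of $\alpha(H)$ is automatically $\alpha(H)$-invariant, and I believe that essentially amounts to redoing the Property FW argument on a larger subgroup.
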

\begin{proof}
As in the proof of Theorem \ref{tsg}, we first need to have some Property FW phenomenon: actually we find a dense subgroup with Property FW:  If $H=\PSL_2$ or $\PGL_2$, we consider $\PSL_2(\Z[\sqrt{2}])$ or $\PGL_2(\Z[\sqrt{2}])$, and for their finite coverings we consider their inverse images.

Clearly the $G$-action has no finite orbit, and as in the proof of Theorem \ref{tsg}, using Property FW we find a compatible $\Gamma$-invariant projective structure $\pi$ ($\mathcal{C}^1$-compatible if $G$ acts by $\mathcal{C}^1$-diffeomorphisms), and again using Proposition \ref{autisclosed}, the automorphism group of $(X,\pi)$ is closed, and hence $G$ preserves $\pi$.

By the classification in the appendix, the only possibility\footnote{Hint (added after publication): since the automorphism group is not solvable, this implies that $(X,\pi)$ is isomorphic to $\Omega_m$ for some $m$. But then the existence of a continuous injective homomorphism $\PSL_2^{(n)}(\R)\to \PGL_2^{(m)}(\R)$ forces $n=m$.} is that $(X,\pi)$ is isomorphic to $\Omega_n$, the $n$-fold connected covering of the projective line, that is, $X$ itself (chosen by anticipation!). This precisely means that $G$ is conjugate, by some element of $\PC^{0,+}_{\mathbf{Proj}}(X)$, to a subgroup of $H$, and hence to $H$ itself (since clearly any injective continuous homomorphism $G\to H$ is an isomorphism). Also in the $\mathcal{C}^1$-case, since $\pi$ is $\mathcal{C}^1$-compatible, the conjugating element has to be $\mathcal{C}^1$ as well.

By post-conjugation by an orientation-reversing element of $\PGL_2^{(n)}(\R)$ if necessary, we can arrange also the conjugation to be orientation-preserving. 
\end{proof}

The following answers a question in an early version of \cite{LMT}.

\begin{cor}\label{lmtminakawa}
Fix $n\in\N_{>0}$. Let $H$ be either equal to $\PSL_2^{(n)}(\R)$ or $\PGL_2^{(n)}(\R)$. Let $G$ be the image of a faithful continuous action of $H$ on $\R/\Z$. Suppose that $G\subset\PC^0_{\mathbf{Proj}}(\R/\Z)$. 

Then $G$ is closed in $\mathrm{Homeo}(\R/\Z)$, and is uniquely defined up to conjugation in $\PC^0_{\mathbf{Proj}}(\R/\Z)$. If moreover $G\subset \PC^1_{\mathbf{Proj}}(\R/\Z)$, then it is uniquely defined up to conjugation in $\PC^1_{\mathbf{Proj}}(\R/\Z)$.
\end{cor}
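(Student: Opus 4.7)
The plan is to mirror the proof of Theorem \ref{misl2}, working directly with $\R/\Z$ equipped with its standard projective structure $\pi_0$ in place of $\Xi_n$, and then to identify $(\R/\Z,\pi)$ with $\Xi_n$ via the classification of projectively modeled curves in the appendix. The topological coincidence that $\Xi_n$ is (a circle, hence) homeomorphic to $\R/\Z$ is what allows the argument to transfer unchanged.

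First I will construct a $G$-invariant compatible projective structure $\pi$ on $\R/\Z$. Arguing as in the proof of Theorem \ref{misl2}, the $H$-action on $\R/\Z$ has no finite orbit, since no nontrivial quotient of $H$ acts continuously on an open interval. Choose a finitely generated dense subgroup $\Gamma\subset H$ such that $(H,\Gamma)$ has relative Property FW as an abstract pair; for $H$ a cover of $\PSL_2(\R)$ or $\PGL_2(\R)$, one lifts $\PSL_2(\Z[\sqrt 2])$ or $\PGL_2(\Z[\sqrt 2])$ to the $n$-fold cover, exactly as in Theorem \ref{misl2}. Applying Corollary \ref{corpro1} to the piecewise projective $\Gamma$-action on $(\R/\Z,\pi_0)$ yields a piecewise projective $\Gamma$-equivariant map onto a finitely-charted projectively modeled curve, whose pullback endows $\R/\Z$ with a compatible $\Gamma$-invariant projective structure $\pi$ in the sense of Definition \ref{d_compatible}. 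If $G\subset\PC^1_{\mathbf{Proj}}(\R/\Z)$, then Proposition \ref{atmostpro} upgrades compatibility to $\mathcal{C}^1$-compatibility.

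Next I will promote invariance from $\Gamma$ to $G$ and identify $(\R/\Z,\pi)$: by Proposition \ref{autisclosed} the group $\mathrm{Aut}(\R/\Z,\pi)$ is closed in $\mathrm{Homeo}(\R/\Z)$, and density of $\Gamma$ in $G$ gives $G\subseteq\mathrm{Aut}(\R/\Z,\pi)$; in particular $G$ is closed in $\mathrm{Homeo}(\R/\Z)$. By the classification in the appendix, the only connected projectively modeled curve whose automorphism group admits a continuous injective morphism from $H$ is $\Xi_n$; hence there is a piecewise projective isomorphism $(\R/\Z,\pi)\to\Xi_n$, and conjugation by this isomorphism sends $G$ onto the standard image of $H$ in $\mathrm{Aut}(\Xi_n)$, up to an inner automorphism of $\mathrm{Aut}(\Xi_n)$. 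In the $\mathcal{C}^1$ setting the isomorphism is a $\mathcal{C}^1$-diffeomorphism by the $\mathcal{C}^1$-compatibility of $\pi$.

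Uniqueness up to conjugation in $\PC^0_{\mathbf{Proj}}(\R/\Z)$ (respectively $\PC^1_{\mathbf{Proj}}(\R/\Z)$) is then immediate: two different $G$'s produce two isomorphic realizations of $\Xi_n$ as $(\R/\Z,\pi)$, and composing the two piecewise projective (resp.\ $\mathcal{C}^1$) identifications gives a piecewise projective (resp.\ $\mathcal{C}^1$) self-homeomorphism of $\R/\Z$ conjugating one $G$ to the other. I expect the main obstacle to be the first step: verifying that relative Property FW is genuinely available inside the covers $\PSL_2^{(n)}(\R)$ and $\PGL_2^{(n)}(\R)$, which requires carefully lifting the $S$-arithmetic Property FW argument through the $n$-fold covering so that bounded generation survives. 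Once this is in hand, the rest is a transparent transport of the argument of Theorem \ref{misl2} onto the topological circle $\R/\Z$.
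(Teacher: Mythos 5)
Your argument is correct, but it is organized differently from the paper's. The paper proves Corollary \ref{lmtminakawa} in one line: fix once and for all a piecewise projective $\mathcal{C}^1$-diffeomorphism $f_n:\R/\Z\to\Xi_n$ and conjugate by it, so that the statement becomes exactly Theorem \ref{misl2} (conjugation by $f_n$ preserves membership in $\PC^0_{\mathbf{Proj}}$ and in $\PC^1_{\mathbf{Proj}}$, and transports closedness). You instead inline the whole proof of Theorem \ref{misl2} on the topological circle $\R/\Z$: relative Property FW via lifts of $\PSL_2(\Z[\sqrt2])$, Corollary \ref{corpro1} to produce a compatible $\Gamma$-invariant projective structure, Proposition \ref{autisclosed} plus density to upgrade to $G$-invariance, and the appendix classification to identify $(\R/\Z,\pi)$ with $\Xi_n$. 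Both routes work; the paper's is shorter because it factors the hard content into Theorem \ref{misl2}, while yours avoids having to exhibit the transport map $f_n$ at the cost of repeating that content. Two small points. First, your ``in particular $G$ is closed'' is premature where you state it: containment in the closed group $\mathrm{Aut}(\R/\Z,\pi)$ does not by itself make $G$ closed; closedness only follows once you have identified $G$ with the full group $\mathrm{Aut}^+(\Xi_n)$ (or $\mathrm{Aut}(\Xi_n)$), which is what your next sentence provides. Second, the worry you flag about Property FW surviving the passage to the $n$-fold cover is not a real obstacle and the paper does not dwell on it either: the lifted group is a central extension of a Property FW group by a finite cyclic group, and Property FW is stable under such extensions, so no re-examination of bounded generation is needed.
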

\begin{proof}
Choose a piecewise projective $\mathcal{C}^1$-diffeomorphism $f_n$ from $\R/\Z$ to the $n$-fold covering of $\mathbf{P}^1_\R$ to inherit the result from Theorem \ref{misl2}.
\end{proof}

Let us now provide the affine version of Theorem \ref{tsg}. In Minakawa's original formulation \cite{Mi}, it consists of the classification of subgroups of $\PC^{0,+}_{\mathbf{Aff}}(\R/\Z)$ modulo conjugation, among those conjugate to the group $\R/\Z$ of translations within $\mathrm{Homeo}(\R/\Z)$.

In analogy with Definition \ref{d_compatible}, an affine structure $\xi$ on a standard curve $X$ is compatible if, $\xi_0$ being the standard structure, the identity map from $X$ to $(X,\xi_0)$ to $(X,\xi)$ is piecewise affine.

\begin{thm}\label{tmink}
Let $X$ be the standard circle $\R/\Z$ with its subgroup $K$ of isometries and its subgroup $K^+\simeq\mathrm{SO}(2)$ of orientation-preserving isometries. Consider a faithful continuous action of $K^+$ on $X$, whose image $G$ is contained in $\PC^{0}_{\mathbf{Aff}}(X)$.
Then
\begin{enumerate}
\item\label{tmink1} $G$ preserves a unique compatible affine structure $\xi$ on $X$;
\item\label{tmink3} the affinely modeled curve $(X,\xi)$ is isomorphic to $\Theta_t$ for some $t\ge 1$. We then say that $G$ is of type $\Theta_t$;
\item\label{tmink4} the conjugacy class of $G$ among subgroups of $\PC^{0}_{\mathbf{Aff}}(X)$ is characterized by the isomorphy type of the affinely modeled curve $(X,\xi)$, thus by the number $t\ge 1$;
\item define $t'=1$ if $t=1$ and $t'$ as equal to $t$ or $t^{-1}$ according to whether the universal covering of $(X,\xi)$, endowed with its orientation inherited from $\R/\Z$, is isomorphic as an affinely modeled curve to $\R_{>0}$ or $\R_{<0}$. Then the conjugacy class of $G$ modulo conjugacy by $\PC^{0,+}_{\mathbf{Aff}}(X)$ is characterized by the number $t'\in\R_{>0}$.
\item\label{tmink7} Consider the corresponding conjugacy classification for the corresponding action of $K^+$ modulo conjugation by $\PC^{0,+}_{\mathbf{Aff}}(X)$: then $t'$ is a full invariant.
\item\label{tmink8} Given the $K^+$-action, define $t''$ as equal to $t'$ or $-t'$ according to whether the orientation determined by the action of small positive element and the given orientation on $X$ coincide. Then $t''\in\R^*$ is a full invariant for such faithful $K^+$-actions, modulo conjugation by $\PC^{0,+}_{\mathbf{Aff}}(X)$.
\end{enumerate}
In contrast, if $G$ is the image of $K$ for such an action, then $G$ is conjugate to $K$ by some element of $\PC^{0,+}_{\mathbf{Aff}}(X)$, and similarly the action of $K$ is conjugate to its canonical isometric action by some element of $\PC^{0}_{\mathbf{Aff}}(X)$.
\end{thm}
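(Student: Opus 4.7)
My plan is to transfer the proof of Theorem~\ref{tsg} from the projective setting to the affine one, replacing Corollary~\ref{corpro1} by Corollary~\ref{cormain3} applied with $S=\mathbf{Aff}$, and replacing the projective classification by the much simpler affine one: the connected, finitely-charted, affinely modeled curves homeomorphic to a circle form precisely the family $\{\Theta_t:t\ge 1\}$, with known (orientation-preserving) affine automorphism group $\R/\Z$.

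First I would note that $G$ has no finite orbit, because $K^+$, being a connected compact group, admits no faithful continuous action on an interval. Then, by Lemma~\ref{fwrelie}, I would pick a finitely generated dense subgroup $\Gamma$ of $K^+$ such that $(K^+,\Gamma)$ has relative Property FW as abstract groups. Relative FW forces $\Gamma$ to transfix $X$ for the partial action $\alpha_{\mathbf{Aff}}$, and since $\Gamma$ has no finite orbit, Corollary~\ref{cormain3} (with $S=\mathbf{Aff}$ and $T$ the full pseudogroup of local homeomorphisms) produces a piecewise affine homeomorphism from $X$ onto a finitely-charted affinely modeled curve biequivariantly conjugating the $\Gamma$-action to an affine action; pulling back the target affine structure yields a $\Gamma$-invariant compatible affine structure $\xi$ on $X$. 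Two such $\xi$'s would agree outside a finite $\Gamma$-invariant subset of $X^\pm$ (the affine analogue of Proposition~\ref{atmostpro}, using that the pointwise defect between two compatible affine structures at a given point is pinned down by the one-sided jets), which must therefore be empty; and closedness of $\Aut_{\mathbf{Aff}}(X,\xi)$ in $\mathrm{Homeo}(X)$, visible on the explicit model $\Theta_t$, promotes the $\Gamma$-invariance of $\xi$ to $G$-invariance. This gives (\ref{tmink1}), and the affine classification above then gives (\ref{tmink3}).

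Item (\ref{tmink4}) will be formal from (\ref{tmink1})--(\ref{tmink3}): any piecewise affine conjugation between two such subgroups $G_1,G_2$ pulls $\xi_2$ back to a $G_1$-invariant compatible affine structure, hence to $\xi_1$ by uniqueness, so it is an affine isomorphism $(X,\xi_1)\to(X,\xi_2)$, whence the $\PC^0_{\mathbf{Aff}}(X)$-conjugacy class is simply the isomorphism class of $(X,\xi)$, parametrized by $t\ge 1$. The refinements to orientation-preserving conjugation and then to $K^+$-equivariant conjugation only track extra data: the orientation of the universal covering $\R_{>0}/\langle t\rangle$ inherited from $\R/\Z$ distinguishes $t$ from $t^{-1}$ and yields $t'\in\R_{>0}$, while further recording which half of $K^+$ rotates positively provides the sign in $t''\in\R^*$, by the same pattern as at the end of the proof of Theorem~\ref{tsg}; this gives (\ref{tmink7}), (\ref{tmink8}) and the orientation refinement of (\ref{tmink4}). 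Finally, for the contrast case $H=K$, the group $G$ would contain an orientation-reversing affine automorphism of $(X,\xi)\simeq\Theta_t$; but for $t>1$ any such map would descend from an orientation-reversing affine automorphism of $\R_{>0}$ normalizing $\langle t\rangle$ and hence supply an isomorphism $\Theta_t\simeq\Theta_{t^{-1}}$, impossible for $t\neq 1$. Therefore $t=1$, $(X,\xi)$ is a standard circle, and the identification $X\to(X,\xi)$ affinely conjugates $G$ to $K$ in an orientation-preserving way.

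The hard part will not be conceptual but a matter of careful bookkeeping: I need to verify that the decorated invariants $t'$ and $t''$ are genuinely preserved by the respective classes of allowed conjugation, which ultimately rests on the explicit description $\Aut_{\mathbf{Aff}}^+(\Theta_t)\simeq\R/\Z$ together with the invariance, under piecewise-affine-orientation-preserving homeomorphisms, of the orientation of the universal covering of $(X,\xi)$.
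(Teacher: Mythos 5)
Your proposal is correct and follows essentially the same route as the paper: the author's own proof is precisely a sketch saying "do the proof of Theorem \ref{tsg} with all second-order terms erased," i.e.\ encode compatible affine structures by the first-order jet data $\mathcal{A}^1(X)$ (your "one-sided jets"), get transfixing from Lemma \ref{fwrelie} and relative Property FW, invoke Corollary \ref{cormain3}, use uniqueness of the invariant structure plus closedness of the automorphism group to pass from the dense subgroup to all of $G$, and then read off the classification from the list $\{\Theta_t\}$ together with the observation that $\Theta_t$ ($t>1$) has no orientation-reversing automorphism, which is exactly how you handle $t'$, $t''$ and the contrast case for $K$. No gaps to flag.
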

\begin{proof}[Sketch of proof]
This can be done similarly as Theorem \ref{tsg}, but with the significant simplification of ``erasing all order 2 terms" in the preliminary work. Namely, we work with $\mathcal{L}^1(X)=X^\pm\times\R_{>0}$, and $\mathcal{A}^1(X)$ defined as those functions $\nu:X^\pm\to\R_{>0}$ such that $\nu(\hat{x})=\nu(x)^{-1}$ for all $x$ and equal to 1 outside a finite subset. Then one can encode compatible affine structures by elements of $\mathcal{A}^1(X)$; here compatible is in the affine sense, meaning that the identity map is piecewise affine.

The sequel is similar, with some specific points we now emphasize. The affinely modeled curves $\Theta_t$ for $t>1$ have no orientation-reversing automorphism. This has no such analogue in the projective setting. Since the oriented isomorphism type of $(X,\xi)$ is determined by the $\PC^{0}_{\mathbf{Aff}}(X)$-conjugacy class of $G$, this conjugacy classification corresponds to oriented affine structures, as described.

This phenomenon reappears if one considers classification of actions, since the action of ``small positive" elements of $K^+$ determines an orientation. 
\end{proof}

\subsection{Explicit formulas for commensurating actions}

Let us provide explicit commensurating actions of groups of piecewise $\mathcal{C}^k$-transformations for $k=0,1,2$. I obtained them I started this work, before I realized that the formalism of partial actions could get around such computations: while for $k=0$ it is easy and practical, for $k=2$ it becomes quite cumbersome and I did no attempt beyond. Still, it may be instructive to mention these formulas, notably to show what the universal globalization allows to conceal. I will provide no proof.

\subsection{The continuous case}
Let $X$ be an oriented standard curve. Define $X^\pm=X\times\{\pm 1\}$. 
We let $\PC(X)$ act on $X^\pm$ exactly as in the proof of Lemma \ref{bousch}.  For $y=(x,\eps)\in X^\pm$, we write $\hat{y}=(x,-\eps)$.

Define $\mathcal{L}^0_X=(X^\pm)^2$, and let $\PC(X)$ act diagonally.
Define $M=\mathcal{M}^0_X\subset \mathcal{L}^0_X$ as the set of pairs $(y,\hat{y})$ when $y$ ranges over $X^\pm$. Then $M\smallsetminus\sigma^{-1}M$ is the set of $(y,\pm 1)$ when $y$ ranges over outer discontinuity points of $\sigma$. For most applications this is enough; for precise counting results it can be convenient to rather work in the set of unordered pairs of distinct elements. It is instructive to interpret what $M$ being transfixed means and to thus prove the conjugacy results (for $\PC$, $\PC^+$, $\IETbw$, $\IET^+$).

\subsection{The derivable/affine case}

Define $\mathcal{L}^1_X=(X^\pm)^2\times\R_{>0}$. Recall that
$\PC_{\mathcal{C}^1\sharp}(X)$ denotes the set of piecewise-$\mathcal{C}^1$ self-diffeomorphisms of $X$.
For $\sigma\in\PC_{\mathcal{C}^1\sharp}(X)$, we define
\[\sigma\cdot (u,v,y)=\left(f(u),f(v),\frac{f'(u)}{f'(v)}y\right).\]

A simple computation (or a computation-free interpretation of the formula) shows that this is an action.

Here $f'(u)$ for $u=(x,\eps)$ is defined in the only natural way: choosing a local orientation-preserving affine chart around the first coordinate $f(u)_1$ of $f(u)$, we have $f'(u)=\lim_{t\to 0^+}(f(x+\eps t)-f(u)_1)/t$.

Define $M=\mathcal{M}^1_X=\{(x,\hat{x},1):x\in X\}$. Then $M\smallsetminus\sigma M$ is the set of $(x,\hat{x},1)$ where $\sigma$ cannot be assigned a value at $x$ for which it is of class $\mathcal{C}^1$ at $x$.

Then the transfixing property implies the transfixing property of the coarser quotient action on $\mathcal{L}^0_X$, and then the piecewise continuous setting yields a first (piecewise affine) conjugacy to a continuous action. This allows, in a second step, to work in the continuous setting (the conjugacy preserves the transfixing property in the new model).

Assume now that we are working with piecewise affine self-homeomorphisms and the invariant subset $\mathcal{L}^1_X=(X^\pm)^2\times\R_{>0}$, with the induced action.
Consider the set $\mathcal{A}^{1}_X$ of functions $\nu$ defined on a cofinite subset $X^\pm$ to $\R_{>0}$, taking the value $1$ outside a finite subset, and satisfying the condition $\nu(\hat{u})=1/\nu(u)$ for all $u\in X^\pm$ at which it is defined, and encode an affine structure on $X$ such that the identity map becomes piecewise affine, with such a function.

Since the set of functions from $X^\pm$ to $\R_{>0}$ can be thought of as a subset of $\mathcal{L}^{1,0}_X=\{(u,v,y)\in\mathcal{L}^1_X:v=\hat{u}\}$, it inherits an action of $\PC^{0}_{\mathcal{C}^1\sharp}(X)$. Then given such $\nu$, defined on the complement of a finite subset $F_\nu$, the condition of preserving $\nu$, for $\sigma\in\PC^{0}_{\mathcal{C}^1\sharp}(X)$, is equivalent to leaving $F_\nu$ invariant and acting as $\mathcal{C}^1$-diffeomorphisms on the complement of $F_\nu$. In particular, if $\sigma$ is piecewise affine, it means leaving $F_\nu$ invariant and acting as affine automorphisms on the complement of $F_\nu$. Finally, one checks that for a subgroup of $\PC^{0}_{\mathcal{C}^1\sharp}(X)$, the condition of transfixing $\mathcal{M}^1_X$ is equivalent to the existence of a $\Gamma$-invariant element $\nu\in \mathcal{A}^{1,0}_X$.

The above commensurating action (modulo a minor nuance), or rather the affine isometric Hilbertian action it induces, appears in \cite[\S 5]{LMT} in the continuous case, but without the interpretation in terms of affine structures.

\subsection{The doubly derivable/projective case}

To simplify, we stick to the continuous case. Indeed, the set has already been defined in \S\ref{conoac}, namely $\mathcal{L}^2(X)=X^\pm\times\R_{>0}\times\R$, with a somewhat complicated action. 

A commensurated subset $\mathcal{M}^2$ is given by the set of triples $(x,1,0)$ when $x$ ranges over $X^\pm$. This illustrates how it is frequent in commensurating actions that the commensurated subset is simpler to define than the whole set, and the benefit of using partial actions.

Now consider the set $\mathcal{A}^{2}_X$ of functions $\mu$ defined on a cofinite subset of $X^\pm$, valued in $\R_{>0}\times\R$, taking the value $(1,0)$ outside a finite subset, and satisfying the condition $\mu(\hat{x})=\tau(\mu(x))$. This is a little generalization of $\mathcal{A}^2(X)$ which only considers everywhere defined functions. Such functions define a compatible projective structure outside a finite subset.

The point is that the transfixing property implies the existence of such an invariant partially defined function (which, in the absence of finite orbit, implies the existence of a globally defined one), defined outside an invariant finite subset $F$ and then, the conjugation to an action of class $\mathcal{C}^2$ outside $F$, and projective outside $F$ if we started from a piecewise projective action.

\renewcommand{\thesubsection}{{\thesection.\arabic{subsection}}} 
\appendix
\section{Affinely, projectively modeled curves and their automorphisms}\label{s_projcur}
This section classifies affinely and projectively modeled curves and describe their automorphism groups. 

\subsection{Historical remarks: a recurrent mistake}
The easy affine classification was obtained by Kuiper \cite{Kui}: the complete case is trivial: there are only $\R$ and $\R/\Z$, while the non-complete case puts forward the non-standard circles $\R_{>0}/\langle t\rangle$.  Paradoxically, while in the affine case the complete case is the most trivial and less surprising part, in the projective case the non-complete case brings essentially nothing new (the compact ones come from the affine world) while the complete case is richer. The classification was established by Kuiper (1954) \cite{Ku2}, with an inaccuracy (namely, in the notation below, Kuiper did not distinguish the metaparabolic circles $\Xi_{n,+}$ and $\Xi_{n,-}$). With a similar approach, Goldman \cite{Gol2} later claimed to also obtain this classification, but actually rather establishes a correspondence to the classification of conjugacy classes in the universal covering of $\widetilde{\PSL}_2(\R)$, without providing details on the latter. In the analytic context of Hill equations, an equivalent classification to Kuiper's was obtained by Lazutkin and Pantrakova (1975) \cite{LP} which fixes Kuiper's error in another language (Kuiper is not quoted). Later, G. Segal (1981) \cite{Seg}, also not quoting Kuiper, claims to correct an error in \cite{LP}, but instead resurrects Kuiper's error, based on the same incorrect classification of $\widetilde{\PGL}_2(\R)$-conjugacy classes in $\widetilde{\PSL}_2(\R)$!

Kuiper's error reappeared at various places (often ``rediscovered"), and was fixed at other places, often not even noticing that there is an error, or by authors quoting several contradictory results without noticing the difference. The error is fixed in \cite{LP}, and a careful classification appears in \cite{BFP,Gor}; Gorinov \cite{Gor} is the first to explicitly mention the error, and also the first to fix it in the language of geometric structures (used in \cite{Ku2} and also here). Most other references are concerned with the Hill equation, which amounts to classifying orbits of the Bott-Virasoro extension of the diffeomorphism group on its Lie algebra, and this approach is much more complicated. 

Below, the classification and the computation of automorphism groups are done in the same impetus (carrying out the classification allowing to introduce notation). A description of orientation-preserving automorphism groups of projectively modeled curves is claimed in \cite{Guieu}, but the result is incorrect (and the proof way too long). Indeed, for the projectively modeled curves $\Xi_{n,\pm}$ and $\Xi_{n,t}$, he obtains that the orientation-preserving automorphism group is isomorphic to $(\R/\Z)\times(\Z/n\Z)$, while it is actually isomorphic to $\R\times(\Z/n\Z)$. Since the classification of maximal compact subgroups of automorphisms is needed here, this is an essential difference. So I am not aware of any prior reference for the classification of automorphism groups of projectively modeled curves.

\subsection{Generalities}\label{apgen}
We consider the pseudogroups $\mathbf{Aff}$ and $\mathbf{Proj}$ of Example \ref{exfirstco}. By affinely and projectively modeled curves we mean $\mathbf{Aff}$-modeled and $\mathbf{Proj}$-modeled curves.

Denote by $\Sigma_\infty$ the universal covering of $\mathbf{P}^1(\mathbf{R})$. We identify it with $\mathopen]-\infty,+\infty]\times\Z$ with the lexicographic order. Restricting the universal covering map $(t,n)\mapsto t$ to open subsets on which it is injective, we obtain charts for a $\mathbf{Proj}$-structure on $\Sigma_\infty$ (note that it is not finitely-charted).

\begin{lem}
Any $\mathbf{Aff}$-modeled Hausdorff simply connected curve $X$ is isomorphic to an open interval in $\R$.

Any $\mathbf{Proj}$-modeled Hausdorff simply connected curve $X$ is isomorphic to an open interval in $\Sigma_\infty$.
\end{lem}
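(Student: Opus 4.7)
The plan is a standard developing map argument. A connected simply connected curve $X$ is, by the definition of curve used in \S\ref{doubpoints} (finite disjoint union of copies of $\R$ and circles), homeomorphic to the real line. The two models of interest are $A=\R/\Z$ with the affine pseudogroup, and $A=\mathbf{P}^1_\R$ with the projective pseudogroup, with respective universal covers $\tilde A=\R$ and $\tilde A=\Sigma_\infty$. The key observation underlying the argument is that affine (respectively projective) local homeomorphisms of $A$ lift canonically to local affine (respectively projective) homeomorphisms of $\tilde A$, as already built into the $\mathbf{Proj}$-structure on $\Sigma_\infty$ given just above this lemma. Consequently the $S$-structure on $X$ may be analyzed by maps into $\tilde A$ rather than $A$.

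Next I would construct a developing map $D:X\to\tilde A$ by analytic continuation. Fix a basepoint $x_0\in X$, a chart $\phi_0$ at $x_0$, and a lift $\tilde\phi_0:U_0\to\tilde A$ of $\phi_0$ through the universal covering $\tilde A\to A$. For any $x\in X$, cover some path from $x_0$ to $x$ by finitely many chart domains $U_0,U_1,\dots,U_n$ with connected consecutive overlaps meeting the path; on each overlap the change of chart lies in $\hat S$ and lifts uniquely to $\tilde A$ once its value at one point is prescribed, producing inductively compatible lifts $\tilde\phi_i:U_i\to\tilde A$. Because $X$ is Hausdorff and simply connected, a standard monodromy computation shows that the resulting value at $x$ depends neither on the subdivision nor on the path, so setting $D(x)=\tilde\phi_n(x)$ defines a global map $D:X\to\tilde A$ which is a local $S$-isomorphism. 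This is the only step that requires genuine work, and it is the routine groundwork underlying every $(G,X)$-structure discussion.

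Finally, $\tilde A$ is homeomorphic to $\R$ in both cases (since $\Sigma_\infty$ is the universal cover of a topological circle, its natural total order from the lexicographic identification given above makes it order-isomorphic to $\R$). Since $X$ is also homeomorphic to $\R$, the map $D$ is a continuous locally injective map between two copies of $\R$, hence strictly monotonic, hence injective. Its image is a connected open subset of $\tilde A$, i.e., an open interval of $\R$ in the affine case, or of $\Sigma_\infty$ in the projective case, and $D$ yields the asserted isomorphism of $S$-modeled spaces. The only mild pitfall to avoid is conflating a general local homeomorphism of 1-manifolds (which may be far from injective when the source is a circle) with one having a simply connected source; here simple connectedness of $X$ forces injectivity, so no further argument is needed.
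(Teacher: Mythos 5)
Your argument is correct and is essentially the same as the paper's: both rest on a developing map $X\to\R$ (resp.\ $X\to\Sigma_\infty$, obtained in the paper by lifting a developing map $X\to\mathbf{P}^1_\R$) followed by the observation that a locally injective continuous map between intervals is injective. The only difference is that the paper does not construct the developing map by hand but cites Goldman's Proposition~4.5 for its existence.
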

\begin{proof}
In the affine case, by \cite[Prop.\ 4.5]{Gol}, there is a locally projective immersion $X\to\R$. Since a locally injective continuous map between intervals is injective, the latter map has to be injective.

In the projective case, by \cite[Prop.\ 4.5]{Gol}, there is an locally projective immersion $X\to\mathbf{P}^1_\R$, which therefore lifts to a locally projective immersion $X\to \Sigma_\infty$. We deduce injectivity by the same argument.
\end{proof}

\subsection{The affine case}
This is especially an ap\'eritif to the projective case, otherwise we would just give a list without proof. 

The group of affine self-transformations of $\R$ is 2-transitive and contains orientation-reversing elements. Hence there are only three open intervals up to affine automorphism: $\R$, $\R_{>0}$, and $\mathopen]-1,1\mathclose[$. 

Let us list in each case the fixed-point-free automorphisms up to conjugation. 

\begin{itemize}
\item For $\mathopen]-1,1\mathclose[$ the automorphism group is reduced to a group of order 2, and hence there is no fixed-point-free automorphism.
\item For $\R_{>0}$, the automorphism group is reduced to positive homotheties $u_t=x\mapsto tx$, $t>0$; note that it preserves an orientation: indeed, this interval $\R_{>0}$ has one ``complete" end ($+\infty$) and a non-complete one ($0$). The automorphisms $u_t$ for $t>0$ are pairwise non-conjugate in $\R_{>0}$, and fixed-point-free for $t\neq 1$. We have $u_t^{-1}=u_{t^{-1}}$. We endow the quotient $\Theta_t=\R_{>0}/\langle u_t\rangle$ with the orientation inherited from $\R_{>0}$ if $t>1$, and with the reverse orientation if $t<1$. Thus the oriented affinely modeled curves $\Theta_t$ for $t\in\R_{>0}$ are pairwise non-isomorphic. In the non-oriented setting, for $t>1$, they are pairwise non-isomorphic, while $\Theta_t$ and $\Theta_{t^{-1}}$ are isomorphic (indeed equal!).

The affine automorphism group of $\Theta_t$ is the normalizer of $\langle u_t\rangle$ (hence the whole affine automorphism group of $\R_{>0}$ here) modulo $\langle u_t\rangle$, hence is naturally isomorphic to $\R_{>0}/\langle t\rangle$ and non-canonically isomorphic to $\R/\Z$; it coincides with the oriented automorphism group.
\item For $\R$, the automorphism group consists of all affine automorphisms, and the fixed-point-free ones are translations, and are all conjugate; actually the cyclic subgroups generated by translations are all conjugate by orientation-preserving affine automorphisms, and the quotient of $\R$ by such cyclic subgroups we obtain is unique up to affine isomorphism: one representative is the standard circle $\R/\Z$. The normalizer of the given cyclic subgroup is the group of isometries of $\R$ and hence the affine automorphism group is isomorphic to $(\R/\Z)\rtimes(\Z/2\Z)$, and can be identified to the group of isometries of $\R/\Z$. We denote it by $\Theta_1$.
\end{itemize}

We conclude the following classification of affinely modeled curves and their automorphism groups. In the left column, $\bullet$ and $\circ$ mean complete vs non-complete.
\begin{center}
  \begin{tabular}{|c|cc|c|c|c|}
  \hline
$\bullet/\circ$ & \multicolumn{2}{c|}{aff. mod. curve}& univ.\ cover & $\Aut^+\simeq$ & $\Aut\simeq$\\
    \hline
$\bullet$& $\R$& & itself & $\R\rtimes\R_{>0}$ & $\R\rtimes\R^*$\\
$\circ$ &$\R_{>0}$& & itself & $\R_{>0}\simeq\R$ & $=\Aut^+$\\
$\circ$ &$\mathopen]-1,1\mathclose[$& & itself & $\{1\}$ & $\{\pm 1\}$\\
$\bullet$ &$\Theta_1$&  & $\R$ & $\R/\Z$& $(\R/\Z)\rtimes (\Z/2\Z)$ \\
$\circ$ &$\Theta_t$& $t\!>\!1$ & $\R_{>0}$ & $\R_{>0}/\langle t\rangle\simeq\R/\Z$ & $=\Aut^+$\\
    \hline
   \end{tabular} 
\end{center}

Note that we have several ways of defining affinely modeled curves using a pseudogroup. The chosen way does not affect the classification as above. However, it affects the notion of being finitely charted (i.e. having a finite atlas in the model space). For the most standard way to define affinely modeled curves, one uses $\R$ as model space and all the above curves are finitely-charted. However, in our work and applications, the natural model space is rather the standard affine structure on $\R/\Z$. Then $\R$ and $\R_{>0}$ are not finitely-charted and only $\mathopen]-1,1\mathclose[$, $\Theta_1$ and $\Theta_t$ remain; in particular the orientation-preserving automorphism group of any finitely-charted affine manifold (modeled on $\R/\Z$) is always abelian.

\subsection{The projective case}\label{ap_proj}
As reminded in \S\ref{apgen}, we have to classify open intervals in $\Sigma_\infty$, and then classify their fixed-point-free automorphisms to obtain the compact projective manifolds.

The automorphism group of $\Sigma_\infty$ contains the orientation-preserving automorphism group as a subgroup of index 2, and the latter can be identified with the universal covering $\widetilde{\mathrm{SL}}_2$ of $\mathrm{SL}_2(\R)$. Since the conjugation by a reflection yields a automorphism of order 2, we can view the automorphism group as a group $\widetilde{\mathrm{PGL}}_2$, projecting to $\mathrm{PGL}_2(\R)$ with an infinite cyclic kernel (which is the center of $\widetilde{\mathrm{SL}}_2$). The action of $\widetilde{\mathrm{PGL}}_2$ on $\R$ is transitive, but not transitive on ordered pairs. Namely, the stabilizer of $(\infty,0)\in \Sigma_\infty$ acts on $\Sigma_\infty$ with countably many orbits: the singletons $(\infty,n)$, $n\in\Z$, and the intervals in between. Therefore, we can classify open intervals in $\Sigma_\infty$ up to automorphisms. (Since parentheses are used for pairs, we use the ``French" notation $\mathopen]a,b\mathclose[$ for the open interval $\{x:a<x<b\}$.) 

\begin{itemize}
\item $\Sigma_\infty$ itself (not finitely-charted)
\item the interval $\Sigma_\infty^+$ of elements $>(\infty,0)$ in $\Sigma_\infty$ (not finitely-charted)
\item the interval $\Sigma_n=\mathopen](\infty,0),(\infty,n)\mathclose[$ in $\Sigma_\infty$ ($n\ge 1$ integer).
\item the interval $\Sigma_{n-\frac12}=\mathopen](0,1),(\infty,n)[$ in $\Sigma_\infty$ ($n\ge 1$ integer).
\end{itemize}

This yields the classification of simply connected projectively modeled curves. It remains to obtain the classification of compact connected projectively modeled curves, so we have to consider, among the above curves, which ones have a fixed-point-free orientation-preserving automorphism, and classify these up to conjugation by an automorphism. We start with the easier non-complete case, that is, those for which the universal covering is properly contained in $\Sigma_\infty$.

In $\Sigma_\infty^+$, in $\Sigma_n$ or $\Sigma_{n-\frac12}$ for $n\ge 2$, the element $(\infty,1)$ is fixed by every orientation-preserving automorphism. The remaining intervals are $\Sigma_1\simeq\R$ and $\Sigma_{\frac12}\simeq\R_{>0}$ (and $\Sigma_\infty$, which we consider afterwards). Here $\R$ and $\R_{>0}$ can be viewed as subsets of the projective line. 

\begin{itemize}
\item In $\Sigma_1\simeq\R$, the automorphism group is the affine group. Fixed-point-free elements are nonzero translations, and are all conjugate. Let $\Theta_1$ be the corresponding curve; call it the round circle. 
\item In $\Sigma_{\frac12}\simeq\R_{>0}$, the orientation-preserving automorphism group consists of the positive homotheties $u_t:x\mapsto tx$; for $t\neq 1$ these are fixed-point-free. The automorphism group also contains $t\mapsto t^{-1}$; thus $u_t$ is conjugate to $u_{s}$ if and only $s\in\{t,t^{-1}\}$. Let $\Theta_t$ be the corresponding curve ($t\in\R_{>0}\smallsetminus\{1\}$); thus $\Theta_t$ are pairwise non-isomorphic as oriented projectively modeled curves, and $\Theta_t$ and $\Theta_{t^{-1}}$ are (orientation-reversing) isomorphic as projectively modeled curves.
\end{itemize}

\begin{rem}
Note that the $\Theta_t$ already appear in the affine classification. But there are a few little differences: as affinely modeled curve, $\Theta_1$ is complete, while it is not complete as projectively modeled curve. The other difference is that, for $t\neq 1$, $\Theta_t$ and $\Theta_{t^{-1}}$ are isomorphic as oriented projectively modeled curves, an isomorphism being induced by $x\mapsto x^{-1}$. This difference is also reflected in the fact that $\Theta_t$ admits orientation-reversing projective automorphisms.
\end{rem}

Now we have classified the non-complete projectively modeled curves, we need to compute their automorphism groups (not anymore for classification, but because we obtain results of conjugation into the automorphism group of a projectively modeled curve). In the simply connected case we already did the job. In the compact case, where it was obtained as quotient $X=\langle r\rangle C$ of a simply connected projectively modeled curve by a cyclic group of automorphisms acting freely, we have to compute the normalizer $N_r$ of this cyclic subgroup $\langle r\rangle$; then the automorphism group $A$ of $X$ is $N_r/\langle r\rangle$.

\begin{itemize}
\item For $\Sigma_1\simeq\R$, and $r(x)=x+1$, the normalizer $N_r$ is the group of isometries of $\R$, and $A$ can be viewed as the group of of isometries of $X\simeq\R/\Z$.
\item For $\Sigma_{\frac12}\simeq\R_{>0}$ and $r(x)=tx$ with $t>1$, the cyclic subgroup $\langle r\rangle$ is normal in the whole automorphism group $\R_{>0}\rtimes\langle\tau\rangle$ (where $\tau(x)=x^{-1}$). The quotient $(\R_{>0}/\langle t\rangle)\rtimes\langle\tau\rangle$ is also (non-canonically) isomorphic to the group of isometries of $\R/\Z$.
\end{itemize}

We summarize the classification of non-complete finitely-charted projectively modeled curves up to isomorphism, along with their orientation-preserving automorphism group. Note that the only infinitely-charted one is $\Sigma_\infty^+$.
\medskip

\begin{center}
  \begin{tabular}{|c|c|c|c|}
  \hline
proj. mod. curve & universal cover & $\Aut^+\simeq$ & $\Aut\simeq$\\
    \hline
$\Sigma_n$, $n\in\N_{>0}$ & itself & $\R\rtimes\R_{>0}$ & $\R\rtimes\R^*$\\
$\Sigma_{n-\frac12}$, $n\in\N_{>0}$ & itself & $\R$ & $\R\rtimes (\Z/2\Z)$\\
standard affine $\Theta_1$  & $\Sigma_1\simeq\R$ & $\R/\Z$& $(\R/\Z)\rtimes (\Z/2\Z)$ \\
non-standard affine $\Theta_t$, $t> 1$ & $\Sigma_{1/2}\simeq\R_{>0}$ & $\R/\Z$ & $(\R/\Z)\rtimes (\Z/2\Z)$\\
    \hline
   \end{tabular} 
\end{center}

Let us now deal with complete curves. In $\Sigma_\infty$, the orientation-preserving automorphism group consists of $\widetilde{\mathrm{SL}}_2$. We call elements of $\widetilde{\mathrm{SL}}_2$ metaelliptic, metaparabolic, or metahyperbolic according to the corresponding behavior of the projection on $\mathrm{PSL}_2(\R)$, with the convention that elements mapping to 1 are metaelliptic. We add ``meta" because the wording ``elliptic", etc, does not reflect the dynamical behavior on $\Sigma_\infty$: for instance non-identity metaelliptic elements rather behave as loxodromic elements on $\Sigma_\infty$.

\begin{itemize}
\item Non-identity metaelliptic elements are fixed-point-free on $\Sigma_\infty$. Every metaelliptic element is conjugate to an element in the inverse image $\widetilde{\mathrm{SO}}_2$ of $\mathrm{SO}_2$. Write this 1-parameter subgroup as $(\xi_r)_{r\in\R}$, so that the center of $\widetilde{\mathrm{SL}}_2$ consists of the $\xi_r$ for $r\in\Z$. Lifting conjugation by the lift of an orthogonal reflection conjugates $\xi_r$ to $\xi_{-r}$. These are the only conjugacies among the $\xi_r$ since the rotation number of $\xi_r$ is $r$, and rotation number is conjugacy-invariant up to sign in the whole automorphism group $\widetilde{\mathrm{PGL}}_2$ (and is conjugacy-invariant in $\widetilde{\mathrm{SL}}_2$). Since we are interested in the cyclic subgroup generated by $\xi_r$, we can restrict to $r>0$.
We define $\Omega_r$ ($r>0$) as the corresponding projectively modeled curve (quotient of $\Sigma_\infty$ by $\langle\xi_r\rangle$).

\item Metahyperbolic and metaparabolic elements have a fixed point on $\mathbf{P}^1_\R$; up to conjugating in $\widetilde{\mathrm{SL}}_2$, we can suppose that $\infty$ is fixed. Its action on $\mathbf{P}^1_\R$ is therefore given by some affine map $A:x\mapsto tx+b$ with $t>0$ and $(t,b)\neq (1,0)$. Conjugation inside the positive affine group reduces to either $b=0$ or ($t=1$ and $b\in\{1,-1\}$), which we therefore assume. For its action on $\Sigma_\infty$, the element $\xi$ preserves the subset $\{\infty\}\times\Z$, on which it acts as a translation, say by some $n\in\Z$. Then $\xi=\xi_{n,A}$ is determined by $A$ and $n$, namely $\xi_{n,A}(x,m)=(Ax,n+m)$ in the previous coordinates. It is fixed-point-free if and only if $n\neq 0$. Write $\xi_{n,t}$ (resp.\ $\xi_{n,\pm}$) for $\xi_{n,A}$ when $A(x)=tx$ (resp.\ $A(x)=x\pm 1$)

We have to classify the cyclic subgroups $\langle\xi_{n,A}\rangle$ up to conjugation in the automorphism group $\widetilde{\mathrm{PGL}}_2$ of $\Sigma_\infty$. Let us start with the elements $\xi_{n,A}$ themselves. Since $n$ is the rotation number, it is determined by conjugation up to sign; also $\{t,t^{-1}\}$ is determined by conjugation (by conjugacy classification in $\mathrm{PSL}_2(\R)$). Given this, the elements we have not yet distinguished are (for a given $n\in\N_{>0}$ and $t\ge 1$):

\begin{itemize}
\item on the one hand, the four elements $\xi_{n,t}$, $\;\xi_{n,t^{-1}}$, $\;\xi_{-n,t}(=\xi_{n,t^{-1}}^{-1})$, $\;\xi_{-n,t^{-1}}(=\xi_{n,t}^{-1})$;
\item on the other hand, the four elements $\xi_{n,+}$, $\;\xi_{n,-}$, $\;\xi_{-n,+}(=\xi_{n,-}^{-1})$, $\;\xi_{-n,-}(=\xi_{n,+}^{-1})$.
\end{itemize}

Define two particular automorphisms of $\Sigma_\infty$ as follows: $s(x,m)=(-x,-m)$ for $x\neq\infty$ and $s(\infty,m)=(\infty,-m-1)$; $w(x,m)=(-x^{-1},m+1)$ for $x>0$ and $w(x,m)=(-x^{-1},m)$ for $x\le 0$. Note that $w$ is orientation-preserving.

Then $w^{-1}\xi_{n,t}w=\xi_{n,t^{-1}}$, $s^{-1}\xi_{n,t}s=\xi_{-n,t}$, $s^{-1}\xi_{n,\pm}s=\xi_{n,\pm}^{-1}$. This shows that the first four elements $\xi_{\pm n,t^{\pm 1}}$ are conjugate in $\widetilde{\mathrm{PGL}}_2$, and the two corresponding cyclic subgroups $\langle\xi_{n,t}\rangle$, $\langle\xi_{n,t^{-1}}\rangle$ are conjugate within $\widetilde{\mathrm{SL}}_2$. On the other hand, this leaves the possibility that the subgroups $\langle\xi_{n,+}\rangle$ and $\langle\xi_{n,-}\rangle$ are not conjugate at all. It is indeed the case that they are not conjugate: indeed, a conjugating element should fix the unique fixed point in $\mathbf{P}^1_\R$, and thus preserve $\{\infty\}\times\Z$, and these are precisely the elements we have tested. We write $\Xi_{n,t}=\Sigma_\infty/\langle \xi_{n,t}\rangle$ for $t>1$, and $\Xi_{n,\pm}=\Sigma_\infty/\langle\xi_{n,\pm}\rangle$.
\end{itemize}

\begin{rem}
1) Recall that $\xi_{n,\pm}(x,m)=(x\pm 1,n+m)$ for $(x,m)\in\Sigma_\infty$ (identified as above to $\mathopen]-\infty,+\infty]\times\Z$ with the lexicographic ordering).
We have observed that, for $n\in\Z\smallsetminus\{0\}$ the cyclic subgroups $\langle\xi_{n,+}\rangle$ and $\langle\xi_{n,-}\rangle$ are not conjugate in $\widetilde{\mathrm{PGL}}_2$. The error in Kuiper's classification \cite{Ku2}, reproduced at many other places (but fixed in \cite{LP,BFP,Gor}, and consciously only in \cite{Gor}) is to not distinguish those elements (essentially, the error amounts to deducing their conjugacy from the fact that their images in $\PSL_2(\R)$ are conjugate). 

2) That $w^{-1}\xi_{n,t}w=\xi_{n,t^{-1}}$ is quite a subtle point (the subtlety is to consider $w$). This is another gap in \cite{Ku2}, which takes for granted that we immediately boil down to homotheties $x\mapsto tx$ for $t>1$; this gap does not result in another error thanks to this possibly unexpected conjugacy. This point is carefully taken care of in \cite{Gor}. 
\end{rem}

Let us mention, as a digression of independent interest, that this lack of conjugacy exists at a purely topological level:

\begin{prop} For every $n\in\N_{\ge 1}$, the cyclic subgroups $\langle\xi_{n,+}\rangle$ and $\langle\xi_{n,-}\rangle$ are not conjugate in the group $\widetilde{\mathrm{Homeo}}(\mathbf{P}^1_\R)$, namely the normalizer in $\mathrm{Homeo}(\Sigma_\infty)$ of the cyclic subgroup $\langle\xi_1\rangle$. More precisely, for any $k\in\N_{\ge 3}\cup\{\infty\}$ not dividing $2n$, the cyclic subgroups $\langle\xi_{n,+}\rangle$ and $\langle\xi_{n,-}\rangle$ are not conjugate in the normalizer $\widetilde{\mathrm{Homeo}}^{(k)}(\mathbf{P}^1_\R)$ of $\langle\xi_1\rangle$ in the group of self-homeomorphisms of $\Sigma_k$ (which contains $\PGL_2^{(k)}(\R)$).
\end{prop}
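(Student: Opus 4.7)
The plan is to derive a contradiction from a putative $\phi\in\widetilde{\mathrm{Homeo}}^{(k)}(\mathbf{P}^1_\R)$ conjugating $\langle\xi_{n,+}\rangle$ onto $\langle\xi_{n,-}\rangle$, hence satisfying $\phi\xi_{n,+}\phi^{-1}=\xi_{n,-}^{\epsilon}$ for some $\epsilon\in\{\pm 1\}$. The first statement of the proposition is the $k=\infty$ case of the second, so I would prove only the second. I would use the explicit model $\Sigma_\infty=\mathopen]-\infty,+\infty]\times\Z$ with the lexicographic order and, for finite $k$, $\Sigma_k=\Sigma_\infty/\langle\xi_k\rangle$ as a cyclically ordered set with second coordinate $m\in\Z/k\Z$; then $\xi_1(t,m)=(t,m+1)$ and $\xi_{n,\pm}(t,m)=(t\pm 1,m+n)$.

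First I would establish two structural reductions. Since any generator of $\langle\xi_{n,+}\rangle$ or $\langle\xi_{n,-}\rangle$ projects to a translation on $\mathbf{P}^1_\R$ fixing only $\infty$, the fibre $F=\{[\infty,m]\}_m$ of $\Sigma_k\to\mathbf{P}^1_\R$ is the unique $\langle\xi_1\rangle$-orbit in $\Sigma_k$ invariant under $\xi_{n,+}$, and equally under $\xi_{n,-}^{\epsilon}$; as $\phi$ permutes $\langle\xi_1\rangle$-orbits, this forces $\phi(F)=F$. Next, since $\phi$ normalizes $\langle\xi_1\rangle$, the element $\phi\xi_1\phi^{-1}$ is a generator of $\langle\xi_1\rangle$; the rotation number of $\xi_1$ on $\Sigma_k$ (equal to $1/k$, or, for $k=\infty$, replaced by the pointwise inequality $\xi_1(x)>x$) pins this down to $\xi_1^{\varepsilon}$, with $\varepsilon=+1$ iff $\phi$ preserves the (cyclic or linear) order on $\Sigma_k$. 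Now decompose $\Sigma_k\setminus F$ into open arcs $I_m\simeq\R$ parameterized by the $t$-coordinate and indexed by $m\in\Z/k\Z$ (or $\Z$ for $k=\infty$); write $\phi(I_m)=I_{\sigma(m)}$ and let $f_m\colon\R\to\R$ be $\phi|_{I_m}$ read in $t$-coordinates. Reading the intertwining $\phi\xi_1=\xi_1^{\varepsilon}\phi$ componentwise, and using that $\xi_1$ is trivial in the $t$-coordinate, forces $f_{m+1}=f_m$ and $\sigma(m+1)=\sigma(m)+\varepsilon$; hence $f_m=f$ is a single strictly monotone homeomorphism of $\R$ (increasing if $\varepsilon=+1$, decreasing if $\varepsilon=-1$), and $\sigma(m)=\varepsilon m+c$ for some integer $c$.

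Finally, expanding $\phi\xi_{n,+}\phi^{-1}=\xi_{n,-}^{\epsilon}$ on $(t,m)$ yields simultaneously the index equation $\sigma(m+n)-\sigma(m)=\epsilon n$, equivalent to $(\varepsilon-\epsilon)n\equiv 0\pmod k$, and the functional equation $f(t+1)-f(t)=-\epsilon$. If $\varepsilon\neq\epsilon$, the first gives $k\mid 2n$ (and for $k=\infty$ forces $n=0$), contradicting the hypothesis $k\nmid 2n$; if $\varepsilon=\epsilon$, the second reads $f(t+1)-f(t)=-\varepsilon$, which has the wrong sign for the monotonicity of $f$. Either alternative yields a contradiction, proving that no such $\phi$ exists. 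The main obstacle I anticipate is the four-way bookkeeping of signs between $\varepsilon$ (orientation of $\phi$), $\epsilon$ (which generator of $\langle\xi_{n,-}\rangle$ we conjugate to), and the direction of translation inherent in $\xi_{n,\pm}$; once the two structural reductions are in place, however, the monotonicity observation is elementary and carries the essential content of the proposition.
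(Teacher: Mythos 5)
Your proof is correct and rests on the same two ingredients as the paper's: the index-shift/rotation-number constraint mod $k$, which uses the hypothesis $k\nmid 2n$ to rule out the orientation-mismatched cases, and the elementary fact that $t\mapsto t+1$ cannot be conjugated to $t\mapsto t-1$ by an increasing homeomorphism of $\R$. The paper packages this by first reducing (via rotation numbers and the reflection $s$) to an orientation-preserving conjugator sending $\xi_{n,+}$ to $\xi_{n,-}$ and then projecting to $\mathbf{P}^1_\R$, whereas you carry out the equivalent bookkeeping in explicit coordinates on $\Sigma_k$; the content is essentially the same.
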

\begin{proof}
Indeed, suppose by contradiction that they are conjugate in $E_k$. It means that the generators of these cyclic subgroups are conjugate, or that one is conjugate to the other's inverse. Let us show that the latter case implies the former case. In the latter case, there exists $b\in E_k$ such that $b\xi_{n,+}b^{-1}=\xi_{n,-}^{-1}$. This changes the rotation number $n$ to $-n$ (which lives in $\R/k\Z$). By assumption, $n\neq -n$ in $\R/k\Z$. Hence necessarily $b$ is orientation-reversing and thus $c=bs$ is orientation-preserving, that is, commutes with $\xi_1$, and $c\xi_{n,+}c^{-1}=\xi_{n,-}$. Projecting on $\mathrm{Homeo}^+(\mathbf{P}^1_\R)$, we obtain an orientation-preserving self-homeomorphism $\gamma$ conjugating $u$ to $u^{-1}$, with $u(x)=x+1$. Hence $\gamma$ fixed the unique fixed point $\infty$ of $u$ and thus this is a conjugacy in $\mathrm{Homeo}(\R)$. But $u(x)>x$ for all $x$, which is clearly an obstruction for $u$ to be conjugate to its inverse within orientation-preserving self-homeomorphisms of $\R$. This is a contradiction.

(Note that conversely, whenever $2n=0$ [$\mathrm{mod}$ $k$], the elements $\xi_{n,+}$ and $\xi_{n,-}$ are inverse to each other in $\PSL_2^{(k)}(\R)$, and thus the cyclic subgroups they generate are not only conjugate, but equal.)
\end{proof}

The classification of complete projectively modeled curves being completed, it remains to compute their automorphism groups. In each case, the automorphism group of the projectively modeled curve $\Sigma_\infty/\langle\xi\rangle$ is $N_\xi/\langle\xi\rangle$, where $N_\xi$ is the normalizer of $\langle\xi\rangle$ in the automorphism group.

To determine $N_\xi$, define some auxiliary subgroups, easier to determine. 
Namely, define $B_\xi$ as the stabilizer in $\mathrm{PGL}_2(\R)$ for conjugation of the image of $\{\xi,\xi^{-1}\}$ in $\mathrm{PSL}_2(\R)$; let $B_\xi$ be its unit component. Let $M_\xi$ as the inverse image in $\widetilde{\mathrm{PGL}}_2$ of the stabilizer $B_\xi$, and $M_\xi^0$ the inverse image of $B_\xi^\circ$ (note that $M_\xi^0$ is not necessarily connected). 
Clearly, $N_\xi\subset M_\xi$. Moreover, the unit component $M_\xi^\circ$ is included in $N_\xi$ (it is even included in the centralizer of $\xi$); since the center of $\widetilde{\mathrm{PGL}}_2$ is also included in the normalizer, it follows that $M_\xi^0$ is included in $N_\xi$. Thus $M_\xi^0\subset N_\xi\subset M_\xi$. We can deduce the automorphism group, in each case:

\begin{itemize}
\item For $\Omega_r=\Sigma_\infty/\langle\xi_r\rangle$, we have to discuss on whether $r\in\Z$. If $r=n\in\Z$, the subgroup $\langle\xi_r\rangle$ is normal in the whole automorphism group. Hence the orientation-preserving automorphism group is the quotient $\mathrm{PSL}_2^{(n)}(\R)$, the $n$-fold connected covering of $\mathrm{PSL}_2(\R)$; the full automorphism group $\mathrm{PGL}_2^{(n)}(\R)$ is obtained as semidirect product with the automorphism induced by a reflection.

\item When $r\notin\Z$, the normalizer of $\langle\xi_r\rangle$ is reduced to the inverse image of the orthogonal group, which has two components then $s$ is contained in the nontrivial component and normalizes $\langle\xi_r\rangle$, so in this case $N_\xi=M_\xi$. Thus the automorphism group is isomorphic to $(\R/\Z)\rtimes (\Z/2\Z)$ with action by sign, and the orientation-preserving automorphism group is isomorphic to $\R/\Z$.

\item For $\Xi_{n,\pm}$, the normalizer is the inverse image of the group of upper triangular matrices that are either scalar or trace-zero. In $\mathrm{PSL}_2(\R)$, the latter group has two connected components; the nontrivial component corresponding to trace zero matrices. A subset of representatives for $M_\xi$ modulo $M_\xi^0$ is given by $\{1,s\}$. Since $s\xi_{n,\pm}s^{-1}=\xi_{n,\pm}^{-1}$, we deduce that $s\in N_\xi$ and hence $N_\xi=M_\xi$.
thus the automorphism group is isomorphic to $(\R\rtimes\Z/n\Z)\rtimes (\Z/2\Z)$, with action by sign multiplication, and the orientation-preserving automorphism group is isomorphic to $\R\times\Z/n\Z$.

\item For $\Xi_{n,t}$ ($t>1$, $n\in\N_{>0}$) and $\xi=\xi_{n,t}$, the subgroup $B_\xi$ consists of monomial matrices in $\PGL_2(\R)$, which has 4 components. A subset of representatives of $B_\xi$ modulo $B_\xi^\circ$ is $\{1,s,w,sw\}$.

The element $w$, which belongs to $M_\xi$, does not normalize $\langle\xi\rangle$ (because $w^{-1}\xi_{n,t}w=\xi_{n,t^{-1}}\notin\langle\xi_{n,t}\rangle$). For the same reason, $s$ does not normalize. However, $q=sw$ normalizes (and does not centralize) $\langle\xi_{n,t}\rangle$. Note that $q^2=1$.

So the normalizer of $\langle\xi\rangle$ in $\widetilde{\mathrm{SL}}_2$, which is also its centralizer in $\widetilde{\mathrm{PGL}}_2$, is the direct product $(\xi_{0,\eta})_{\eta>0}\times (\xi_m)_{m\in\Z}$. Its quotient by $\langle\xi\rangle$ can be described as the direct product $(\xi_{0,\eta})_{\eta>0}\times (\xi_{m,t^{m/n}})_{m\in\Z/n\Z}$, which is isomorphic to $\R\times (\Z/n\Z)$. This is the orientation-preserving automorphism group of $\Xi_{n,t}$.

The normalizer in $\widetilde{\mathrm{PGL}}_2$ is the semidirect product $((\xi_{0,\eta})_{\eta>0}\times (\xi_m)_{m\in\Z})\rtimes\langle q\rangle$, where $q$ acts by sign. Its quotient by $\langle\xi\rangle$ can be described as the semidirect product $((\xi_{0,\eta})_{\eta>0}\times (\xi_{m,t^{m/n}})_{m\in\Z/n\Z})\rtimes\langle q\rangle$, which is isomorphic to $(\R\times (\Z/n\Z))\rtimes(\Z/2\Z)$, again with action by sign.
\end{itemize}

Let us summarize the classification up to isomorphism of complete finitely-charted projectively modeled curves (in each case the universal cover is $\Sigma_\infty$, which is the only infinitely-charted complete projectively modeled curve up to isomorphism):

\medskip\vspace{-0.2cm}
\begin{center}  \begin{tabular}{|c|c|c|}
  \hline
projectively modeled curve
  & $\Aut^+\simeq$ & $\Aut\simeq$\\
    \hline
special metaelliptic $\Omega_n$, $n\in\N_{>0}$ &  $\PSL_2^{(n)}(\R)$&  $\PGL_2^{(n)}(\R)$ \\
metahyperbolic $\Xi_{n,t}$, $t>1$, $n\in\N_{>0}$  & $\R\times\Z/n\Z$ & $(\R\times\Z/n\Z)\rtimes(\Z/2\Z)$\\
metaparabolic $\Xi_{n,\eps}$, $\eps\in\{+,-\}$, $n\in\N_{>0}$  & $\R\times\Z/n\Z$ & $(\R\times\Z/n\Z)\rtimes(\Z/2\Z)$\\
ordinary metaelliptic $\Omega_{r}$, $r>0,r\notin\Z$ & $\R/\Z$& $(\R/\Z)\rtimes(\Z/2\Z)$ \\
    \hline
   \end{tabular} \end{center}

Note that in all finitely-charted cases (non-complete and complete), the whole automorphism group is a semidirect product of its orientation-preserving normal subgroup by $\Z/2\Z$. This also holds for the infinitely-charted $\Sigma_\infty$, while $\Sigma_\infty^+$ has no orientation-reversing automorphism.

Finally, note that in the metaelliptic case, the automorphism group is transitive, while in the metaparabolic and metahyperbolic cases $\Xi_{n,t}$ and $\Xi_{n,\pm}$, the automorphism group has exactly 2 orbits, one of which being finite (of cardinal $n$ in the metaparabolic case, of cardinal $2n$ in the metahyperbolic case).

\begin{rem}
The non-complete structures $\Theta_t$ and $\Theta_1$ can be thought of as the $n=0$ case of $\Xi_{n,t}$ and $\Xi_{n,\pm}$ (note the collapse of the $\pm$ distinction when $n=0$, which is not always well-reflected in the literature). We thus have a canonical bijection, as observed  and used by Kuiper \cite{Ku2}, between the orbit space of $\widetilde{\PSL}_2\smallsetminus\{1\}$ modulo the conjugation action of $\widetilde{\PGL}_2$, and the set of projective structures on the circle, modulo diffeomorphism; in addition, it also corresponds to the set of projective structures modulo oriented diffeomorphism, because (unlike in the affine case) all such structures admit an orientation-reversing automorphism.
\end{rem}

Let us mention that Ghys \cite[\S 4.2]{Ghy} provides a direct argument of the following alternative: for every projectively modeled curve homeomorphic to the circle, either it is projectively isomorphic to a finite covering of the projective line, or its oriented automorphism group is abelian. 

We now use the classification to derive the following consequences. 
For the following one, it is possible that there is a more elegant, classification-free argument.

\begin{prop}\label{autisclosed}
For every projectively modeled curve $X$ (with finitely many components), the automorphism group $A$ of $X$ is closed in $\mathrm{Homeo}(X)$.
\end{prop}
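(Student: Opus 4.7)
The plan is to prove this directly, bypassing the classification, using that a M\"obius transformation is determined by its values at three distinct points.

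First reduce to the case $X$ connected. The subgroup $A_0\subset A$ stabilizing each connected component of $X$ has finite index; under the identification of the subset $H_0\subset\mathrm{Homeo}(X)$ of self-homeomorphisms preserving each component (which is closed, as components are clopen) with $\prod_i\mathrm{Homeo}(X_i)$, $A_0$ corresponds to $\prod_i\Aut(X_i)$. If each $\Aut(X_i)$ is closed in $\mathrm{Homeo}(X_i)$, then $A_0$ is closed, and $A$ follows as a finite union of $A_0$-cosets.

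So assume $X$ is connected, and let $f_n\in A$ converge to $f\in\mathrm{Homeo}(X)$ in the compact-open topology. Fix $x_0\in X$, choose projective charts $\phi\colon U\to V$ and $\psi\colon U'\to V'$ (into the model) with $x_0\in U$ and $f(x_0)\in U'$, and a compact neighborhood $K\subset U$ of $x_0$ with $f(K)\subset U'$. Uniform convergence on $K$ makes $f_n(K)\subset U'$ for $n$ large, and for such $n$ the composite $g_n:=\psi\circ f_n\circ\phi^{-1}$ (defined on $\phi(\mathrm{int}\,K)$) is the restriction of a M\"obius transformation $\tilde g_n\in\PGL_2(\R)$, since $f_n$ is projective and the charts belong to the projective atlas. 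Identify a small sub-interval of $\phi(\mathrm{int}\,K)$ with a subset of $\R\subset\mathbf{P}^1_\R$.

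Pick three distinct points $a, b, c$ in this sub-interval and set $g:=\psi\circ f\circ\phi^{-1}$. Since $f$ is a homeomorphism, $g$ is injective, so $g(a),g(b),g(c)$ are distinct. Pointwise convergence gives $\tilde g_n(a)\to g(a)$ and similarly at $b,c$. Because $\PGL_2(\R)$ acts simply transitively on ordered triples of distinct points of $\mathbf{P}^1_\R$, the evaluation $\tilde g\mapsto(\tilde g(a),\tilde g(b),\tilde g(c))$ is a homeomorphism from $\PGL_2(\R)$ onto the open subset of such triples. Hence $\tilde g_n$ converges in $\PGL_2(\R)$ to the unique $\tilde g$ with the correct values at $a,b,c$; this implies uniform convergence on compact subsets of $\mathbf{P}^1_\R$, whence $g=\tilde g$ on a neighborhood of $\phi(x_0)$ by uniqueness of pointwise limits. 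Thus $f$ is projective around $x_0$, and $x_0$ being arbitrary, $f\in A$.

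The only mildly delicate step is verifying in the local picture that $g_n$ extends to a M\"obius transformation of $\mathbf{P}^1_\R$, which is routine given the atlas-based definitions. The conceptual content is that three-point determination of M\"obius transformations turns pointwise limits of projective maps into projective maps, as long as the limit remains injective.
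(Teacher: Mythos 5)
Your proof is correct, and it takes a genuinely different route from the paper. The paper's own proof is a case-by-case verification running through the full classification of connected projectively modeled curves established in the appendix (treating $\Xi_n$ via the cross-ratio, the affine-type curves via a fixed finite set and properness, etc.); the paper even remarks beforehand that ``it is possible that there is a more elegant, classification-free argument,'' and yours is exactly that. Your local argument via sharp $3$-transitivity of $\PGL_2(\R)$ on $\mathbf{P}^1_\R$ buys independence from the classification (and would generalize, e.g.\ to the affine pseudogroup via sharp $2$-transitivity of $\Aff(\R)$ on $\R$, or to higher-dimensional projective structures using enough points in general position), whereas the paper's proof costs nothing extra once the appendix is in place. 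Two small points you should make explicit: choose the compact neighborhood $K$ to be an interval, so that $\phi(\mathrm{int}\,K)$ is connected and the locally projective map $\psi\circ f_n\circ\phi^{-1}$ is the restriction of a \emph{single} homography by the identity principle (two homographies agreeing on a nonempty open set coincide); and justify that the evaluation $\tilde g\mapsto(\tilde g(a),\tilde g(b),\tilde g(c))$ is a homeomorphism onto the set of distinct triples, e.g.\ because its inverse is given by an explicit matrix depending continuously (rationally) on the triple. Also note that since the curves in question are second countable, the compact-open topology on $\mathrm{Homeo}(X)$ is metrizable, so your sequential argument does establish closedness, and closedness in the compact-open topology implies closedness in any finer group topology one might prefer on $\mathrm{Homeo}(X)$ when $X$ is noncompact.
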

\begin{proof}
Since the componentwise preserving subgroup of $A$ is open in $A$, we can restrict to this one, and thus boil down to the case when $X$ is connected, and thus use the classification. Note that in each case, the description of the automorphism group as a Lie group makes it clear that it acts continuously on the given curve. Since in each case $A$ has finitely many components, it is enough to check that $A^\circ$ is closed.

When $A^\circ$ is compact, since it acts continuously it is closed. When $X$ is $\Sigma_\infty^+$, or $\Sigma_n$ for $n\in\N_{>0}$, the subgroup $A^\circ$ can be viewed as the subgroup of element pointwise stabilizing some closed discrete subset and acting as orientation-preserving affine transformation in each interval of its complement, and such that all these affine maps (on all intervals) are equal. This is easily checked to be a closed condition. 

For $\Sigma_\infty$ and $\Omega_n$ ($n\in\N_{>0}$), we start with $\Omega_1$: this is $\PGL_2$, and is the stabilizer of the cross-ratio, and thus is closed. For others, we start observing that the centralizer of the deck transformation is closed in the whole homeomorphism group, so it is enough to show that $A^\circ$ is closed in this centralizer, and then it consists of the preimage of $\PSL_2$ for the projection of this centralizer to $\mathrm{Homeo}^+(\mathbf{P}^1_\R)$. So it is closed.

The remaining cases are $\Xi_{n,t}$, $\Xi_{n,\varepsilon}$, and $\Sigma_{n-1/2}$. Then we observe that $A^\circ$ preserves a finite subset on which it acts trivially, and that it acts properly on its complement. This implies that it is closed.
\end{proof}

Let us now deal with maximal compact subgroups.
Let us recall a classical result of Mostow \cite{Mo} that in a virtually connected Lie group, all maximal compact subgroups are conjugate by some element of the unit component, and that moreover they have as many components as the whole group. We thus draw a table indicating, in the right column, one isomorphic copy in each case of a maximal compact subgroup (we write no quantifiers on the left column: the indices $n,t,\eps,r$ are meant to be the same as in the previous two tables). We then derive a conjugacy result, which is used in the paper.

\begin{center}  \begin{tabular}{|c|c|c|c|}
  \hline
proj. mod. curve
  & $\Aut\simeq$ & Maximal compact $\simeq$\\
    \hline
$\Sigma_\infty$ & $\PGL_2^{(\infty)}(\R)$ &  $\Z/2\Z$ \\
$\Sigma_\infty^+$, $\Sigma_n$ & $\R\rtimes\R^*$  &  $\Z/2\Z$ \\
 $\Sigma_{n-\frac12}$ &  $\R\rtimes(\Z/2\Z)$ &  $\Z/2\Z$ \\
$\Xi_{n,t}$, $\Xi_{n,\eps}$ & $(\R\times\Z/n\Z)\rtimes(\Z/2\Z)$ & $(\Z/n\Z)\rtimes(\Z/2\Z)$\\
$\Theta_{t\ge 1}$, $\Omega_r$, $r\notin\N$ & $(\R/\Z)\rtimes(\Z/2\Z)$ & $(\R/\Z)\rtimes(\Z/2\Z)$\\
$\Omega_n$ & $\PGL_2^{(n)}(\R)$ & $(\R/\Z)\rtimes(\Z/2\Z)$ \\
    \hline
   \end{tabular} \end{center}

As an immediate consequence of the classification and Mostow's conjugacy result, we derive:

\begin{prop}\label{conjuo2}
Let $A$ be the automorphism group of a connected projectively modeled curve. 
Then any two closed subgroups of $G$ that are topologically isomorphic to $\mathrm{SO}(2)$ are conjugate by some element of the identity component $A^\circ$. The same holds with $\mathrm{SO}(2)$ replaced with $\mathrm{O}(2)$.\qed
\end{prop}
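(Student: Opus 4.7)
The plan is to combine the explicit classification of automorphism groups of connected projectively modeled curves, as tabulated above, with Mostow's theorem (just quoted) on conjugacy of maximal compact subgroups in virtually connected Lie groups by elements of the identity component.

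First, I would observe that any closed subgroup $K$ of $A$ topologically isomorphic to $\mathrm{SO}(2)$ or $\mathrm{O}(2)$ is compact, hence contained in some maximal compact subgroup $M$ of $A$. Reading off the right-hand column of the last table, the maximal compact subgroups of $A$ are either finite (in the cases $\Sigma_\infty$, $\Sigma_\infty^+$, $\Sigma_n$, $\Sigma_{n-\frac12}$, $\Xi_{n,t}$, $\Xi_{n,\varepsilon}$) or isomorphic to $(\R/\Z)\rtimes(\Z/2\Z)\simeq\mathrm{O}(2)$ (in the cases $\Theta_t$, $\Theta_1$, $\Xi_r$, $\Xi_n$). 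Since $K$ has positive dimension, only the latter four cases can host such a $K$, and in them $M\simeq\mathrm{O}(2)$.

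Given two such closed subgroups $K_1,K_2$, choose maximal compacts $M_1\supset K_1$, $M_2\supset K_2$. In each of the four relevant cases $A$ has finitely many connected components (two in each case: either $(\R/\Z)\rtimes(\Z/2\Z)$ or $\mathrm{PGL}_2^{(n)}(\R)$), so Mostow's theorem applies and yields $g\in A^\circ$ with $gM_1g^{-1}=M_2$. Then $gK_1g^{-1}$ and $K_2$ are both closed subgroups of $M_2\simeq\mathrm{O}(2)$ isomorphic to the same group; but in $\mathrm{O}(2)$ the unique closed subgroup isomorphic to $\mathrm{O}(2)$ is the whole group, and the unique closed subgroup isomorphic to $\mathrm{SO}(2)$ is the identity component. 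In either case $gK_1g^{-1}=K_2$.

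There is no real obstacle here: the proposition is essentially a bookkeeping corollary of the appendix's classification combined with Mostow's theorem. The only point to double-check is the virtual connectedness of $A$, which is immediate from the tables in each case where a one-dimensional compact subgroup can arise.
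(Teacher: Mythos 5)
Your argument is correct and is essentially the paper's own proof: the proposition is stated there as an immediate consequence of the classification tables together with Mostow's conjugacy theorem for maximal compact subgroups, exactly as you carry out. The only detail worth making explicit is that containment of an arbitrary compact subgroup in a maximal compact one is also part of the Cartan--Mostow package for virtually connected Lie groups, which you use implicitly.
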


Namely, such copies of $\mathrm{SO}(2)$ exist in the case of $\Theta_t$ for $t\ge 1$ and $\Omega_r$ for $r\ge 0$, but not in the other cases, i.e., for metahyperbolic and metaparabolic curves $\Xi_{n,t}$, $\Xi_{n,\eps}$ (and for curves homeomorphic to an interval).

\end{document}